\documentclass[12pt,a4paper]{amsart}
\usepackage{algorithm}
\usepackage{algorithmic}
\usepackage{amssymb}
\usepackage{eucal}
\usepackage{graphicx}
\usepackage{amsmath}
\usepackage{amscd}
\usepackage[all]{xy}           
\usepackage{tikz}
\usepackage{tikz-qtree}
\usepackage{amsfonts,latexsym}
\usepackage{xspace}
\usepackage[T1]{fontenc}
\usepackage{tikz-cd}
\usepackage{txfonts}

\usepackage{epsfig}
\usepackage{float}
\usepackage{color}
\usepackage{fancybox}
\usepackage{colordvi}
\usepackage{multicol}
\usepackage{colordvi}
\ifpdf
\usepackage[colorlinks,final,backref=page,hyperindex]{hyperref}
\else
\usepackage[colorlinks,final,backref=page,hyperindex]{hyperref}
\fi
\usepackage[active]{srcltx} 
\usepackage{mathrsfs} 

\topmargin -1.8cm \textheight 25cm \oddsidemargin 0cm \evensidemargin -0cm \textwidth 16cm \topmargin -0.9cm

\topmargin -.8cm \textheight 22.8cm \oddsidemargin 0cm \evensidemargin -0cm \textwidth 16.3cm


\newtheorem{thm}{Theorem}[section]
\newtheorem*{theorem*}{Theorem}

\newtheorem{prop}[thm]{Proposition}
\newtheorem{lm}[thm]{Lemma}

\newtheorem{coro}[thm]{Corollary}

\makeatletter


\newcommand{\nc}{\newcommand}

\nc{\delete}[1]{{}}

    \nc{\mlabel}[1]{\label{#1}}  
    \nc{\mcite}[1]{\cite{#1}}  
    \nc{\mref}[1]{\ref{#1}}  
    \nc{\meqref}[1]{\eqref{#1}}  
    \nc{\mbibitem}[1]{\bibitem{#1}} 

\delete{
    \nc{\mlabel}[1]{\label{#1} {{\small\tt{{\ }\ (#1)}}}}                
    \nc{\mcite}[1]{\cite{#1}{{\small\tt{{\ }(#1)}}}}  
    \nc{\mref}[1]{\ref{#1}{{\small\tt{{\ }(#1)}}}}  
    \nc{\meqref}[1]{\eqref{#1}{{\small\tt{{\ }(#1)}}}}  
    \nc{\mbibitem}[1]{\bibitem[\bf #1]{#1}} 
}

\newcommand*\bigcdot{\mathpalette\bigcdot@{.5}}
\newcommand*\bigcdot@[2]{\mathbin{\vcenter{\hbox{\scalebox{#2}{$\m@th#1\bullet$}}}}}
\makeatother

\providecommand{\customgenericname}{}
\newcommand{\newcustomtheorem}[2]{%
    \newenvironment{#1}[1]
    {%
        \renewcommand\customgenericname{#2}%
        \renewcommand\theinnercustomgeneric{##1}%
        \innercustomgeneric
    }
    {\endinnercustomgeneric}
}
\newcustomtheorem{customthm}{Theorem}

\theoremstyle{definition}
\newtheorem{example}[thm]{Example}
\newtheorem{df}[thm]{Definition}
\newtheorem{remark}[thm]{Remark}

\nc{\name}[1]{{\bf #1}}

\allowdisplaybreaks

\newcommand{\N}{\mathbb{N}}
\newcommand{\Z}{\mathbb{Z}}
\newcommand{\Q}{\mathbb{Q}}

\nc{\Res}{\mathrm{Res}}

\def \ra {\rightarrow}

\def \C {\mathbb{C}}

\def\la{\lambda}

\def \al{\alpha}

\def \om{\omega}

\def \b {\beta}
\def \op {\oplus}

\def \ssq{\subseteq}

\def \vac {\mathbf{1}}

\def \g {\mathfrak{g}}
\def \h {\mathfrak{h}}

\def \Hom {\mathrm{Hom}}

\def \End {\mathrm{End}}

\def\Id{\mathrm{Id}}
\def \wt {\mathrm{wt}}

\def \bs {\backslash}

\def\RBO{\mathrm{RBO}}
\def\o{\otimes}
\def\Diff{\mathrm{Diff}}
\def\Der{\mathrm{Der}}
\def\Aut{\mathrm{Aut}}
\def\Y{\mathcal{Y}}

\def\<{\langle}
\def\>{\rangle}

\newcommand{\lir}[1]{\textcolor{red}{\underline{Li:}#1 }}

\newcommand{\cm}[1]{\textcolor{orange}{\underline{CM:}#1 }}
\newcommand{\jq}[1]{\textcolor{blue}{\underline{JQ:}#1 }}

\newcommand{\fusion}[3]{{\binom{#3}{#1\;#2}}}

\begin{document}

\title[On Rota-Baxter vertex operator algebras]{On Rota-Baxter vertex operator algebras}

\author{Chengming Bai}
\address{Chern Institute of Mathematics \& LPMC, Nankai University, Tianjin 300071, China}
\email{baicm@nankai.edu.cn}

\author{Li Guo}
\address{Department of Mathematics and Computer Science, Rutgers University, Newark, NJ 07102, USA}
\email{liguo@rutgers.edu}

\author{Jianqi Liu}
\address{Department of Mathematics, University of California, Santa Cruz, CA 95064, USA}
\email{jliu230@ucsc.edu}

\author{Xiaoyan Wang}
\address{Department of Mathematics and Computer Science, Rutgers University, Newark, NJ 07102, USA}
\email{xw350@newark.rutgers.edu}

\date{\today}

\begin{abstract}
Derivations play a fundamental role in the definition of vertex
(operator) algebras, sometimes regarded as a generalization of
differential commutative algebras. This paper studies the role
played by the integral counter part of the derivations, namely
Rota-Baxter operators, in vertex (operator) algebras. The closely
related notion of dendriform algebras is also defined for vertex
operator algebras. It is shown that the classical relations among
dendriform algebras, associative algebras and Rota-Baxter algebras are preserved for their vertex algebra analogs.
\end{abstract}

\subjclass[2010]{
    17B69,  
    17B38, 
    17B10, 
    81R10  
    17B68, 
    17B65,  
    81R12  
}

\keywords{Vertex algebra, vertex operator algebra, derivation,
Rota-Baxter algebra, field algebra, dendriform algebra}

\maketitle

\vspace{-1.2cm}

\tableofcontents

\vspace{-1.2cm}

\allowdisplaybreaks

\section{Introduction}
This paper studies Rota-Baxter operators on vertex operator algebras (VOAs), as an integral counterpart of the derivations which play an essential role in VOAs. The closely related dendriform algebras for VOAs are also studied.

\vspace{-.3cm}
\subsection{Vertex operator algebras and derivations}
Vertex algebras and VOAs, introduced by Borcherds in \mcite{Bor} and Frenkel, Lepowsky and Meurman in \mcite{FLM} respectively, were developed in conjunction with conformal field theory, ``monstrous moonshine'', string theory and infinite-dimensional Lie algebras, with applications in geometric Langlands program. See \mcite{FLM,Ka, FB-Z, FZ,TUY}.

On the other hand, the study of differential algebras, defined to
be an associative algebra equipped with a derivation, has its
origin in Ritt's algebraic approach to differential
equations~\mcite{Ri} and further developed by Kolchin~\mcite{Ko}.
Since then, the subject has evolved into a vast area involving
differential aspects of Galois theory, algebraic geometry,
computational algebra and logic (see for example~\mcite{PS}). More
generally, differential algebras with weights were introduced as
an abstraction of the differential quotients whose limits define
the derivation in analysis~\mcite{GK}.


A vertex algebra in general can be viewed as a
generalization of a commutative differential algebra. In fact, any
commutative differential unital algebra $(A,D,1)$ is naturally a
vertex algebra (cf. \mcite{Bor}). More generally, the notion of vertex
algebras, as well
as the related notion of vertex algebras without vacuum (cf. \mcite{HL}), can be equivalently formulated
in terms of a ``weakly commutative vertex operator'' equipped with a special
derivation. See~\mcite{Ka,HL,LL,LTW} and Theorem~\mref{thm2.3}
and Proposition~\mref{prop2.6}. On the other hand, the study of derivations of vertex (operator) algebras has drawn much attention. The derivations of VOAs naturally give rise to automorphisms of VOAs (cf. \mcite{FHL,DG1}), which is an important notion that relates to many subbranches like the moonshine conjecture, orbifold theory, and quantum Galois theory, etc., see \mcite{FLM,DRX,DM}. Huang observed in \mcite{H1} that the set of derivations of a VOA with coefficients in an ordinary module can be identified with the first-order cohomology of VOAs, similar to the case of Lie algebras and associative algebras. The structure of the derivation algebras of strongly rational VOAs was determined by Dong and Griess in \mcite{DG1}. They also proved in \mcite{DG1} that the derivation algebra of a strongly rational VOA $V$ generates the connected component of the automorphism group of $V$. 


\vspace{-.2cm}
\subsection{Vertex operator algebras and Rota-Baxter operators}
Given the close relationship between derivations and vertex (operator)
algebras, it is natural to study the role played by integrations
in vertex (operator) algebras. The integral counterpart of the differential
algebra (with a weight) is the Rota-Baxter algebra, whose study originated from the 1960 work of
G.~Baxter~\mcite{Ba} in probability and can be tracked back
further under the disguise of a linear
transformation~\mcite{Co,Tri}. For Lie algebras, they were
rediscovered in the 1980s as the operator forms of the classical
Yang-Baxter equation (CYBE) and modified Yang-Baxter
equation. To further develop the operator forms of the
CYBE~\mcite{S}, the more general notion of a relative Rota-Baxter
operator was introduced by Kupershmidt who called it an
$\mathcal{O}$-operator.~\mcite{Ku}

Rota-Baxter operators have been defined on a very wide range of algebraic structures, and indeed on algebraic operads~\mcite{PBG}. Given the importance of VOAs and Rota-Baxter algebras, it is a natural question to investigate the possibility of combining the theories of Rota-Baxter operators and VOAs. However, due to the complexity of VOAs, this has not been carried out. 


The purpose of this paper is to define, on VOAs, the closely related notions of differential operators, Rota-Baxter operators and dendriform algebras. As it turns out, in
a special case, the axiom of Rota-Baxter operators on VOAs coincides with X.~Xu's definition of $R$-matrices for VOAs~\mcite{X}, as a vertex operator algebra analog of the $r$-matrices
for Lie algebras as solutions to the CYBE. In a separate work
~\mcite{BGL}, we introduced the tensor form of the CYBE for VOAs.

A closely related notion to the (relative) Rota-Baxter associative
algebra is the dendriform algebra, introduced by Loday from the
periodicity of $K$-theory and plays  an essential
role of splitting of the associativity. In fact, the (tri)dendriform algebra can be
naturally derived from Rota-Baxter algebra. It can further be
characterized by (relative) Rota-Baxter algebras
in terms of representations (see~\cite{BGN3} for example). Since a
vertex algebra can be viewed as a generalization of both the commutative differential algebra and the Lie algebra, it is natural to investigate the analogs of these
structures and their relations with (relative) Rota-Baxter
operators in the context of vertex (operator) algebras.

\delete{\cm{In fact, the main content of this paper is not related to relative RBOs, and hence I suggest that maybe we will not mention such a notion in Introduction and the related parts are revised accordingly.}

\lir{It is okay to mention it in the introduction. But how is this analog studied? There were some discussions on this as a key justification of the nation of vertex dendriform algebras. It should be emphasized or clarified. }
\jq{I'm not sure, professors, but we did use the notion of relative RBO for VOAs, see Corollary~\mref{co:denrb}. }}

\vspace{-.3cm}
\subsection{Layout of the paper}
In Section~\mref{sec:diff}, we first recall the basic notions of
vertex (operator) algebras with emphasis on their derivational
characterizations. We then introduce a notion of $\la$-derivations
on VOAs, as a preparation for the next section. We will prove that the set of $1$-differentials on a
simple VOA $V$ can be identified with the automorphism group of
$V$.

In Section~\mref{sec:rbo}, we define Rota-Baxter operators (RBOs) on VOAs, provide examples and study their relation with
differential operators. To allow more flexibility for examples and
applications, we also introduce variations of RBOs such as weak local 
RBOs, weak global RBOs, and ordinary local RBOs. Homogeneous RBOs on certain CFT-type VOAs
are classified. Xu's theorem on $R$-matrix and modified Yang-Baxter equations for VOAs \mcite{X} was reinterpreted as a special case of a general theorem in Rota-Baxter vertex algebras (Theorem~\mref{thm2.18}).




Section~\mref{sec:dend} is devoted to extending the notion of dendriform
algebras to VOAs. We define dendriform field and vertex (Leibniz)
algebras and show that they fulfill the splitting property of the
operations in VOAs (Theorem~\mref{thm4.5}). We also obtained some
new kinds of Jacobi identities from our definition of the
dendriform vertex algebras (Theorem~\mref{thm4.10}). Furthermore,
the characterization of dendriform algebras in terms of module structures and relative
Rota-Baxter operators is transported to VOAs
(see Proposition~\mref{prop4.15},
Corollary~\mref{co:denrb} and Proposition~\mref{prop4.19}),
further justify our notion of dendriform vertex algebras.


\smallskip
\noindent
{\bf Conventions.} Throughout this paper, all vector spaces are defined over $\C$. $\N$ denotes the set of nonnegative integers, $\Q$ denotes the set of rational numbers. 

\vspace{-.2cm}
\section{Differential operators on VOAs}
\mlabel{sec:diff}
In this section, we first recall the notions of vertex (operator) algebras and some related definitions. Then we introduce the notion of $\la$-differential operators on VOAs and discuss some of its properties. The $\la$-differential operators are closely related to the Rota-Baxter operators to be studied in the next section.
\vspace{-.2cm}
\subsection{The definition of vertex (operator) algebras}
We briefly recall the background on VOAs that will be needed in the sequel and refer the readers to~\mcite{Bor,FHL,FLM,Hu97,Ka,LL} for further details.

\begin{df}\mlabel{df:va}
A \name{vertex algebra} is a triple $(V, Y, {\bf 1} )$ consisting of a vector space $V$,
a linear map $Y$ called the \name{vertex operator} or the state-field correspondence:
        \begin{align*}
Y:& V \to (\mbox{End}\,V)[[z,z^{-1}]] , \quad
 a\mapsto Y(a,z)=\sum_{n\in{\Z}}a_nz^{-n-1}\ \ \ \  (a_n  \in
\mbox{End}\,V),\nonumber
    \end{align*}
and a distinguished element ${\bf 1} \in V$ called the \name{vacuum vector}, satisfying the following conditions:
        \begin{enumerate}
\item  (Truncation property) For any given $a,b\in V$, $ a_nb=0$ when $n$ is sufficiently large. 
\mlabel{it:trunc}
\item  (Vacuum property) $Y(\vac,z)=\Id_{V}$.
\mlabel{it:vac}
\item  (Creation property) For $a\in V$, we have $Y(a,z){\bf 1}\in V[[z]]$ and $ \lim\limits_{z\to 0}Y(a,z){\bf 1}=a$.
\mlabel{it:crea}
\item (The Jacobi identity) For $a,b\in V$,
        \begin{align*}& \displaystyle{z^{-1}_0\delta\left(\frac{z_1-z_2}{z_0}\right)
        Y(a,z_1)Y(b,z_2)-z^{-1}_0\delta\left(\frac{-z_2+z_1}{z_0}\right)
        Y(b,z_2)Y(a,z_1)}
    \displaystyle{=z_2^{-1}\delta
        \left(\frac{z_1-z_0}{z_2}\right)
        Y(Y(a,z_0)b,z_2)}.
\end{align*}
Here $\delta(x):=\sum_{n\in \Z} x^n$ is the formal delta function.
\mlabel{it:jac}
\end{enumerate}
    \end{df}

The following equivalent characterization of the Jacobi identity is useful in our later discussions (cf. \mcite{LL}; see also \mcite{FHL}).
    \begin{thm}\mlabel{thm2.2}
A vertex algebra $(V,Y,\vac)$ satisfies the following properties.
\begin{enumerate}
\item $($weak commutativity$)$ For $a,b\in V$, there exists $k\in \N$ such that
\begin{equation}\mlabel{2.1}
                (z_{1}-z_{2})^{k} Y(a,z_{1})Y(b,z_{2})=(z_{1}-z_{2})^{k} Y(b,z_{2})Y(a,z_{1}).
            \end{equation}
\item $($weak associativity$)$ For $a,b,c\in V$, there exists $k\in \N$ $($depending on $a$ and $c$$)$ such that
            \begin{equation}\mlabel{2.2}
                (z_{0}+z_{2})^{k} Y(Y(a,z_{0})b,z_{2})c=(z_{0}+z_{2})^{k} Y(a,z_{0}+z_{2})Y(b,z_{2})c.
            \end{equation}
        \end{enumerate}
        Moreover, if $Y:V\ra (\End V)[[z,z^{-1}]] $ is a linear map that satisfies the truncation property, then the Jacobi identity of $Y$ in the definition of a vertex algebra is equivalent to the weak commutativity together with the weak associativity.
    \end{thm}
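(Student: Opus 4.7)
The plan is to prove the two directions in turn; both rest on standard formal-residue manipulations with delta functions, with the truncation property forcing unwanted monomials to disappear.

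\emph{Jacobi $\Rightarrow$ weak commutativity.} Fix $a,b \in V$ and, via truncation, pick $k \in \N$ with $a_n b = 0$ for $n \geq k$. Apply $\Res_{z_0}$ to $z_0^k$ times the Jacobi identity. On the left,
\[\Res_{z_0}\, z_0^{k-1}\delta\!\left(\tfrac{z_1-z_2}{z_0}\right) = (z_1-z_2)^k = \Res_{z_0}\, z_0^{k-1}\delta\!\left(\tfrac{-z_2+z_1}{z_0}\right)\]
as polynomials, so the left-hand side collapses to $(z_1-z_2)^k[Y(a,z_1), Y(b,z_2)]$. On the right, $z_0^k Y(Y(a,z_0)b, z_2)$ is polynomial in $z_0$ by the choice of $k$, and $z_2^{-1}\delta((z_1-z_0)/z_2)$ expands only in non-negative powers of $z_0$, so the product contributes nothing to the $z_0^{-1}$ coefficient. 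This gives \meqref{2.1}.

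\emph{Jacobi $\Rightarrow$ weak associativity.} Apply the Jacobi identity to $c \in V$ and invoke the delta identities $z_0^{-1}\delta((z_1-z_2)/z_0) = z_1^{-1}\delta((z_0+z_2)/z_1) = z_2^{-1}\delta((z_1-z_0)/z_2)$ (with matched expansion conventions). Multiply by $z_1^k$ with $k$ large enough by truncation of $Y(a,z_1)c$, and take $\Res_{z_1}$. The commutator term containing $z_0^{-1}\delta((-z_2+z_1)/z_0) Y(b,z_2)Y(a,z_1)c$ vanishes: the delta factor lies in non-negative $z_1$ powers and $z_1^k Y(a,z_1)c \in V[[z_1]]$. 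The substitution property $z_1^{-1}\delta((z_0+z_2)/z_1) f(z_1) = z_1^{-1}\delta((z_0+z_2)/z_1) f(z_0+z_2)$ then converts the surviving terms into $(z_0+z_2)^k Y(a,z_0+z_2)Y(b,z_2)c$ on the left and $(z_0+z_2)^k Y(Y(a,z_0)b,z_2)c$ on the right, which is \meqref{2.2}.

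For the \emph{converse}, assume truncation together with weak commutativity and weak associativity. The plan is to invoke a delta-function reconstruction lemma (see \mcite{LL}): a formal distribution supported on $\{z_1 = z_0 + z_2\}$ in three formal variables is determined by its polynomial multiples $(z_1-z_2)^k F$ and $(z_0+z_2)^k F$ for sufficiently large $k$. Weak commutativity supplies the first kind of data for the LHS commutator of Jacobi, while weak associativity supplies the second for the RHS iterate; one then checks that the two pieces of information fit together to reproduce the Jacobi identity.

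The \textbf{main obstacle} is the converse: one must coordinate a single $k$ large enough for all the relevant truncation bounds simultaneously (for $Y(a,z)b$, $Y(a,z)c$, and the iterate $Y(Y(a,z_0)b,z_2)c$) and execute the delta-function reconstruction cleanly. I would follow the treatment in \mcite{LL} essentially verbatim for this part.
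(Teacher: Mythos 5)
The paper does not actually prove Theorem~\mref{thm2.2}: it is recalled from \mcite{LL} (see also \mcite{FHL}), and your outline is the standard argument from those references, so in substance you are following the same (cited) route: taking $\Res_{z_0}$ of $z_0^k$ times the Jacobi identity with $k$ chosen by truncation of $Y(a,z)b$ gives \meqref{2.1}, taking $\Res_{z_1}$ of $z_1^k$ times the Jacobi identity applied to $c$ with $k$ chosen by truncation of $Y(a,z)c$ gives \meqref{2.2}, and the converse is a delta-function reconstruction. Both forward sketches are correct in outline (minor quibble: $z_0^kY(Y(a,z_0)b,z_2)$ is not a polynomial in $z_0$ but a series with only nonnegative powers of $z_0$, which is all you need).

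Two caveats. First, the displayed chain $z_0^{-1}\delta\big(\tfrac{z_1-z_2}{z_0}\big)=z_1^{-1}\delta\big(\tfrac{z_0+z_2}{z_1}\big)=z_2^{-1}\delta\big(\tfrac{z_1-z_0}{z_2}\big)$ is not correct as a chain: the first equality holds when $(z_0+z_2)^n$ is expanded in nonnegative powers of $z_2$, the second when it is expanded in nonnegative powers of $z_0$, and under any single convention the chain would force $z_0^{-1}\delta\big(\tfrac{z_1-z_2}{z_0}\big)=z_2^{-1}\delta\big(\tfrac{z_1-z_0}{z_2}\big)$, contradicting the three-term identity (their difference is $z_0^{-1}\delta\big(\tfrac{-z_2+z_1}{z_0}\big)\neq 0$). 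The manipulations you then carry out only use the two correct two-term identities, so this is a presentation error, but it must be fixed. Second, your converse is only a plan deferred to \mcite{LL}, so as a self-contained proof that half is missing (the paper, to be fair, defers in the same way). Note that Lemma~\mref{lm4.9} of this very paper (Li's lemma from \mcite{L4}) is exactly the reconstruction statement you describe: applying it with $A(z_1,z_2)=Y(a,z_1)Y(b,z_2)c\in V((z_1))((z_2))$, $B(z_1,z_2)=Y(b,z_2)Y(a,z_1)c\in V((z_2))((z_1))$ and $C(z_0,z_2)=Y(Y(a,z_0)b,z_2)c\in V((z_2))((z_0))$ (these memberships are precisely where the truncation property enters) yields both directions at once, since \meqref{4.50} is weak commutativity and \meqref{4.51} is weak associativity; citing or reproducing that lemma would make your converse complete and would also subsume the two forward computations.
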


    Let $(V,Y,\vac)$ be a vertex algebra. Define a translation operator $D :V\ra V$ by letting
    $D a:=a_{-2}\vac,$
    for all $a\in V$. Then $(V,Y,D,\vac)$ satisfies the \name{$D $-derivative property}:
    \begin{equation}\mlabel{2.3}
        Y(D a,z)=\frac{d}{dz}Y(a,z),
    \end{equation}
    the \name{$D $-bracket derivative property}:
    \vspace{-.2cm}
    \begin{equation}\mlabel{2.4}
        [D , Y(a,z)]=\frac{d}{dz}Y(a,z),
    \end{equation}
    and the \name{skew-symmetry} formula:
    \vspace{-.2cm}
    \begin{equation}\mlabel{2.5}
        Y(a,z)b=e^{z D} Y(b,-z)a, \quad  a, b\in V.
    \end{equation}
Equalities \meqref{2.3} and \meqref{2.4} together are called the \name{$D$-translation invariance property}.
On the other hand, a vertex algebra also has the following equivalent condition (cf. \mcite{LL}).
\begin{thm}\mlabel{thm2.3}
Consider a triple $(V,Y,\vac)$ consisting of a vector space $V$, a
linear map $Y:V\to (\End V)[[z,z^{-1}]]$ and a distinguished
vector $\vac$. The triple is a vertex algebra if and
only if it satisfies the truncation, vacuum and creation
properties \meqref{it:trunc}--\meqref{it:crea}, the weak
commutativity \meqref{2.1}, and allows a derivation $D$ that
satisfies the $D$-bracket derivative property \meqref{2.4}.
\end{thm}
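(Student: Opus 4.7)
The forward direction essentially reduces to results already in the excerpt: Theorem~\mref{thm2.2} extracts weak commutativity from the Jacobi identity, and the translation operator $D$ defined by $Da := a_{-2}\vac$ satisfies the $D$-bracket derivative property \meqref{2.4} (together with $D\vac = \vac_{-2}\vac = 0$, since $Y(\vac,z) = \Id_V$ forces $\vac_n = 0$ for $n \ne -1$). I would devote the bulk of the proof to the nontrivial converse.

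For the converse, assuming truncation, vacuum, creation, weak commutativity, and the $D$-bracket derivative property (plus $D\vac = 0$, which I would state explicitly as part of what ``derivation'' means here), my plan is to derive weak associativity and then invoke Theorem~\mref{thm2.2}. The first ingredient is \emph{translation covariance}
\[
e^{z_0 D}\,Y(a,z_2)\,e^{-z_0 D} \;=\; Y(a,z_0+z_2),
\]
obtained by iterating $[D,Y(a,z)] = \tfrac{d}{dz}Y(a,z)$ and recognizing both sides as formal-series solutions of the same first-order differential equation in $z_0$. Together with the creation property this yields $Y(a,z)\vac = e^{zD}a$.

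The core step is promoting weak commutativity to weak associativity. I would first dispatch the case $c = \vac$: apply weak commutativity to $\vac$, rewrite $Y(b,z_2)\vac = e^{z_2 D}b$, push $e^{z_2 D}$ past $Y(a,z_1)$ using translation covariance, and then formally substitute $z_1 = z_0 + z_2$ (in the binomial-expansion sense) to obtain
\[
(z_0+z_2)^{k}\,Y(a,z_0+z_2)Y(b,z_2)\vac \;=\; (z_0+z_2)^{k}\,Y(Y(a,z_0)b,z_2)\vac.
\]
To pass from $c=\vac$ to a general $c \in V$, the standard device is to replace the final $\vac$ by $Y(c,z_3)\vac$, use weak commutativity for the pairs $(a,c)$ and $(b,c)$ with a common large exponent to move $Y(c,z_3)$ across $Y(a,z_1)$ and $Y(b,z_2)$, apply the vacuum case, and then take $z_3 \to 0$.

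The main obstacle I anticipate is precisely this last promotion step: several formal-series domains (e.g.\ $V((z_1))((z_2))$ versus $V((z_0,z_2))((z_3))$) must be reconciled, and all the relevant instances of weak commutativity must be invoked with a uniform exponent so that the substitution $z_1 \mapsto z_0 + z_2$ and the subsequent $z_3 \to 0$ extraction are both well-defined. Once weak associativity is in hand, Theorem~\mref{thm2.2} assembles it together with weak commutativity into the full Jacobi identity, completing the proof.
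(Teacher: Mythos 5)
The paper itself gives no proof of Theorem~\mref{thm2.3} (it is quoted from \mcite{LL}), so your proposal has to be measured against the standard argument there. Your forward direction and the preliminaries of the converse are fine: translation covariance, $D\vac=0$, $Y(a,z)\vac=e^{zD}a$, and the vacuum case of weak associativity (which in fact needs no weak commutativity at all, since both $Y(a,z_0+z_2)Y(b,z_2)\vac$ and $Y(Y(a,z_0)b,z_2)\vac$ equal $e^{z_2D}Y(a,z_0)b$). The genuine gap is exactly the ``promotion'' step you flag but do not resolve. Inserting $Y(c,z_3)\vac$ and letting $z_3\to 0$ forces you, at the end, to move $Y(c,z_3)$ back across $Y(Y(a,z_0)b,z_2)$, i.e.\ to invoke weak commutativity between $c$ and the entire infinite family $\{a_nb\}_{n<0}$ with a single uniform exponent. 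That uniform locality is not among the hypotheses and is essentially a Dong's-lemma-type statement, a piece of what is being proved; without it the $z_3\to0$ extraction is also obstructed, because $Y(c,z_3)Y(Y(a,z_0)b,z_2)\vac$ may contain negative powers of $z_3$ that recombine with your polynomial multipliers $(z_1-z_3)^N(z_2-z_3)^N$ to contaminate the $z_3^0$-coefficient. So the plan, as outlined, does not close.

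The missing idea is skew-symmetry \meqref{2.5}, which is available from your own hypotheses: applying weak commutativity \meqref{2.1} to the vacuum (the very computation you begin in your ``$c=\vac$'' step) and conjugating $e^{z_2D}$, resp.\ $e^{z_1D}$, across the vertex operators gives, for $k$ large, an identity in $V[[z_1,z_2]]$ whose specialization $z_2=0$ yields $z_1^kY(a,z_1)b=z_1^ke^{z_1D}Y(b,-z_1)a$, hence \meqref{2.5}. With skew-symmetry in hand the promotion needs no third variable and no limit: if $k$ is a weak-commutativity exponent for the pair $(a,c)$, then
\begin{align*}
(z_0+z_2)^kY(a,z_0+z_2)Y(b,z_2)c
&=(z_0+z_2)^kY(a,z_0+z_2)e^{z_2D}Y(c,-z_2)b
=(z_0+z_2)^ke^{z_2D}Y(a,z_0)Y(c,-z_2)b\\
&=(z_0+z_2)^ke^{z_2D}Y(c,-z_2)Y(a,z_0)b
=(z_0+z_2)^kY(Y(a,z_0)b,z_2)c,
\end{align*}
which is weak associativity \meqref{2.2} with $k$ depending only on $a$ and $c$; Theorem~\mref{thm2.2} then assembles the Jacobi identity. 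This is the route of \mcite{LL}, and it is the exact dual of Proposition~\mref{prop2.6}. Your architecture is right up to the crux, but the crux requires skew-symmetry (or an equivalent uniform-locality input), and your proposal does not supply it.
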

Our later discussion needs the following weaker notions of vertex algebras introduced in \mcite{LTW}.

\begin{df}\mlabel{df2.10}
A \name{vertex Leibniz algebra} $(V,Y)$ is a vector space $V$ equipped with a linear map $Y: V\ra (\End V)[[z,z^{-1}]]$, satisfying the truncation property and the Jacobi identity.
\end{df}

As a special case, a subspace $U$ of a vertex algebra $(V,Y,\vac)$ is a vertex Leibniz subalgebra with respect to the restricted vertex operator $Y|_{U}$ if it satisfies $a_{n}b\in U$, for all $a,b\in U$, and $n\in \Z$. A related notion is the vertex algebra without vacuum (see \mcite{HL}):

\begin{df}\mlabel{df2.5}
A \name{vertex algebra without vacuum} is a vector space $V$, equipped with a linear map $Y:V\ra (\End V)[[z,z^{-1}]]$ and a linear operator $D:V\ra V$ satisfying the truncation property, the Jacobi identity, the $D$-derivative property \meqref{2.3}, and the skew-symmetry \meqref{2.5}. We denote a vertex algebra without vacuum by $(V,Y,D)$.
    \end{df}

The following fact is proved by Huang and Lepowsky in \mcite{HL}; see also \mcite{LTW}:
\begin{prop}\mlabel{prop2.6}
Let $V$ be a vector space, equipped with a linear map  $Y: V\ra (\End V)[[z,z^{-1}]]$, satisfying the truncation property. If $D:V\ra V$ is another linear map that satisfies the $D$-bracket derivative property \meqref{2.4} and skew-symmetry \meqref{2.5}, then the weak commutativity \meqref{2.1} of $Y$ follows from the weak associativity \meqref{2.2}.
\end{prop}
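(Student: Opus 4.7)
The plan is: starting from $Y(a,z_1)Y(b,z_2)c$ for an arbitrary $c\in V$, first use skew-symmetry on $Y(b,z_2)c$, then slide the resulting translation factor to the left via a conjugation formula coming from \meqref{2.4}, apply weak associativity to the triple that results, and finally invoke skew-symmetry one more time to recover $Y(b,z_2)Y(a,z_1)c$. After multiplying by $(z_1-z_2)^k$ for an appropriate $k$, this chain of identities will be precisely weak commutativity.

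The key preliminary is the conjugation formula $e^{wD}Y(a,z)e^{-wD}=Y(a,z+w)$ in $(\End V)[[z,z^{-1}]][[w]]$, which is equivalent to \meqref{2.4}: both sides have the same $w$-derivative $\mathrm{ad}_D(\cdot)=\partial_z(\cdot)$ and agree at $w=0$. Combining this with the skew-symmetry \meqref{2.5} one obtains
\[
Y(a,z_1)Y(b,z_2)c \;=\; Y(a,z_1)\,e^{z_2 D}Y(c,-z_2)b \;=\; e^{z_2 D}\,Y(a,z_1-z_2)Y(c,-z_2)b.
\]

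Next I would apply the weak associativity identity \meqref{2.2}, with its formal variables renamed $(x_0,x_2)$, to the triple $(a,c,b)$ and then specialize $(x_0,x_2)\mapsto (z_1,-z_2)$. This yields some $k\in\N$, depending only on $a$ and $b$ (the outer vectors in this application), such that
\[
(z_1-z_2)^k Y(a,z_1-z_2)Y(c,-z_2)b \;=\; (z_1-z_2)^k Y(Y(a,z_1)c,-z_2)b.
\]
One more application of \meqref{2.5} gives $Y(Y(a,z_1)c,-z_2)b=e^{-z_2 D}Y(b,z_2)Y(a,z_1)c$, and multiplying by $e^{z_2 D}$ collapses everything to
\[
(z_1-z_2)^k Y(a,z_1)Y(b,z_2)c \;=\; (z_1-z_2)^k Y(b,z_2)Y(a,z_1)c,
\]
which is \meqref{2.1} evaluated on $c$. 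Since $k$ depends only on $a$ and $b$, this promotes to an operator identity, giving weak commutativity.

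The main obstacle is formal rather than conceptual. The substitution $(x_0,x_2)\mapsto(z_1,-z_2)$ in weak associativity has to be justified: after multiplication by $(x_0+x_2)^k$, only finitely many negative powers of $x_2$ remain when both sides act on $b$, so negating $x_2$ is a legitimate operation in the underlying space of formal distributions. One must also check that the compositions $Y(a,z_1)e^{z_2 D}$ and $e^{\pm z_2 D}Y(\cdots)$ are well-defined formal power series in $z_2$ with coefficients in $(\End V)[[z_1,z_1^{-1}]]$, which follows from the truncation property applied to the vector on which they act.
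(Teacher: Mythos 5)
Your proof is correct and is essentially the argument behind the result as the paper uses it: the paper does not prove Proposition~\mref{prop2.6} itself but quotes it from \mcite{HL} (see also \mcite{LTW}), and the proof there is exactly your chain --- the conjugation formula $e^{wD}Y(a,z)e^{-wD}=Y(a,z+w)$ derived from \meqref{2.4}, skew-symmetry to write $Y(a,z_1)Y(b,z_2)c=e^{z_2D}Y(a,z_1-z_2)Y(c,-z_2)b$, weak associativity applied to the triple $(a,c,b)$ (so that $k$ depends only on the outer vectors $a$ and $b$) with $x_2\mapsto -z_2$, and one more use of skew-symmetry to collapse to $(z_1-z_2)^kY(b,z_2)Y(a,z_1)c$. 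One minor remark: the substitution $x_2\mapsto -z_2$ needs no finiteness caveat, since negating a single formal variable is always well defined coefficientwise; the kind of care you describe is required only for substitutions that mix variables, so your worry there is harmless but unnecessary, while the point that does deserve the attention you give it --- that the binomial expansion convention in $Y(a,x_0+x_2)$ (nonnegative powers of $x_2$) matches the expansion produced by the conjugation formula --- is handled correctly.
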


\begin{df}\mlabel{df2.2}
A \name{vertex operator algebra (VOA)} is a quadruple $(V, Y, \vac, \omega)$, where
\begin{itemize}
\item $(V,Y(\cdot, z), \vac)$ is a $\Z$-graded vertex algebra: $V=\bigoplus_{n \in \Z}V_n,$
        such that ${\bf 1} \in V_0$, $\text{dim}V_n < \infty$ for each $n\in \Z$, and $V_n=0$ for $n$ sufficiently small,
\item ${\omega} \in V_2$ is another distinguished element, called the \name{Virasoro element} and for which we denote $Y(\omega,z)=\sum_{n\in\Z}L(n)z^{-n-2}$, so $L(n)=\omega_{n+1}$, $n\in \Z$. It satisfies the following additional conditions.
\end{itemize}
\begin{enumerate}
\item[(5)] (The Virasoro relation)
$[L(m),L(n)]=(m-n)L(m+n)+\frac{1}{12}(m^3-m)\delta_{m+n,0}c,$
where $c\in \C$ is called the central charge (or rank) of $V$.
\item[(6)] ($L(-1)$-derivation property) $D=L(-1)$, and  $$\frac{d}{dz}Y(v,z)=Y(L(-1)v,z)=[L(-1),Y(v,z)].$$
\item[(7)] ($L(0)$-eigenspace property) $L(0)v=nv$, for all $v\in V_{n}$ and $n\in \Z$.
\end{enumerate}
A VOA $V$ is said to be of the \name{CFT-type}, if $V=V_0\op V_+$, where $V_0=\C\vac$ and $V_+=\bigoplus_{n=1}^\infty V_n$.
\end{df}

We also need the notion of modules over VOAs (cf. \mcite{FHL,FLM,LL}).
\begin{df}\mlabel{df2.13}
Let $(V,Y,\vac,\om)$ be a VOA, a \name{weak $V$-module} $(W,Y_{W})$ is a vector space $W$ equipped with a linear map
\vspace{-.2cm}
$$
Y_{W}:V\ra (\End W)[[z,z^{-1}]],\quad
    a\mapsto Y_{W}(a,z)=\sum_{n\in \Z} a_{n} z^{-n-1},
\vspace{-.2cm}
$$
satisfying the following axioms.
\begin{enumerate}
    \item (Truncation property) For $a\in V$ and $v\in W$, $a_{n}v=0$ for $n$ sufficiently large. 
            \item (Vacuum property) $Y_{W}(\vac,z)=\Id_{W}$.
            \item (The Jacobi identity) For $a,b\in V$, and $u\in W$
            \begin{align*}
                &z_{0}^{-1}\delta\left(\frac{z_{1}-z_{2}}{z_{0}}\right) Y_{W}(a,z_{1})Y_{W}(b,z_{2})u-z_{0}^{-1}\delta\left(\frac{-z_{2}+z_{1}}{z_{0}}\right)Y_{W}(b,z_{2})Y_{W}(a,z_{1})u\\
                &=z_{2}^{-1}\delta \left(\frac{z_{1}-z_{0}}{z_{2}}\right) Y_{W}(Y(a,z_{0})b,z_{2})u.
            \end{align*}
        \end{enumerate}

A weak $V$-module $W$ is called \name{admissible (or $\N$-gradable)} if $W=\bigoplus_{n\in \N}W(n)$, with $\dim W(n)<\infty$ for each $n\in \N$, and $a_{m}W(n)\subset W(\wt(a)-m-1+n)$ for homogeneous $a\in V$, $m\in \Z$, and $n\in \N$.

An \name{ordinary $V$-module} is an admissible $V$-module $W$ such that each $W(n)$ is an eigenspace of the operator $L(0)=\Res z Y_W(\om ,z)$ of eigenvalue $\la+n$, where $\la\in \Q$ is a fixed number called the conformal weight of $W$. 
    \end{df}
Let $(W,Y_{W})$ be a weak module over a VOA $V$. Write $Y_{W}(\om ,z)=\sum_{n\in \Z} L(n)z^{-n-2}$. It is proved in \mcite{DLM1} that $Y_{W}$ also satisfies the $L(-1)$-derivative property (in Definition~\mref{df2.2}) and the $L(-1)$-bracket derivative property in~\meqref{2.4}:
$Y_{W}(L(-1)a,z)=\frac{d}{dz}Y_{W}(a,z)=[L(-1),Y_{W}(a,z)].$

A VOA $(V,Y,\vac,\om)$ is obviously an ordinary module over $V$ itself, with $Y_W=Y$. $(V,Y)$ is called the adjoint $V$-module (see Section 2 in \mcite{FHL}). $V$ is called \name{simple} if the adjoint module $V$ has no proper submodules. 
\vspace{-.3cm}
\subsection{The $\la$-derivations on VOAs}
Recall that a derivation on a VOA $(V,Y,\vac,\om)$ is a linear map
$d:V\ra V$ satisfying $d\vac=0$, $d\om =0$, and
$d(a_nb)=(da)_nb+a_n(db)$ for all $a,b\in V$ and $n\in \Z$ (cf.
\mcite{DG1,H1}). We introduce the following generalized notion of
derivations with weights for VOAs.
\begin{df}\mlabel{df2.12}
Let $(V,Y,\vac)$ be a vertex algebra, and $\la\in \C$ be a fixed complex number. A linear map $d:V\ra V$ is called a \name{weak $\la$-derivation} of $V$ if it satisfies
    \begin{equation}\mlabel{2.11'}
d(Y(a,z)b)=Y(da,z)b+Y(a,z)db+\la Y(da,z)db,\quad a, b\in V.
\end{equation}
In other words,
$  d(a_mb)=(da)_mb+a_m(db)+\la (da)_{m}(db),$ for all $a,b\in V, m\in \Z.$

Let $(V,Y,\vac,\om)$ be a VOA. A \name{ $\la$-derivation} on $V$ is a weak $\la$-derivation $d:V\ra V$ such that $d\vac=0$ and $d\om=0$. The space of $\la$-derivations on $V$ is denoted by $\Diff_{\la}(V)$.
    \end{df}

    In particular, for a $\la$-derivation $d:V\ra V$ on a VOA $(V,Y,\vac,\om)$, since $d\om =0$, by \meqref{2.11'} we have
$d(Y(\om,z)b)=Y(d\om,a)+Y(\om,z)db+\la Y(d\om,z)db$, which implies
$d(Y(\om,z)b)=Y(\om,z) db$ and hence $dL(-1)b=L(-1)db$ for  $b\in V.$
Thus, the $\la$-derivation $d$ is automatically compatible with the derivation $D=L(-1)$ on the VOA: $d L(-1)=L(-1)d$.

By Definition \mref{df2.12}, it is easy to see that a $0$-differential operator is just a derivation on $V$. i.e., $\Diff _{0}(V)=\Der(V)$. The $1$-derivations have a nice correspondence with the automorphisms on $V$. Recall that an endomorphism of $V$ is a linear map $\phi: V\ra V$ such that
\begin{equation}\mlabel{2.6'}
    \phi(Y(a,z)b)=Y(\phi(a),z)\phi(b),
\end{equation}
$\phi(\vac)=\vac$, and $\phi(\om)=\om$ (cf. \cite{FHL}). The space of endomorphisms on $V$ is denoted by $\End(V)$.

\begin{lm}\mlabel{lm2.9}
        If $(V,Y,\vac,\om)$ is a simple VOA, 
        then $\End(V)$ is a division algebra over $\C$, with the unit group $\Aut(V)$.
    \end{lm}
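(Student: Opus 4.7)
The plan is to prove that every $\phi \in \End(V)$ is bijective with $\phi^{-1} \in \End(V)$, which identifies $\End(V)$ with $\Aut(V)$ and hence gives the claimed division-algebra structure with unit group $\Aut(V)$.

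First, I would show that $\phi$ is injective via the simplicity of $V$. The endomorphism relation $\phi(Y(a,z)b) = Y(\phi(a),z)\phi(b)$ implies that if $\phi(b) = 0$, then $\phi(a_n b) = \phi(a)_n \phi(b) = 0$ for every $a \in V$ and every $n \in \Z$. Thus $\Ker(\phi)$ is closed under all the operators $a_n$, i.e., it is a submodule of the adjoint $V$-module. Simplicity of $V$ forces $\Ker(\phi) \in \{0, V\}$, and since $\phi(\vac) = \vac \neq 0$, only the former is possible.

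Next, I would use the Virasoro element to establish grading-preservation and thereby surjectivity. Applying the endomorphism relation to $Y(\om,z)v$ with $\phi(\om) = \om$ yields $\phi(Y(\om,z)v) = Y(\om,z)\phi(v)$, and extracting the coefficient of $z^{-n-2}$ gives $\phi L(n) = L(n)\phi$ for every $n \in \Z$. In particular $\phi$ commutes with $L(0)$, hence preserves each eigenspace $V_n$. Since each $V_n$ is finite-dimensional by the VOA axioms, the injective restriction $\phi|_{V_n}$ must be bijective, and therefore $\phi \colon V \to V$ is bijective.

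Finally, to confirm $\phi^{-1} \in \End(V)$: it clearly fixes $\vac$ and $\om$, and applying $\phi$ to the desired identity $\phi^{-1}(Y(a,z)b) = Y(\phi^{-1}(a),z)\phi^{-1}(b)$ reduces it to the endomorphism relation already satisfied by $\phi$, so it holds. The main obstacle is the first step: one must recognize that the VOA endomorphism condition, though weaker than a module-endomorphism condition, is still strong enough to make $\Ker(\phi)$ a submodule, precisely because $\phi(a)_n \phi(b)$ vanishes whenever $\phi(b) = 0$, independently of $\phi(a)$. Once simplicity kills the kernel and finite-dimensional graded pieces supply surjectivity, the identification $\End(V) = \Aut(V)$ is immediate and the division-algebra structure with unit group $\Aut(V)$ follows.
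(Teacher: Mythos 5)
Your proposal is correct and follows essentially the same route as the paper's proof: the kernel is a submodule of the adjoint module killed by simplicity, $\phi(\om)=\om$ forces commutation with $L(0)$ so each finite-dimensional graded piece $V_n$ is preserved and hence mapped bijectively, giving $\phi\in\Aut(V)$. The only additions are cosmetic (deriving $\phi L(n)=L(n)\phi$ for all $n$ and explicitly checking $\phi^{-1}\in\End(V)$, which the paper leaves implicit).
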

\begin{proof}
Let $\phi\in \End_{V}(V)$ and $\phi\neq 0$. It suffices to show that $\phi$ is an automorphism. First we note that $\ker \phi$ is an ideal of $V$: for $u\in \ker\phi$, $a\in V$, and $m\in \Z$, we have $\phi(a_m u)=\phi(a)_m \phi(u)=0$ by \meqref{2.6'}. Since $\phi\neq 0$, we have $\ker \phi=0$, and $\phi$ is injective.
Moreover, for $a\in V_n$, since $\phi(\om)=\om$, we have $L(0)\phi(a)=\phi(\om)_1 \phi(a)=\phi(L(0)a)=n\phi(a)$. It follows that $\phi(V_n)\ssq V_n$ for $n\in \N$. Since $\dim V_n<\infty$, we have $\phi|_{V_n}:V_n\ra V_n$ is a linear isomorphism. Thus, $\phi$ is an automorphism.
\end{proof}

    \begin{prop}\label{prop2.13}
        Let $(V,Y,\vac,\om)$ be a simple VOA. Then the map $\al : \Diff_{1}(V)\ra \Aut(V): d\mapsto d+\Id_{V}$ is a bijection.
    \end{prop}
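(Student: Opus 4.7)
The plan is to show the map $\al$ is well-defined into $\Aut(V)$, construct an explicit inverse, and verify the two compositions are identities. The $\la=1$ case of the weak $\la$-derivation axiom is essentially rigged so that $d+\Id_V$ becomes multiplicative with respect to $Y$, so the computations are direct; the only nontrivial input is Lemma~\ref{lm2.9} to upgrade an endomorphism to an automorphism.

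First I would check that $\al$ takes values in $\Aut(V)$. Given $d\in \Diff_1(V)$, set $\phi = d + \Id_V$. Since $d\vac=0$ and $d\om=0$, we immediately get $\phi(\vac)=\vac$ and $\phi(\om)=\om$. For the multiplicativity \eqref{2.6'}, I would compute
\[
\phi(Y(a,z)b) = Y(a,z)b + d(Y(a,z)b) = Y(a,z)b + Y(da,z)b + Y(a,z)db + Y(da,z)db,
\]
using \eqref{2.11'} with $\la=1$, and recognize the right-hand side as $Y(a+da,z)(b+db) = Y(\phi(a),z)\phi(b)$. Hence $\phi\in \End(V)$. Since $\phi(\vac)=\vac\neq 0$, $\phi$ is a nonzero endomorphism of the simple VOA $V$, so Lemma~\ref{lm2.9} yields $\phi\in \Aut(V)$.

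Next I would construct the inverse $\b:\Aut(V)\ra \Diff_1(V)$ by $\b(\phi)= \phi - \Id_V$. For $\phi\in \Aut(V)$, $\b(\phi)\vac = 0$ and $\b(\phi)\om = 0$ are immediate from $\phi(\vac)=\vac$ and $\phi(\om)=\om$. To verify \eqref{2.11'} for $d:=\b(\phi)$, I would expand
\[
d(Y(a,z)b) = \phi(Y(a,z)b) - Y(a,z)b = Y(\phi(a),z)\phi(b) - Y(a,z)b,
\]
substitute $\phi(a) = a + da$ and $\phi(b) = b + db$, and collect terms to recover exactly $Y(da,z)b + Y(a,z)db + Y(da,z)db$. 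Thus $d\in \Diff_1(V)$.

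Finally, the two compositions $\b\circ \al$ and $\al\circ \b$ are both clearly the identity, as they amount to adding and then subtracting $\Id_V$. The only subtlety in the whole argument is invoking Lemma~\ref{lm2.9} to see that $\phi = d + \Id_V$ is not merely injective (which follows from simplicity plus $\phi\neq 0$) but actually surjective; this is the step that genuinely uses simplicity of $V$ and the finiteness of each graded piece $V_n$, and it is what allows the weight-one derivations to be identified with honest automorphisms rather than just injective endomorphisms.
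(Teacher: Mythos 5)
Your proposal is correct and follows essentially the same route as the paper: verify that $d+\Id_V$ fixes $\vac$ and $\om$ and is multiplicative via the $\la=1$ identity \eqref{2.11'}, invoke Lemma~\ref{lm2.9} to upgrade the nonzero endomorphism to an automorphism, and check that $\phi\mapsto\phi-\Id_V$ lands in $\Diff_1(V)$ and inverts the map. The only cosmetic difference is that you argue $\phi\neq 0$ from $\phi(\vac)=\vac$, whereas the paper notes that $\phi=0$ would force $d=-\Id_V$, contradicting $d\vac=0$; these are the same observation.
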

\begin{proof}
Since $d\vac=0$ and $d\om=0$, we have $(d+\Id_{V})(\vac)=\vac$ and $(d+\Id_{V})(\om)=\om$. Moreover,
\begin{align*}
(d+\Id)(Y(a,z)b)&=Y(da,z)b+Y(a,z)db+Y(da,z)db+Y(a,z)b\\
&=Y(da+a,z)b+Y(a+da,z)db\\
&=Y((d+\Id)(a),z)(d+\Id)(b).
\end{align*}
Thus $d+\Id_{V}\in \End_{V}(V)$. But $d+\Id_{V}\neq 0$, since
otherwise $d=-\Id_{V}$ does not satisfy $d\vac =0$. Hence $\al
(d)=d+\Id_{V}$ is in  $(\End_{V}(V))^{\times}=\Aut(V)$, in view of
Lemma \mref{lm2.9}.

On the other hand, for $\phi\in \Aut(V)$, we have $(\phi-\Id)(\vac)=0$ and $(\phi-\Id)(\om)=0$, and
\begin{align*}
&Y((\phi-\Id)(a),z)b+Y(a,z)(\phi-\Id)(b)+Y((\phi-\Id)(a),z)(\phi-\Id)(b)\\
&=Y(\phi(a),z)b-Y(a,z)b+Y(a,z)\phi(b)-Y(a,z)b+Y(\phi(a),z)\phi(b)-Y(a,z)\phi(b)\\
&\ \ -Y(\phi(a),z)b+Y(a,z)b\\
&=(\phi-\Id)(Y(a,z)b).
\end{align*}
Thus, $\phi-\Id$ is in $\Diff_{1}(V)$. Clearly, $\phi \mapsto \phi -\Id_{V}$ is the inverse of $\al $. Hence $\al $ is a bijection.
    \end{proof}

\section{Rota-Baxter operators on vertex algebras}
\mlabel{sec:rbo}
In this section, we introduce various types of Rota-Baxter operators on vertex algebras and the notion of Rota-Baxter vertex (operator) algebras. We will give a few examples of Rota-Baxter vertex algebras, some of which generalize the classical Rota-Baxter associative algebras and the Rota-Baxter Lie algebras.

\subsection{Definition and first properties of Rota-Baxter operators on vertex algebras}
Recall that a \name{Rota-Baxter (associative) algebra} of weight $\la\in \C$ (cf. \mcite{G}) is an associative algebra $(R,\cdot)$, equipped with a linear map $P:R\ra R$, called the \name{Rota-Baxter operator (RBO)}, satisfying
$$P(a)\cdot P(b)=P(P(a)\cdot b)+P(a\cdot P(b))+\la P(a\cdot b),\quad a, b\in R.$$
Similarly, a \name{Rota-Baxter Lie algebra} of weight $\la$ is a Lie algebra $(\g,[\cdot,\cdot])$ equipped with a linear map $P:\g\ra \g$ such that
$$[P(a),P(b)]=P([P(a),b]) +P([a,P(b)])+\la P([a,b]),
\quad a, b\in \g.$$
The Rota-Baxter operator can be defined over various other algebraic structures~\mcite{BBGN,BGP}.

The vertex operator $Y$ can be viewed as a generating Laurent series of the infinitely many component binary operations $a_mb, m\in Z$. We introduce the notion of Rota-Baxter operators on vertex algebras first for the component operations and then for the vertex operators.

\begin{df}\mlabel{df2.7}
Let $(V,Y,\bf{1})$ be a vertex algebra, $\lambda\in \C$ be a fixed complex number, and $m\in \Z$.
\begin{enumerate}
\item An \name{$m$-(ordinary) Rota-Baxter operator ($m$-(ordinary) RBO) on $V$ of weight
$\lambda$} is a linear map $P:V\rightarrow V$, satisfying the following condition:
\begin{equation}\mlabel{2.6}
(Pa)_{m}(Pb)=P(a_{m}(Pb))+P((Pa)_{m}b)+\la P(a_{m}b), \quad a, b\in V.
\end{equation}
We denote the set of $m$-RBOs by $\RBO(V)(m)$.
\item An \name{(ordinary) RBO on $V$ of weight $\la$} is a linear map $P:V\ra V$ satisfying \meqref{2.6} for every $m\in \Z$. In other words, $P$ satisfies the following condition:
\begin{equation}\mlabel{2.7}
Y(Pa,z)Pb=P(Y(Pa,z)b)+P(Y(a,z)Pb)+\la P(Y(a,z)b), \quad a, b\in V.
\end{equation}
We denote the set of RBOs on $V$ by $\RBO(V)=\bigcap_{m\in \Z} \RBO(V)(m)$.
\item An ($m$-)ordinary RBO $P$ on $V$ is called \name{translation invariant} if $PD=DP$, where $D$ is the translation operator: $Da=a_{-2}\vac$.
\item Let $V$ be a VOA, and let $P$ be an ($m$-)ordinary RBO. $P$ is called \name{homogeneous of degree $N$} if $P(V_n)\ssq V_{n+N}$ for all $n\in \N$. Degree zero RBOs are called \name{level preserving}.
\end{enumerate}
A \name{Rota-Baxter vertex algebra (RBVA)} is a vertex algebra
$(V,Y,\vac)$, equipped with an ordinary RBO $P:V\ra V$ of weight
$\la$. We denote such an algebra by $(V,Y,\vac,P)$. We can
similarly define a Rota-Baxter vertex operator algebra
(RBVOA) $(V,Y,\vac,\om,P)$.
\end{df}

\begin{remark}
The notions of $m$-RBOs and RBOs on VOAs
are closely related to the tensor form of the
classical Yang-Baxter equations for VOAs; see \mcite{BGL} for
more details.
\end{remark}

It is clear that for any vertex algebra $V$, $P=-\la \Id_{V}$ satisfies \meqref{2.7}. Hence any vertex algebra can be viewed as an RBVA trivially in this way.

Let $(V,Y,\vac,\om,P)$ be an RBVOA of weight $\la$. Recall that (cf. \mcite{Bor}) the first level $\g=V_{1}$ is a Lie algebra, with the Lie bracket $[a,b]=a_{0}b,$
for all $a,b\in \g$. Then it follows from \meqref{2.7} that $(\g, P|_{\g})$ is a Rota-Baxter Lie algebra. Conversely, if $p:\g\ra \g$ is an RBO of the Lie algebra $\g$, and $\g$ is the first level $V_1$ of a VOA $V$, then $p$ can be easily extended to a $0$-ordinary RBO $P:V\ra V$ by letting $P|_{V_1}=p$ and $P(V_n)=0$, for all $n\neq 1$; see Example 2.10 in \mcite{BGL}.

Our definition of the Rota-Baxter operators for vertex algebras
is similar to the $R$-matrix for VOAs in \mcite{X} where an {\bf $R$-matrix}  for a VOA $(V,Y,\vac,\om)$ is defined to be a linear map $R:V\ra V$ such that $[R,L(-1)]=0,$
and $Y_{R}:V\ra (\End V)[[z,z^{-1}]]$ defined by
\begin{equation} \mlabel{eq:myb}    Y_{R}(a,z)b:=Y(Ra,z)b+Y(a,z)Rb
\end{equation}
satisfies the Jacobi identity. The following conclusion is proved by Xu in \mcite{X}.

\begin{prop} \mlabel{prop3.3} Let $(V,Y,\vac,\om)$ be a
VOA. If a linear map $R:V\ra V$ satisfies $[R,L(-1)]=0$ and the
so called ``modified Yang-Baxter equation''
\begin{equation}\mlabel{2.9}
    Y(Ra,z)Rb-R(Y_{R}(a,z)b)=\la Y(a,z)b,
\end{equation}
where $\la=0$ or $-1$, then $R$ is an $R$-matrix for $V$.
\end{prop}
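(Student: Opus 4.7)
The plan is to verify that $Y_R$ satisfies the Jacobi identity and then conclude via the definition of an $R$-matrix. By Theorem \mref{thm2.2}, since truncation for $Y_R$ is immediate (each of $Y(Ra,z)b$ and $Y(a,z)Rb$ is truncated from below in $z$ for every $b\in V$), it suffices to establish weak commutativity and weak associativity for $Y_R$. Rather than attacking weak commutativity directly, where products of the form $R\cdot Y(\cdot,z)$ resist simplification, I would derive it from weak associativity via Proposition \mref{prop2.6}. This reduces the task to three checks: the $D$-bracket derivative property, skew-symmetry, and weak associativity for $Y_R$, with $D = L(-1)$.

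The first two checks are short. For the $D$-bracket derivative property, decompose $[D, Y_R(a,z)] = [D, Y(Ra,z)] + [D, Y(a,z)R]$; the first summand equals $\frac{d}{dz} Y(Ra,z)$ by the $L(-1)$-bracket derivative property for $Y$, and the second equals $\frac{d}{dz}Y(a,z)\cdot R$ after using $[R,D]=0$ to move $R$ past $D$. For skew-symmetry, apply the skew-symmetry of $Y$ to each summand of $Y_R(a,z)b = Y(Ra,z)b + Y(a,z)Rb$ to obtain $e^{zD}\bigl(Y(b,-z)Ra + Y(Rb,-z)a\bigr)$, which is exactly $e^{zD}Y_R(b,-z)a$.

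The main step is weak associativity. I would expand $Y_R(Y_R(a,z_0)b,\,z_2)c$ using the definition of $Y_R$ both outside and inside, and then apply the modified Yang-Baxter equation \meqref{2.9} to replace $R\bigl(Y_R(a,z_0)b\bigr)$ by $Y(Ra,z_0)Rb - \la Y(a,z_0)b$. This produces four terms of the form $Y\bigl(Y(\ast,z_0)\star,\,z_2\bigr)\bullet$, to each of which one applies the weak associativity of $Y$ after multiplying by a common $(z_0+z_2)^k$ for $k$ sufficiently large. Expanding the right-hand side $Y_R(a,z_0+z_2)Y_R(b,z_2)c$ directly into four summands and matching terms, one finds that the difference between the two sides reduces to a factor $Y(a,z_0+z_2)$ multiplied by $Y(Rb,z_2)Rc - R\,Y_R(b,z_2)c - \la Y(b,z_2)c$, which vanishes by a second application of the modified Yang-Baxter equation, this time to the pair $(b,c)$.

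The main obstacle is the bookkeeping in the weak associativity step: one must coordinate the application of \meqref{2.9} at two separate moments and invoke the weak associativity of $Y$ with a single common exponent $k$ that dominates the truncation for all four auxiliary products simultaneously. I would also remark that the restriction $\la \in \{0, -1\}$ plays no role in the derivation, since only the modified Yang-Baxter equation itself is used at both cancellation points; the restricted range merely reflects the values of classical interest for the Yang-Baxter equation.
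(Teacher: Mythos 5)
Your proof is correct, and it is genuinely different from what the paper does: the paper states Proposition~\mref{prop3.3} as Xu's result and gives no proof, and the only in-house argument it offers is the remark after Theorem~\mref{thm2.18}, which recovers the proposition \emph{only for} $\la=0$, where the modified Yang--Baxter equation \meqref{2.9} literally coincides with the weight-zero Rota--Baxter identity \meqref{2.7}; for $\la=-1$ the paper instead passes to the substitution $Q=(\Id-2P)/2$ rather than arguing directly. Your route is a self-contained verification that treats both values of $\la$ (indeed any $\la$, as you correctly observe) on the same footing: you prove weak associativity of $Y_R$ by applying \meqref{2.9} once to $R\bigl(Y_R(a,z_0)b\bigr)$ and once to $R\bigl(Y_R(b,z_2)c\bigr)$ and invoking the weak associativity \meqref{2.2} of $Y$ for the four triples $(Ra,Rb,c)$, $(a,b,c)$, $(Ra,b,Rc)$, $(a,Rb,Rc)$ with a common exponent, and you then obtain weak commutativity from Proposition~\mref{prop2.6} after checking skew-symmetry \meqref{2.5} and the $D$-bracket derivative property \meqref{2.4} for $Y_R$ (the latter being where the hypothesis $[R,L(-1)]=0$ enters), concluding via Theorem~\mref{thm2.2}. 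By contrast, the proof of Theorem~\mref{thm2.18} verifies weak commutativity directly by a nine-term expansion and does not assume translation invariance, but as noted it only yields the $\la=0$ case of the proposition. One small inaccuracy in your framing: the terms $Y(a,z_1)R\bigl(Y_R(b,z_2)c\bigr)$ do not in fact ``resist simplification''---the same application of \meqref{2.9} collapses them, so weak commutativity of $Y_R$ could be checked directly by the identical four-term matching, making the detour through Proposition~\mref{prop2.6} (and hence the use of $[R,L(-1)]=0$ at that point) optional rather than necessary; this does not affect the validity of your argument, since $[R,L(-1)]=0$ is a standing hypothesis and is in any case required by the definition of an $R$-matrix.
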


Note that, in view of \meqref{eq:myb}, $R$ satisfying \meqref{2.9} with $\la=0$ is the special case of a Rota-Baxter operator of weight $0$ on $V$ defined in \meqref{2.7}. Hence a linear map $P:V\ra V$ on a VOA $V$ is a translation invariant RBO $P$ of weight $0$ if and only if it is an $R$-matrix of $V$ in the sense of \mcite{X}.

On the other hand, the identity \meqref{2.9} with $\la =-1$ for $P$ is equivalent to the Rota-Baxter identity \meqref{2.7} with $\la = -1$ for the operator $Q=(\Id -2P)/2$ by a direct calculation, as in the classical cases of associative algebras and Lie algebras~(see~\mcite{G}).

\smallskip

Let $(A,d,1_{A})$ be a commutative unital differential algebra. Recall that $A$ is a vertex algebra (cf. \mcite{Bor}) with the vertex operator $Y$ given by
\begin{equation}\mlabel{3.11}
    Y(a,z)b:=(e^{zd}a)\cdot b,\quad a,b\in A,
\end{equation}
and $\vac:=1_{A}$. The differential operator $d$ of $A$ is the translation operator $D $ in \meqref{2.4}. In particular, let $V=\C[t]$ be the polynomial algebra with variable $t$. Define
\begin{equation}\mlabel{2.10}
Y(t^{m},z)t^{n}:=(e^{z\frac{d}{dt}}t^{m})\cdot t^{n}=\sum_{j\geq 0} \binom{m}{j} t^{m+n-j}z^{j}, \quad m,n \in \N.
\end{equation}
Then $(\C[t],Y,1)$ is a vertex algebra.

\begin{prop} \mlabel{prop3.4}
Let $P:\C[t]\ra \C[t]$ be the usual (integration) Rota-Baxter operator on $\C[t]$:
$$P(t^{m})=\int_{0}^{t}s^{m}ds=\frac{t^{m+1}}{m+1}\quad m\in \N.$$
Then $(\C[t],Y,1,P)$ is an RBVA of weight $0$
\end{prop}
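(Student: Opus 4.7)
The plan is to verify the weight-$0$ Rota-Baxter axiom
\begin{equation*}
Y(Pa,z)Pb = P(Y(Pa,z)b) + P(Y(a,z)Pb)
\end{equation*}
on $\C[t]$. Since both sides are $\C$-bilinear in $a,b$ and the vertex operator is defined on monomials by \meqref{2.10}, it suffices to take $a=t^{m}$ and $b=t^{n}$ and compare coefficients of each power of $z$. The conceptual route I would take exploits the fact that $(\C[t],\cdot,1)$ is a commutative unital differential algebra with $d=\frac{d}{dt}$ and that the vertex operator is $Y(u,z)v=(e^{zd}u)v$, so that extracting the coefficient of $z^{k}$ turns the desired identity into
\begin{equation*}
(d^{k}Pa)(Pb) \;=\; P\bigl((d^{k}Pa)\,b\bigr) + P\bigl((d^{k}a)(Pb)\bigr), \qquad k\ge 0.
\end{equation*}

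For $k\ge 1$, the key observation is that $dP=\Id$, hence $d^{k}Pa=d^{k-1}a$. Setting $u:=d^{k-1}a$ and $v:=Pb$, the identity to prove is $uv=P(u\cdot b)+P((du)\,v)$. Applying $P$ to the Leibniz rule $d(uv)=(du)v+u(dv)=(du)v+u\cdot b$ and using $Pd(uv)=uv$ (since $v(0)=Pb(0)=0$ implies $u(0)v(0)=0$, so the evaluation at $0$ of $uv$ also vanishes, and $Pd$ acts as the identity on polynomials vanishing at the origin) yields exactly this equality. For $k=0$, the same argument with $u:=Pa$ and $v:=Pb$ recovers the classical weight-$0$ Rota-Baxter identity on the commutative algebra $\C[t]$, which is just integration by parts applied to two antiderivatives vanishing at $0$.

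Alternatively, one can carry out the brute-force computation: using $P(t^{k})=t^{k+1}/(k+1)$ and \meqref{2.10}, the coefficient of $z^{j}$ on the left-hand side becomes $\binom{m+1}{j}/((m+1)(n+1))$ times $t^{m+n+2-j}$, while the two terms on the right-hand side produce $\binom{m+1}{j}/((m+1)(m+n+2-j))$ and $\binom{m}{j}/((n+1)(m+n+2-j))$ times $t^{m+n+2-j}$ respectively. After clearing denominators the identity reduces to the elementary binomial relation $\binom{m+1}{j}(m+1-j)=(m+1)\binom{m}{j}$, which is immediate from $\binom{m+1}{j}(m+1-j)=(m+1)!/(j!(m-j)!)$.

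There is no serious obstacle here; the only mildly subtle point is recognising that after passing to coefficients of $z^{k}$ the statement is a direct consequence of integration by parts for antiderivatives vanishing at $0$, which is the intrinsic reason that the classical Rota-Baxter structure on $\C[t]$ lifts to the vertex algebra structure \meqref{3.11}. I would present the integration-by-parts argument as the main proof for its conceptual transparency, and mention the binomial verification as an explicit check.
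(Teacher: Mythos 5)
Your proposal is correct, but your primary argument is not the one the paper uses. The paper proves Proposition~\mref{prop3.5} first --- the analogous statement for the divided power algebra $A=\bigoplus_{m\geq 0}\C t_m$ with $P(t_m)=t_{m+1}$ --- by a direct factorial computation with \meqref{2.12''}, and then obtains Proposition~\mref{prop3.4} as a special case via the identification $t_n=t^n/n!$ of $\C[t]$ with the divided power algebra; your ``brute-force'' binomial verification is essentially that same computation, just carried out in the monomial basis of $\C[t]$ and reduced to $\binom{m+1}{j}(m+1-j)=(m+1)\binom{m}{j}$, which is a fine (and slightly more direct) variant. Your main argument --- extract the coefficient of $z^k$, use $dP=\Id$ to rewrite $d^kPa=d^{k-1}a$, and then apply $P$ to the Leibniz rule together with the observation that $Pd$ is the identity on polynomials vanishing at $0$ (in particular on $u\cdot Pb$) --- is instead in the spirit of the paper's \emph{second} proof, the one it sketches after Corollary~\mref{coro3.10}: there the mechanism is Proposition~\mref{prop2.14} ($P$ a right inverse of a weak $0$-derivation gives a weak RBO) upgraded to an ordinary RBO by checking that the products $P(a)\cdot P(b)$ and $(d^na)\cdot P(b)$ lie in $P(\C[t])$, a check the paper dismisses as easy and which your fundamental-theorem-of-calculus observation $Pd(f)=f-f(0)$ makes explicit. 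So your write-up is sound and in fact covers both of the paper's routes; what the integration-by-parts presentation buys is a conceptual explanation (the classical differential Rota--Baxter structure on $\C[t]$ lifts through \meqref{3.11}), while the paper's divided-power computation buys the more general Proposition~\mref{prop3.5}, which does not follow formally from your $\C[t]$ statement alone even though the same argument applies to it verbatim.
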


We give this result as a special case of a more general setting. The divided power algebra is the vector space  $A=\bigoplus_{m=0}^\infty \C t_m,$
equipped with the product
$t_m\cdot t_n:=\binom{m+n}{n}t_{m+n}, \quad m,n\in \N.$ Then $\cdot$ is commutative associative with unit $1_A=t_0$.
Note that $d:A\ra A,\ d(t_m)=t_{m-1}$ is a derivation on $A$, and so $(A,Y,1_A)$ is a vertex algebra, with
\begin{equation}\mlabel{2.12''}
Y(t_m,z)t_n:=(e^{zd}t_m)\cdot t_n=\sum_{j\geq 0}\frac{(m+n-j)!}{(m-j)!n!j!}t_{m+n-j}z^j, \quad m, n\in N.
\end{equation}

\begin{prop} \mlabel{prop3.5}
Define
$$P:A\ra A, \quad P(t_m):=t_{m+1}, m\in \N.$$
Then $(A, Y, 1_A, P)$ is an RBVA of weight $0$.
\end{prop}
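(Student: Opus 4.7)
The plan is to verify the Rota-Baxter identity
\begin{equation*}
Y(Pa,z)Pb = P(Y(Pa,z)b) + P(Y(a,z)Pb)
\end{equation*}
for all $a,b \in A$ by expanding $Y$ via the formula $Y(u,z)v = (e^{zd}u)\cdot v$ from \meqref{2.12''} and comparing coefficients of $z^k$. After clearing the common factor $1/k!$, this reduces the claim to the family of identities
\begin{equation*}
(d^k Pa)\cdot Pb = P\bigl((d^k Pa)\cdot b\bigr) + P\bigl((d^k a)\cdot Pb\bigr), \quad k\geq 0,
\end{equation*}
in the commutative algebra $(A,\cdot)$.

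For $k=0$ this is the classical weight-zero Rota-Baxter identity $(Pa)(Pb)=P((Pa)b)+P(a(Pb))$ on $(A,\cdot)$. I would verify it directly on basis vectors: since $P(t_m)=t_{m+1}$ and $t_r\cdot t_s=\binom{r+s}{s}t_{r+s}$, both sides reduce to $\binom{m+n+2}{n+1}t_{m+n+2}$, the right-hand side using Pascal's rule $\binom{m+n+1}{n}+\binom{m+n+1}{n+1}=\binom{m+n+2}{n+1}$.

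For $k\geq 1$ the crucial input is the relation $dP=\Id_A$, which is immediate from $d(t_{m+1})=t_m$. This gives $d^k Pa=d^{k-1}a$, so the identity to prove becomes $(d^{k-1}a)\cdot Pb = P((d^{k-1}a)\cdot b)+P((d^k a)\cdot Pb)$. Setting $u:=(d^{k-1}a)\cdot Pb$ and applying the Leibniz rule for $d$ together with $dPb=b$ yields $du=(d^k a)\cdot Pb+(d^{k-1}a)\cdot b$, so that
\begin{equation*}
P((d^k a)\cdot Pb) = Pd(u) - P((d^{k-1}a)\cdot b).
\end{equation*}
It remains to show $Pd(u)=u$.

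The key observation is that $P$ takes values in the augmentation ideal $A_+:=\bigoplus_{m\geq 1}\C t_m$, so $Pb\in A_+$, and hence the product $u=(d^{k-1}a)\cdot Pb$ again lies in $A_+$. On $A_+$ the operator $Pd$ acts as the identity (since $Pd(t_m)=t_m$ for $m\geq 1$), which gives $Pd(u)=u$ and completes the verification. The principal technical obstacle is precisely the non-commutation of $d$ and $P$: rather than $dP=Pd$, one has $Pd=\Id-\pi_0$ with $\pi_0$ the projection onto $\C t_0$, and the proof hinges on the observation that every product appearing in the $z$-expansion absorbs the constant component into $A_+$, where $Pd$ is harmless.
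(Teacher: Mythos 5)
Your argument is correct, and it takes a genuinely different route from the paper's. The paper proves Proposition~3.5 by a direct computation on the basis: it expands $Y(Pt_m,z)Pt_n$, $P(Y(Pt_m,z)t_n)$ and $P(Y(t_m,z)Pt_n)$ via \eqref{2.12''} and matches coefficients through an explicit factorial identity (essentially Pascal's rule in disguise). You instead compare coefficients of $z^k$ in $Y(u,z)v=(e^{zd}u)\cdot v$ and reduce everything to the single algebraic mechanism $dP=\Id_A$, the Leibniz rule, and the fact that $Pd$ is the identity on the augmentation ideal $A_+=\bigoplus_{m\ge 1}\C t_m$, into which every product with a factor $Pb$ falls since $A_+$ is an ideal and $P(A)\subseteq A_+$. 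All the steps check out: $d^kPa=d^{k-1}a$ for $k\ge 1$, $du=(d^ka)\cdot Pb+(d^{k-1}a)\cdot b$, and $Pd(u)=u$ because $u\in A_+$; the $k=0$ case is the classical weight-zero Rota--Baxter identity, which you verify by Pascal's rule (it would in fact also follow from the same $Pd$-argument applied to $u=Pa\cdot Pb\in A_+$). What your approach buys is conceptual clarity and generality: it is essentially the ``differential Rota--Baxter'' point of view that the paper itself develops right after this proposition (the statement on commutative unital differential Rota--Baxter algebras and Corollary~\ref{coro3.10}, which the authors note gives a second proof of Propositions~\ref{prop3.4} and~\ref{prop3.5}), and your handling of the obstruction $\ker d=\C t_0$ via the ideal $A_+$ is exactly the point that makes the general criterion applicable here. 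The paper's computation, by contrast, is fully self-contained and elementary but specific to the divided power basis.
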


As is well known, the polynomial algebra $\C[t]$, with the basis $t_n:=t^n/n!$, is a (realization of the) divided power algebra.  Thus Proposition~\mref{prop3.4} is a consequence of Proposition~\mref{prop3.5}.
\begin{proof}[Proof of Proposition~\mref{prop3.5}]
For $m,n\in \N$, by \meqref{2.12''} we have
{\small \begin{align*}
Y(Pt_{m},z)Pt_{n}&=Y(t_{m+1},z)t_{n+1}=\sum_{j\geq 0}\frac{(m+n+2-j)!}{(m+1-j)!(n+1)!j!}t_{m+n+2-j}z^j,\\
P(Y(Pt_{m},z)t_{n})&=P(Y(t_{m+1},z)t_n)=P\bigg(\sum_{j\geq 0}\frac{(m+n+1-j)!}{(m+1-j)!n!j!}t_{m+n+1-j}z^j\bigg)\\
&=\sum_{j\geq 0}\frac{(m+n+1-j)!}{(m+1-j)!n!j!}t_{m+n+2-j}z^j,\\
P(Y(t_{m},z)Pt_{n})&=P(Y(t_m,z)t_{n+1})=P\bigg(\sum_{j\geq 0}\frac{(m+n+1-j)!}{(m-j)!(n+1)!j!}t_{m+n+1-j}z^j\bigg)\\
&=\sum_{j\geq 0}\frac{(m+n+1-j)!}{(m-j)!(n+1)!j!}t_{m+n+2-j}z^j.
\end{align*}}
Since we have
{\small \begin{align*}
&\sum_{j\geq 0}\frac{(m+n+1-j)!}{(m+1-j)!n!j!}+\sum_{j\geq 0}\frac{(m+n+1-j)!}{(m-j)!(n+1)!j!}\\
&=\sum_{j\geq 0}\frac{(m+n+1-j)!(n+1)+(m+n+1-j)!(m+1-j)}{(m+1-j)!(n+1)!j!}\\
&=\sum_{j\geq 0}\frac{(m+n+1-j)!(m+n+2-j)}{(m+1-j)!(n+1)!j!}=\sum_{j\geq 0}\frac{(m+n+2-j)!}{(m+1-j)!(n+1)!j!},
\end{align*}}
it follows that $Y(Pt_m,z)Pt_n=P(Y(Pt_m,z)t_n)+P(Y(t_m,z)Pt_n)$. Hence $(A,Y,1_A,P)$ is an RBVA of weight $0$.
\end{proof}

Both $(\C[t],\frac{d}{dt},P,1_{\C[t]})$ and $(A=\bigoplus_{m=0}^\infty \C t_m,d,P,1_A)$ are special cases of the commutative unital differential Rota-Baxter algebras $(A,d,P,1_A)$. The latter means that, $d$ is a derivation on $A$, $P$ is an RBO of weight $0$ on $A$, and $d\circ P=\Id_A$; see \mcite{GK} for more details. We have the following property. See Corollary~\mref{coro3.10} for another related result.

\begin{prop}
Let $(A,d,P,1_A)$ be an commutative unital differential Rota-Baxter algebra, and let $Y(a,z)b=(e^{zd}a)\cdot b$. Then we have:
$$Y(Pa,z)Pb-P(Y(Pa,z)b)-P(Y(a,z)Pb)\in (\ker d)[[z]], \quad a, b\in V.$$
In particular, $(A,Y,1_{A},P)$ is an RBVA of weight $0$ if $\ker d=0$.
\end{prop}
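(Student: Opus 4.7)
My plan is to apply the derivation $d$ coefficient-wise to the expression
$$E(a,b;z) := Y(Pa,z)Pb - P(Y(Pa,z)b) - P(Y(a,z)Pb)$$
and show that $dE(a,b;z) = 0$ in $A[[z]]$. Since $(\ker d)[[z]]$ is, by definition, the set of formal series whose coefficients lie in $\ker d$, this immediately yields the desired inclusion. The three ingredients are: (i) $d$ commutes with $e^{zd}$, because both are formal power series in the single operator $d$, so the two agree coefficient-wise on $z$; (ii) $d$ is a derivation on $A$, so the Leibniz rule holds for the product $\cdot$; and (iii) $d\circ P = \Id_A$, which extends coefficient-wise to $A[[z]]$.

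For the first term, $Y(Pa,z)Pb = (e^{zd}Pa)\cdot Pb$, and applying $d$ using (i), (ii), (iii) gives $(e^{zd}(dPa))\cdot Pb + (e^{zd}Pa)\cdot (dPb) = (e^{zd}a)\cdot Pb + (e^{zd}Pa)\cdot b$. For the second and third terms, ingredient (iii) gives
$$d\bigl(P(Y(Pa,z)b)\bigr) = Y(Pa,z)b = (e^{zd}Pa)\cdot b, \qquad d\bigl(P(Y(a,z)Pb)\bigr) = Y(a,z)Pb = (e^{zd}a)\cdot Pb.$$
Summing with the signs dictated by $E$, the four contributions cancel in two pairs, so $dE(a,b;z) = 0$ as required.

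This proves $E(a,b;z) \in (\ker d)[[z]]$ for all $a,b \in A$, establishing the first assertion. For the ``in particular'' clause, when $\ker d = 0$ the series $E(a,b;z)$ itself vanishes, which is precisely identity \meqref{2.7} with $\la = 0$. Combined with the fact (recalled before \meqref{3.11}) that any commutative unital differential algebra $(A,d,1_A)$ is a vertex algebra under $Y(a,z)b = (e^{zd}a)\cdot b$, we conclude that $(A,Y,1_A,P)$ is an RBVA of weight $0$. I do not foresee a serious obstacle; the only subtle point is that $d$ commutes with $e^{zd}$ and with $P$ when these act on formal series, but both follow by inspecting the action on each $z^n$-coefficient.
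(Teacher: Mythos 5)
Your proof is correct: applying $d$ termwise to $E(a,b;z)=Y(Pa,z)Pb-P(Y(Pa,z)b)-P(Y(a,z)Pb)$, using the Leibniz rule, the fact that $d$ commutes with $e^{zd}$, and $d\circ P=\Id_A$, the four contributions cancel in pairs, so every coefficient of $E(a,b;z)$ lies in $\ker d$, and the ``in particular'' clause follows as you say. The paper argues differently in execution, though the engine is the same ($dP=\Id_A$ plus the derivation property): it works coefficient by coefficient, handling the $z^0$-coefficient (the $-1$-product, i.e.\ the product of $A$) separately via the Rota--Baxter identity of $P$ on $A$, which gives exact vanishing there; for the coefficient of $z^n$ with $n\geq 1$ it uses the component identity $(da)_{-n}=n\,a_{-n-1}$ to rewrite $nP(a)_{-n-1}P(b)-nP(P(a)_{-n-1}b)-nP(a_{-n-1}P(b))$ as $a_{-n}P(b)-Pd(a_{-n}P(b))$, which lies in $\ker d$ because $x-Pd(x)\in\ker d$. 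Your route differentiates the whole generating series at once, avoiding the case split and the division-by-$n$ step; as a byproduct it makes visible that the first assertion needs only that $P$ is a right inverse of $d$ (the Rota--Baxter property of $P$ on $A$ is not used for membership in $(\ker d)[[z]]$). What the paper's separate treatment of the constant term buys is the slightly stronger observation that the $z^0$-coefficient of $E$ vanishes identically, not merely modulo $\ker d$ --- a refinement not required by the statement, and one your argument recovers anyway when $\ker d=0$.
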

\begin{proof}
First we note that  $P(a)_{-1}P(b)=P(P(a)_{-1}b)+P(a_{-1}P(b))$
for all $a,b\in A$, since the product of  $A$ is given by
$x\cdot y=x_{-1}y$ for all $x,y\in A$.

Now assume that $n\geq 1$. By \meqref{2.4} we have $d(a_{-n}b)=(da)_{-n}b+a_{-n}db$ and  $(da)_{-n}=nda_{-n-1}$. Moreover, $a-Pd(a)\in \ker d$ for all $a,b\in A$ as $d\circ P=\Id_{A}$, hence we have:
\begin{align*}
&nP(a)_{-n-1}P(b)-nP(P(a)_{-n-1}b)-nP(a_{-n-1}P(b))\\
&=(dP(a))_{-n}P(b)-P((dP(a))_{-n}b)-P((da)_{-n}P(b))\\
&=a_{-n}P(b)-Pd(a_{-n}P(b))\\
&\equiv 0\pmod{\ker d}.
\end{align*}
This finishes the proof because $Y(a,z)b=\sum_{n\geq 0} (a_{-n-1}b)z^{n}$.
\end{proof}

We also have the notion of relative Rota-Baxter operators
 introduced in \mcite{BGL}, as the operator form of
the classical Yang-Baxter equation for VOAs.
\begin{df}
Let $(V,Y,\vac,\om)$ be a VOA and $(W,Y_W)$ be a weak $V$-module. A \name{relative Rota-Baxter operator (relative RBO)} is a linear map $T:W\ra V$ such that
  \begin{equation}\mlabel{4.60'}
    Y(Tu,z)Tv=T(Y_{W}(Tu,z)v)+T(Y_{WV}^W(u,z)Tv), \quad u,v\in W.
    \end{equation}
\end{df}
In Section~\mref{sec:dend}, we will use the dendriform vertex algebra structure to give examples of relative Rota-Baxter operators on vertex algebras without vacuum.

\subsection{The $\la$-derivations and the weak local Rota-Baxter operators}

Propositions \mref{prop3.4} and \mref{prop3.5} indicate that a
right inverse $P$ of the translation operator $D$ on certain
commutative vertex algebras can give rise to an ordinary RBO of weight $0$.

However, in the case of non-commutative VOAs, the translation operator $D=L(-1)$ and most of the derivations are {\em not} invertible globally---they only admit local inverses. On the other hand, by its defining identity \meqref{2.7}, if $P:V\ra V$ is an RBO, then we must have $P(a)_m P(b)\in P(V)$ for all $a,b\in V$ and $m\in \Z$. i.e., $P(V)\ssq V$ is a vertex Leibniz subalgebra (see Definition \mref{df2.10}). This is also a strong condition imposed on $P$. If we weaken these conditions, we can expect to construct more examples of Rota-Baxter type operators from the ``right inverses" of $\la$-derivations on vertex algebra $V$ on a suitable domain.

\begin{df}\mlabel{df3.5}
Let $(V,Y,\bf{1})$ be a vertex algebra, $\lambda\in \C$ be a fixed complex number, and $U\subset V$ be a linear subspace.
\begin{enumerate}
\item
A \name{ weak local Rota-Baxter operator (RBO) on $U$ of weight $\lambda$} is a linear map $P:U\rightarrow V$, satisfying the following condition. Whenever $a,b\in U$ and $m\in \Z$ such that $P(a)_mP(b)\in P(U)$, one has $a_{m}(Pb)+(Pa)_{m}b+\la a_{m}b\in U$, and
\begin{equation}\mlabel{2.6'b}
(Pa)_{m}(Pb)=P\Big(a_{m}(Pb)+(Pa)_{m}b+\la a_{m}b\Big).
\end{equation}
A \name{ weak global RBO of weight $\la$} is a weak local RBO on $V$. 
\item An \name{ ordinary local RBO on $U$ of weight $\la$} is a weak local RBO $P:U\ra V$ of weight $\la$ such that $P(U)$ is a vertex Leibniz subalgebra of $V$. In other words, $P:U\ra V$ is a linear map satisfying:
\begin{equation}\mlabel{2.7'}
Y(Pa,z)Pb=P\Big(Y(Pa,z)b+Y(a,z)Pb+\la Y(a,z)b\Big), \quad a, b\in U.
\end{equation}
An \name{ordinary global RBO of weight $\la$} is an ordinary local RBO on $V$. 
In particular, by equation \meqref{2.7'}, the notion of ordinary global RBOs is the same as the notion of ordinary RBOs in Definition \mref{df2.7}.
\end{enumerate}

A local RBO (weak or ordinary) $P:U\ra V$ is called \name{ translation invariant}, if $DU\ssq U$ and $PD=DP$ on $U$. Let $V$ be a VOA, and let $P:U\ra V$ be a local RBO. Then $P$ is called \name{ homogeneous of degree $N$}, if $U\subset V$ is a homogeneous subspace: $U=\bigoplus_{n=0}^\infty U_n$, and $P(U_n)\ssq V_{n+N}$ for all $n\in \N$.
\end{df}

\begin{remark}
There are key distinctions between a weak local
RBO $P$ and an ordinary RBO. One is that here $P$ is defined
locally on a subspace $U$ of $V$. The other one is that in
equations \meqref{2.6'b} and \meqref{2.7'}, we do not require
$P(a)_m b$ or $a_mP(b)$ to be individually contained in the domain
$U$ of $P$. So their right hand sides cannot be separated into
three terms, as in \meqref{2.6} and \meqref{2.7}. 
The following diagram illustrated the relations between these concepts:
$$
\begin{tikzcd}
\{\text{ordinary\ global\ RBOs}\}\arrow[r,hook,"\text{subset}"] \arrow[d,hook,"\text{subset}"]& \{\text{weak\ global\ RBOs}\}\arrow[d,hook,"\text{subset}"]\\
\{\text{ordinary\ local\ RBOs}\}\arrow[r,hook,"\text{subset}"]& \{\text{weak\ local\ RBOs}\}
\end{tikzcd}
$$
\end{remark}

We extend the following properties on RBOs to weak and ordinary local RBOs for later use.

\begin{prop}\mlabel{prop2.9}
Let $(V,Y,\vac)$ be a vertex algebra, and $U\subset V$ be a linear subspace.
\begin{enumerate}
\item If $P:U\ra V$ is a weak (resp. ordinary) local RBO on $U$ of weight $\la\neq 0$, then $-P/\la$ is a weak (resp. ordinary) local RBO on $U$ of weight $-1$. If $P:U\ra V$ is a weak (resp. ordinary) local RBO of weight $1$, then $\la P$ is a weak (resp. ordinary) local RBO of weight $\la$.
\mlabel{it:locrescale}
\item Let $P$ be an ordinary local RBO on $U$ of weight $\la$. Then $\tilde{P}=-\la \Id_{V}-P$ is an ordinary local RBO on $U$ of weight $\la$.
\mlabel{it:locadj}
\end{enumerate}
\end{prop}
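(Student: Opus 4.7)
The plan is to verify both parts by direct substitution into the defining identity \meqref{2.7'}, exploiting the bilinearity of the vertex operator $Y(\cdot, z)\cdot$ in both slots. Neither part requires any vertex-algebraic structure beyond this identity itself; the arguments are purely algebraic rearrangements.

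For part \meqref{it:locrescale}, I would set $Q := -P/\la$, rewrite every occurrence of $P$ in the weight-$\la$ identity \meqref{2.7'} as $-\la Q$, and then divide through by $-\la$. The result is precisely the weight-$(-1)$ identity for $Q$. The weak-local closure hypothesis $P(a)_m P(b) \in P(U)$ is preserved because $Q(U) = P(U)$, so the same pairs $(a,b) \in U \times U$ witness the closure condition for $Q$. The converse rescaling from weight $1$ to weight $\la$ is the same manipulation run in reverse. For the ordinary local version, the Leibniz-subalgebra condition on $Q(U) = P(U)$ is automatic.

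For part \meqref{it:locadj}, writing $\tilde P = -\la \Id_V - P$, I would expand $Y(\tilde P a, z) \tilde P b$ by bilinearity into four summands, and independently expand $\tilde P\bigl(Y(\tilde P a, z)b + Y(a,z)\tilde P b + \la Y(a,z)b\bigr)$ by linearity into a combination of the six expressions $Y(a,z)b$, $Y(Pa,z)b$, $Y(a,z)Pb$, $P(Y(a,z)b)$, $P(Y(Pa,z)b)$, $P(Y(a,z)Pb)$. The weight-$\la$ RBO identity \meqref{2.7'} for $P$ then collapses the three $P$-terms on the right-hand side into $Y(Pa,z)Pb$, after which the remaining pure-$Y$ terms match by a direct count. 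The vertex Leibniz subalgebra condition on $\tilde P(U)$ follows automatically from the established identity, which exhibits $Y(\tilde P a, z) \tilde P b$ as $\tilde P$ applied to an element of $U$.

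The only genuine pitfall is the local nature of $P$: neither $Pa$ nor $Pb$ is assumed to lie in $U$, so one cannot freely feed such elements back into the argument of $P$. However, \meqref{2.7'} requires applying $P$ only to the composite expression $Y(Pa,z)b + Y(a,z)Pb + \la Y(a,z)b$ as a whole, not to its individual summands, so the computation above never leaves the domain prescribed by Definition \mref{df3.5}. Consequently the main obstacle is purely notational bookkeeping rather than any conceptual difficulty.
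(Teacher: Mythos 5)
Your proposal is correct and follows essentially the same route as the paper's own proof: for part \meqref{it:locrescale} the same substitution $Q=-P/\la$ with the closure hypothesis transported through $Q(U)=P(U)$, and for part \meqref{it:locadj} the same direct verification in which the argument of $\tilde{P}$ is recognized as the negative of the composite $Y(Pa,z)b+Y(a,z)Pb+\la Y(a,z)b\in U((z))$, so that $P$ and $\tilde{P}$ are only ever applied to elements of $U$ --- exactly the domain point the paper's argument also turns on (the paper writes out only the weak case of \meqref{it:locrescale} and declares the rest similar). Just phrase the right-hand-side computation in \meqref{it:locadj} with the $P$-argument kept composite rather than literally split into the three terms $P(Y(a,z)b)$, $P(Y(Pa,z)b)$, $P(Y(a,z)Pb)$, which need not individually be defined, as you yourself note in your closing paragraph.
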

\begin{proof}
\meqref{it:locrescale} Let $P:U\ra V$ be a weak local RBO of weight $\la\neq 0$. Let $a,b\in U$ and $n\in \Z$ satisfy $(-P/\la)(a)_n(-P/\la)(b)\in (-P/\la)(U)=P(U)$. Then $P(a)_nP(b)\in P(U)$, and by Definition \ref{df2.7}, we have $a_nP(b)+(Pa)_nb+\la  a_nb\in U$, and $(Pa)_{n}(Pb)=P(a_{n}(Pb)+(Pa)_{n}b+\la a_{n}b).$ It follows that $a_n(-P/\la)(b)+((-P/\la)(a))_nb-  a_nb\in U$ and
$$((-P/\la)(a))_{n}((-P/\la)(b))=(-P/\la)(a_{n}(-P/\la)(b)+((-P/\la)(a))_{n}b- a_{n}b).$$
Thus, $-P/\la:U\ra V$ is a weak local RBO of weight $-1$. The proof of the rest for \meqref{it:locrescale} and \meqref{it:locadj} is similar.
\delete{
\meqref{it:locadj}
Since $P:U\ra V$ is an ordinary RBO, by Definition \ref{df2.7}, we have $a_{n}(Pb)+(Pa)_{n}b+\la a_{n}b\in U$ for all $a,b\in U$ and $n\in \Z$. It follows that
$$a_n(-\la-P)(b)+(-\la-P)(a)_nb+\la a_nb=-a_nPb-(Pa)_nb-\la a_nb\in U,$$
and it is easy to check that
$$(-\la-P)(a)_n(-\la-P)(b)=(-\la-P)(a_n(-\la-P)(b)+(-\la-P)(a)_nb+\la a_nb), \quad a, b\in U, n \in Z.$$
Thus $(-\la-P)(a)_n(-\la-P)(b)\in (-\la-P)(U)$ for all $a,b\in U$ and $n\in \Z$, and $\tilde{P}=-\la-P$ satisfies \eqref{2.7}. This shows that $\tilde{P}$ is an  ordinary local RBO on $U$ of weight $\la$.
}
\end{proof}

An immediate advantage of the local RBOs is that a local inverse of a weak $\la$-derivation (see Definition \mref{df2.12}) of a vertex algebra gives rise to a weak local RBO of weight $\la$.

\begin{prop}\mlabel{prop2.14}
Let $(V,Y,\vac)$ be a vertex algebra, and let $d:V\ra V$ be a weak $\la$-derivation. Suppose that there exists a linear map $P:U(:=dV)\ra V$ such that $d\circ P=\Id_U$. Then $P:U\ra V$ is a weak local RBO on $U$ of weight $\la$.
\end{prop}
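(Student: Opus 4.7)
The plan is to exploit the defining identity of a weak $\la$-derivation applied to elements of the form $Pa, Pb$, together with the relation $d\circ P=\Id_U$. The whole argument reduces to a single application of the derivation identity followed by tracking what lies in $U$.

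First, I would fix arbitrary $a,b\in U$ and $m\in\Z$. Since $a,b\in U=dV$ and $d\circ P=\Id_U$, we have $d(Pa)=a$ and $d(Pb)=b$. Applying the defining identity of a weak $\la$-derivation (Definition \mref{df2.12}) to the elements $Pa,Pb\in V$ yields
\begin{equation*}
d\bigl((Pa)_m(Pb)\bigr) = (d(Pa))_m(Pb) + (Pa)_m(d(Pb)) + \la\,(d(Pa))_m(d(Pb)) = a_m(Pb) + (Pa)_m b + \la\, a_m b.
\end{equation*}
In particular, $a_m(Pb)+(Pa)_m b+\la\, a_m b = d\bigl((Pa)_m(Pb)\bigr)\in dV = U$, which is the first requirement in the definition of a weak local RBO (and this inclusion holds unconditionally, without invoking any assumption on $(Pa)_m(Pb)$).

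Next, to establish the Rota-Baxter identity itself, I would assume the hypothesis $(Pa)_m(Pb)\in P(U)$ and write $(Pa)_m(Pb)=Pc$ for some $c\in U$. Applying $d$ to both sides and using $d\circ P=\Id_U$ together with the computation above gives
\begin{equation*}
c = d(Pc) = d\bigl((Pa)_m(Pb)\bigr) = a_m(Pb)+(Pa)_m b + \la\, a_m b.
\end{equation*}
Substituting back yields $(Pa)_m(Pb) = Pc = P\bigl(a_m(Pb)+(Pa)_m b+\la\, a_m b\bigr)$, which is precisely the required weak local RBO identity.

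There is essentially no obstacle: the argument is a direct unfolding of definitions. The conceptual point is simply that the weak $\la$-derivation property, evaluated on the image of $P$, produces precisely the Rota-Baxter combination on the right-hand side, so the containment $a_m(Pb)+(Pa)_m b+\la\, a_m b\in U$ is automatic, and the hypothesis $(Pa)_m(Pb)\in P(U)$ is needed only to identify the preimage $c$ in the final step.
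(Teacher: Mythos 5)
Your proof is correct and follows essentially the same route as the paper: apply the weak $\la$-derivation identity to $Pa,Pb$, use $d\circ P=\Id_U$ to identify the preimage $c$, and conclude the Rota-Baxter identity. Your extra remark that the containment $a_m(Pb)+(Pa)_m b+\la\,a_mb\in U$ holds unconditionally (being $d$ of an element of $V$, hence in $dV=U$) is a valid minor strengthening but does not change the argument.
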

\begin{proof}
Let $a,b\in U$ and $n\in \Z$ satisfy $(Pa)_n(Pb)
=P(c)\in P(U)$. Then we have
\begin{align*}
dP(c)&=d((Pa)_n(Pb))=(dP)(a)_n(Pb)+(Pa)_n(dPb)+\la (dPa)_n(dPb)\\
&=a_n(Pb)+(Pa)_nb+\la a_nb,
\end{align*}
and $dP(c)=c$ since $dP=\Id_U$. Thus, $a_n(Pb)+(Pa)_nb+\la a_nb=c\in U$, and
$$(Pa)_n(Pb)= PdP(c)=P(a_n(Pb)+(Pa)_nb+\la a_nb).$$
Hence $P:U\ra V$ satisfies \eqref{2.6}, and so $P$ is a weak local RBO on $U$ of weight $\la$.
\end{proof}

\begin{coro}\mlabel{coro3.10}
Let $(A,d,P,1_A)$ be an commutative unital differential Rota-Baxter algebra. Then the vertex algebra $(A,Y,1_A,P)$ with $Y$ given by \meqref{3.11} is an RBVA of weight $0$, if $P$ satisfies $P(a)\cdot P(b)\in P(A)$ and $(d^n a)\cdot P(b)\in P(A)$, for all $a,b\in A$ and $n\in \N$.
\end{coro}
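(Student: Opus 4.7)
The plan is to deduce this directly from Proposition~\mref{prop2.14}, using the hypotheses on $P$ only at the last step to promote a weak global RBO to an ordinary one. First I would verify that the derivation $d$ on $A$ is a weak $0$-derivation of the vertex algebra $(A,Y,1_A)$. Since $d$ commutes with $e^{zd}$ and satisfies the Leibniz rule for $\,\cdot\,$, a direct computation gives
\begin{align*}
d(Y(a,z)b)&=d\bigl((e^{zd}a)\cdot b\bigr)=(e^{zd}(da))\cdot b+(e^{zd}a)\cdot (db)\\
&=Y(da,z)b+Y(a,z)(db),
\end{align*}
which is exactly the weak $0$-derivation condition of Definition~\mref{df2.12}. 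Moreover, $d\circ P=\Id_A$ forces $d$ to be surjective, so $dA=A$. Proposition~\mref{prop2.14} then applies with $U=dA=A$ and yields that $P\colon A\to A$ is a weak local RBO on $A$ of weight $0$, i.e.\ a weak global RBO.

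Next I would upgrade this to an ordinary global RBO by checking that $P(a)_m P(b)\in P(A)$ for all $a,b\in A$ and $m\in\Z$. Since $Y(u,z)v=\sum_{n\geq 0}\frac{z^n}{n!}(d^n u)\cdot v$, the only nonzero components are
$$(Pa)_{-n-1}(Pb)=\tfrac{1}{n!}(d^n(Pa))\cdot Pb,\qquad n\geq 0.$$
For $n=0$ this is $Pa\cdot Pb$, which lies in $P(A)$ by the first hypothesis. For $n\geq 1$, the relation $d\circ P=\Id_A$ gives $d^n(Pa)=d^{n-1}(dPa)=d^{n-1}a$, hence
$$(d^n(Pa))\cdot Pb=(d^{n-1}a)\cdot Pb\in P(A)$$
by the second hypothesis. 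Thus $P(a)_m P(b)\in P(A)$ for every $m\in\Z$.

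Consequently, the weak local RBO identity given by Proposition~\mref{prop2.14} applies for every $m$, yielding $(Pa)_m(Pb)=P\bigl(a_m(Pb)+(Pa)_m b\bigr)$; by the $\C$-linearity of $P$ this splits into the ordinary Rota-Baxter identity $(Pa)_m(Pb)=P(a_m(Pb))+P((Pa)_m b)$, and summing over $m$ recovers~\meqref{2.7} with $\la=0$. I do not foresee a serious obstacle here: the whole content of the two hypotheses on $P$ is precisely to ensure the image $P(A)$ absorbs every coefficient of $Y(Pa,z)Pb$, which is the one point the preceding proposition could only handle modulo $\ker d$.
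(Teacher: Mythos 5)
Your argument is correct and is essentially the paper's own proof: the paper likewise first invokes Proposition~\mref{prop2.14} with $U=dA=A$ (using $d\circ P=\Id_A$) to get a weak global RBO of weight $0$, and then uses the expansion $Y(Pa,z)Pb=Pa\cdot Pb+\sum_{j\ge 1}\frac{z^j}{j!}(d^{j-1}a)\cdot Pb$ together with the two hypotheses to see that all coefficients lie in $P(A)$, upgrading $P$ to an ordinary RBO. Your extra details (the direct Leibniz-rule check that $d$ is a weak $0$-derivation and the splitting of $P(a_m(Pb)+(Pa)_m b)$ by linearity) are fine and match the intended reasoning.
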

\begin{proof}
By \meqref{2.4} and Definition \mref{df2.12}, $d=D:A\ra A$ is an
$0$-derivation of the vertex algebra $(A,Y,1_A)$. Since $d\circ
P=\Id_A$ by the definition of a differential RBA, it follows that  $dA=A$, and $P:A(=dA)\ra A$ is a
weak global RBO of weight $0$ on the vertex algebra $A$ by Proposition
\mref{prop2.14}. If $P$ satisfies the last condition, then
$Y(P(a),z)P(b)=P(a)\cdot P(b)+\sum_{j\geq 1}\frac{1}{j!}
(d^{j-1}a)\cdot P(b)\in P(A)((z)) $, and so $P:A\ra A$ is an
ordinary RBO of weight $0$.
\end{proof}
By \meqref{2.10} and \meqref{2.12''}, it is easy to check that the conditions in Corollary \mref{coro3.10} are satisfied by $(\C[t],\frac{d}{dt}, P,1_{\C[t]})$ and $(A=\bigoplus_{m=0}^\infty \C t_m,d,P,1_A)$. This provides us with another proof of Propositions \mref{prop3.4} and \mref{prop3.5}.

There are many examples of weak $0$-derivations on VOAs. We can use them to construct examples of weak local RBOs on general VOAs by Proposition \mref{prop2.14}.

\begin{example}\label{ex2.15}
Let $(V,Y,\vac,\om)$ be a CFT-type VOA. By the main Theorem in \mcite{DLinM}, the operator $L(-1):V\ra V$ is injective on $V_{+}$. Moreover, we have  $L(-1)\vac=\vac_{-2}\vac=0$, and $L(-1)$ is a weak $0$-derivation by \eqref{2.3} and \eqref{2.4}.

Let $U=L(-1)V=L(-1)V_+$. Define $P:U\ra V$ by setting
\begin{equation}\mlabel{2.18'}
    P(u):=L(-1)^{-1}u,
\end{equation}
for all $u\in L(-1)V_+$. Clearly, $P$ is well defined and $L(-1)P=\Id_{U}$. Then by Proposition \mref{prop2.14}, $P:U\ra V$ given by \meqref{2.18'} is a weak local RBO on $U=L(-1)V_+$ of weight $0$, and it is homogeneous of degree $-1$ and translation invariant. Note that $P$ is not ordinary since $P(U)=V_+$ is not a vertex Leibniz subalgebra of $V$.
\end{example}

\begin{example}\label{ex2.16}
Let $V=M_{\hat{\h}}(k,0)$ be the level $k\neq 0$ Heisenberg VOA of rank $r$ (cf. \cite{FLM}, see also \cite{FZ}). Recall that $\h$ is an $r$-dimensional vertex space, equipped with a nondegenerate symmetric bilinear form $(\cdot|\cdot)$, and $M_{\hat{\h}}(k,0)$ is the Verma module over the infinite dimensional Heisenberg Lie algebra: $\hat{\h}=\h\otimes \C[t,t^{-1}]\op \C K$, with
\begin{equation}\mlabel{2.13}
[\al(m),\b(n)]=m(\al|\b)\delta_{m+n,0}K, \quad m,n \in \Z,
\end{equation}
where $\al(m)=\al\otimes t^m$. We have $\hat{\h}=\widehat{\h}_{\geq 0}\op \widehat{h}_{<0}$, where $\widehat{h}_{\geq 0}=\h\otimes \C[t]\op \C K$ and $\widehat{\h}_{<0}=\h\otimes t^{-1}\C[t^{-1}]$, and $M_{\hat{\h}}(k,0)=U(\hat{\h})\otimes_{U(\hat{\h}_{\geq 0})}\C\vac$, with $\al(n)\vac=0$ for all $n\geq 0$ and $\al\in \h$, and $K\vac=k\vac.$

In particular, $\al(0)\in \hat{\h}$ is a central element by
\eqref{2.13}, and $\al(0)u=0$ for all $u\in
M_{\hat{\h}}(k,0)$. Fix a nonzero element $\al\in \h$, and
consider the operator $d=\al(1):V\ra V$. For $u,v\in
V=M_{\hat{\h}}(k,0)$ and $n\in \Z$, we have
\begin{align*}
\al(1)(u_nv)&=u_n(\al(1)v)+[\al(1),u_n]v=u_n(\al(1)v)+\sum_{j\geq 0}\binom{1}{j} (\al(j)u)_{1+n-j}v
=u_n(\al(1)v)+(\al(1)u)_n v,
\end{align*}
since $\al(0)u=0$. Thus, $d=\al(1)$ is a weak $0$-derivation on $M_{\hat{\h}}(k,0)$. By \eqref{2.13}, it is also easy to see that $d=\al(1)$ acts as $=k\frac{\partial}{\partial \al(-1)}$ on $M_{\hat{\h}}(k,0)$.  Hence $\al(1)M_{\hat{\h}}(k,0)=M_{\hat{\h}}(k,0)$. Define a linear map $P: \al(1)V=V\ra V$ as follows.
\begin{equation}\label{2.14}
\begin{aligned}
P:=\frac{1}{k}\int (\cdot) d\al(-1)\vac : M_{\hat{\h}}(k,0)&\ra M_{\hat{\h}}(k,0), \\
h^1(-n_{1})\dots h^k(-n_k)\al(-1)^m\vac&\mapsto \frac{1}{k(m+1)}h^1(-n_{1})\dots h^k(-n_k)\al(-1)^{m+1}\vac,
\end{aligned}
\end{equation}
where $S=\{\al=\al_1,\al_2,\dots \al_r\}$ is a basis of $\h$, and $h^1,\dots,h^k\in S$ are not equal to $\al$. Clearly, we have $dP=\Id_{V}$, and so $P:V\ra V$ is a weak global RBO on $M_{\hat{\h}}(k,0)$ of weight $0$ by Proposition \ref{prop2.14}. The operator $P$ is also homogeneous of degree $1$. However, it is not an ordinary RBO since $P(V)=\al(-1)M_{\hat{\h}}(k,0)$ is not a vertex Leibniz subalgebra.
\end{example}

\begin{example}
Let $V=V_{\hat{\g}}(k,0)$ be the level $k$ vacuum module VOA associated with $\g=\mathfrak{sl}(2,\C)=\C e\op \C h\op \C f$ (cf. \cite{FZ}). Let $V_{\hat{\g}}(k,0)=U(\hat{\g})\otimes _{U(\hat{\g}_{\geq 0})}\C\vac$ be the Weyl vacuum module over the affine Lie algebra $\hat{\g}=\g\otimes \C[t,t^{-1}]\op \C K$.

Since $h=h(-1)\vac\in V_1$, the map $d=o(h)=h(0): V\ra V$ is a $0$-derivation of $V$ (cf. \cite{DG1}). Moreover, $V_{\hat{\g}}(k,0)$ is a sum of $h(0)$-eigenspaces (\cite{DW}):
$V_{\hat{\g}}(k,0)=\bigoplus_{\la\in 2\Z} V_{\hat{\g}}(k,0)(\la),$
where $V_{\hat{\g}}(k,0)(\la)=\{v\in V_{\hat{\g}}(k,0)\,|\,h(0)v=\la v\}$ for all $\la\in 2\Z$.

Let $U$ be the sum of nonzero eigenspaces of $h(0)$: $U=\bigoplus_{\la\in 2\Z\bs\{0\}} V_{\hat{\g}}(k,0)(\la)$, and let $P: U\ra V$ be given by
$P(u)=\frac{1}{\la} u$
for all $u\in V_{\hat{\g}}(k,0)(\la)$, with $\la\neq 0$. Then $dP=\Id_{U}$, and so $P:U\ra V$ is a weak local RBO on $U$ of weight $0$. Moreover, $P$ is homogeneous of weight $0$. However, it is not ordinary since $P(U)=U$ is not a subalgebra.

Let $d_1:=e^{h(0)}-1: V\ra V$. Then $d_1$ is a $1$-derivation by Proposition \ref{prop2.13}. Let $P_{1}:U\ra V$ be given by $P_{1}(u)=\frac{1}{e^\la-1}u$, for all $u\in V_{\hat{\g}}(k,0)(\la)$, with $\la\neq 0$. Then $d_{1}P_{1}=\Id_{U},$ and by Proposition \ref{prop2.14}, $P_{1}:U\ra V$ is a weak local RBO of weight $1$.
\end{example}

\subsection{Properties and further examples of RBVAs}

The next theorem generalizes a basic property of RBOs (see
\cite[Theorem 1.1.13]{G}) and provides a systematic way to produce
examples of RBVAs.

\begin{thm}\mlabel{thm2.11}
Let $(V,Y,\vac)$ be a vertex algebra and $P:V\ra V$ a linear map. Then $P$ is an idempotent RBO of weight $-1$, if and only if $V$ admits a decomposition: $V=V^{1}\op V^{2}$ into a direct sum of vertex Leibniz subalgebras $V^{1}$ and $V^{2}$, and $P :V\ra V^{1}$ is the projection map onto $V^{1}$:
$$P(a^{1}+a^{2})=a^{1},\quad a^{1}\in V^{1}, a^{2}\in V^{2}.$$
In particular, $V^{1}=P(V)$ and $V^{2}=(\Id- P)(V)$.
\end{thm}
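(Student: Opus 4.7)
The plan is to adapt the classical proof of the analogous decomposition theorem for idempotent Rota-Baxter associative algebras (cf.\ \mcite{G}) to the vertex setting, where the associative product is replaced by the family of component operations $a_{m}b$, $m \in \Z$, encoded in the vertex operator $Y(a,z)b$. The two implications will be handled separately; both reduce to bookkeeping on the $\Z$-family of identities \meqref{2.6}.

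For the sufficiency direction, I would assume $V = V^{1} \op V^{2}$ with each $V^{i}$ a vertex Leibniz subalgebra and $P$ the projection onto $V^{1}$. Idempotence $P^{2} = P$ is immediate from the block description. To verify the weight $-1$ Rota-Baxter identity \meqref{2.7}, I decompose $a = a^{1} + a^{2}$ and $b = b^{1} + b^{2}$ with $a^{i}, b^{i} \in V^{i}$, and expand $Y(Pa,z)b + Y(a,z)Pb - Y(a,z)b$ by bilinearity. The eight resulting terms cancel down to $Y(a^{1},z)b^{1} - Y(a^{2},z)b^{2}$. Applying $P$, and using that each $V^{i}$ is a vertex Leibniz subalgebra (so $Y(a^{i},z)b^{i}$ lies in $V^{i}[[z,z^{-1}]]$), the first summand is fixed by $P$ while the second is annihilated, leaving exactly $Y(a^{1},z)b^{1} = Y(Pa,z)Pb$, as required.

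For the necessity direction, assume $P$ is an idempotent RBO of weight $-1$. Set $V^{1} := P(V)$ and $V^{2} := \ker P = (\Id - P)(V)$; the elementary linear algebra of idempotents then gives $V = V^{1} \op V^{2}$ and identifies $P$ with the projection onto $V^{1}$. The substantive step is to show that $V^{1}$ and $V^{2}$ are vertex Leibniz subalgebras, i.e.\ closed under each $a_{m}b$, $m \in \Z$. For $V^{1}$: if $a, b \in V^{1}$ then $Pa = a$ and $Pb = b$, so \meqref{2.6} with $\la = -1$ collapses to $a_{m}b = P(a_{m}b)$, whence $a_{m}b \in V^{1}$. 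For $V^{2}$: if $a, b \in \ker P$ then $Pa = Pb = 0$, and \meqref{2.6} reduces to $0 = -P(a_{m}b)$, giving $a_{m}b \in \ker P = V^{2}$. Alternatively, Proposition~\mref{prop2.9}\meqref{it:locadj} shows that $\Id - P$ is itself a weight $-1$ RBO with image $V^{2}$, so the argument for $V^{1}$ applied to $\Id - P$ recovers the $V^{2}$ closure uniformly.

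There is no serious obstacle: the Rota-Baxter relation for vertex algebras splits into one identity per coefficient $m \in \Z$, and each such identity is handled exactly as in the associative case. The only vertex-algebraic input is the definition of a vertex Leibniz subalgebra as closure under all operations $a_{m}b$, which is precisely the information encoded by the component-wise RBO relations \meqref{2.6}; no use is made of the Jacobi identity, weak commutativity, or the vacuum, which explains why the result holds already at the level of vertex Leibniz algebras.
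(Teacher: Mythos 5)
Your proposal is correct and follows essentially the same route as the paper: for sufficiency you expand the weight $-1$ identity using the direct sum and the closure of $V^{1}$, $V^{2}$ under the component operations, and for necessity you use idempotence to get $V=P(V)\op(\Id-P)(V)$ and the specialization of \meqref{2.6} to show each summand is a vertex Leibniz subalgebra. The only cosmetic difference is that you verify the closure of $V^{2}=\ker P$ by a direct kernel computation (also noting the alternative via Proposition~\mref{prop2.9}\meqref{it:locadj}), which is exactly the device the paper invokes.
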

\begin{proof}
Let $P:V\ra V$ be an idempotent RBO of weight $-1$. Then the idempotency gives the direct sum $V=P(V)\oplus (\Id-P)(V)$. Further, $V^{1}=P(V)\subseteq V$ is closed under the vertex operator $Y$, since by \meqref{2.7}, we have
$$(Pa)_{n}(Pb)=P(a_{n}P(b)+P(a)_{n}b-a_{n}b)\in P(V), \quad a,b\in V.$$
Hence $(V^{1},Y|_{V^{1}})$ is a vertex Leibniz subalgebra of $V$. By Proposition~\mref{prop2.9}, $V^{2}=(\Id-P)(V)$ is also a vertex Leibniz subalgebra. 
\delete{
Since $P^{2}=P$ by the assumption, we have $V=V^{1}\op V^{2}$. Moreover, for $a\in V$, we have
$$a=P(a)+(1-P)(a)=a^{1}+a^{2},$$
where $a^{1}=P(a)$ and $a^{2}=(1-P)(a)$. Note that $a^{1}$ and $a^{2}$ are unique as the sum is direct. Then $P(a^{1}+a^{2})=P(a)=a^{1}$ is the projection onto $V^{1}$.
}

Conversely, suppose that $V$ has a decomposition $V=V^{1}\op V^{2}$ into vertex Leibniz subalgebras, and $P:V\ra V^{1}$ is the projection. Then $P$ is idempotent. Further, for $a=a^{1}+a^2$ and $b=b^1+b^2$ in $V$, with $a^i,b^i\in V^i$ for $i=1,2$, we have
\begin{align*}
P(a)_{n}P(b)&=a^1_{n}b^1,\quad P((Pa)_nb)=P(a^{1}_{n}b^1+a^1_{n}b^2)=a^{1}_{n}b^1+P(a^1_{n}b^2),\\
P(a_nP(b))&=P(a^1_{n}b^1+a^2_{n}b^1)=a^1_{n}b^1+ P(a^2_{n}b^1),\\
P(a_nb)&=P(a^1_{n}b^1+a^1_{n}b^2+a^2_{n}b^1+a^2_{n}b^2)=a^1_{n}b^1+P(a^1_{n}b^2)+P(a^2_{n}b^1).
\end{align*}
It follows that $P(a)_{n}P(b)=  P((Pa)_nb)+ P(a_nP(b))-P(a_nb)$, for all $a,b\in V$. Thus $P:V\ra V$ is an idempotent RBO of weight $-1$.
\end{proof}

\begin{example}\mlabel{ex2.12}
Let $V=V_{L}$ be the lattice VOA (cf. \mcite{FLM}) associated with the rank one positive definite even lattice $L=\Z \al$, with $(\al|\al)=2N$ for some $N\in \Z_{>0}$. Recall that
$V_L=M_{\hat{\h}}(1,0)\otimes \C^\epsilon[L],$ where $\h=\C\al$, and $\epsilon: L\times L\ra \{\pm 1\}$ is a $2$-cocycle. The vertex operators are given by
\begin{align*}
&Y(\al(-1)\vac,z)=\al(z)=\sum_{n\in \Z} \al(n) z^{-n-1},\\
&Y(e^{m\al},z)=E^{-}(-m\al, z)E^{+}(-m\al,z)e_{m\al} z^{m\al},\\
&Y(\al(-n_{1}-1)\dots \al(-n_{k}-1)e^{m\al},z)={\tiny\begin{matrix}\circ \\\circ\end{matrix}}(\partial_{z}^{(n_1)}\al(z))\dots (\partial_{z}^{(n_k)}\al(z))Y(e^{m\al},z){\tiny\begin{matrix}\circ \\\circ\end{matrix}},
\end{align*}
where $ E^{\pm}(\al,z)=\exp\left(\sum_{n\in \Z_{\pm}}\frac{\al(n)}{n}z^{-n}\right)$, and $\partial_z^{(n)}=\frac{1}{n!} \frac{d^n}{dz^n}. $ Also recall that $V_L$ has the following decomposition  as a module over the Heisenberg VOA $M_{\hat{\h}}(1,0)$:
        \begin{equation}\mlabel{2.19}
            V_{\Z\al}=\bigoplus_{m\in \Z} M_{\hat{\h}}(1,m\al).\end{equation}
        Note that for $m,n\in \Z_{<0}$, we have
        \begin{align*}
            Y(e^{m\al},z)e^{n\al}&=E^{-}(-m\al, z)E^{+}(-m\al,z)e_{m\al} z^{m\al}(e^{n\al})\\
            &=E^{-}(-m\al, z)E^{+}(-m\al,z)\epsilon(m\al,n\al) z^{2Nmn} e^{(m+n)\al},
        \end{align*}
which is contained in $M_{\hat{\h}}(1,(m+n)\al)((z))$, with $m+n\in \Z_{<0}$, in view of the decomposition \meqref{2.19}. Then it follows that
$$V^1:=\bigoplus_{m\in \Z_{\geq 0}} M_{\hat{\h}}(1,m\al)\quad \mathrm{and}\quad V^2:=\bigoplus_{m\in \Z_{<0}} M_{\hat{\h}}(1,m\al)$$
are vertex Leibniz subalgebras of $V_{\Z\al}$, and $V_{\Z\al}=V^1\op V^2$. Note that $\vac\in M_{\hat{\h}}(1,0)\subset V^1$. Then by Theorem \mref{thm2.11}, the projection $P:V_{\Z\al}\ra V^1$ is a level-preserving idempotent RBO of weight $-1$. This construction can also be generalized to the higher rank case. Let $L=\Z \al_{1}\op \dots \op\Z\al_n$ be a positive-definite even lattice of rank $n$, with an orthonormal basis  $\{\al_1,\dots \al_n\}$. Consider the following additive subgroups of $L$:
    \begin{equation}\mlabel{2.12'}
L^1:=\Z\al_1\op\dots \Z\al_{n-1}\op \Z_{\geq 0} \al_n\quad \mathrm{and}\quad L^2:=\Z\al_1\op\dots \Z\al_{n-1}\op \Z_{<0} \al_n.
        \end{equation}
Then we have $L=L^1\cup L^2$ and $L^1\cap L^2=\emptyset$. Let
        \begin{equation}\mlabel{2.13'}
V^1:=\bigoplus_{\al\in L^1}M_{\hat{\h}}(1,\al)\quad \mathrm{and}\quad V^2:=\bigoplus_{\b\in L^2} M_{\hat{\h}}(1,\b).
        \end{equation}
Then for a similar reason as the rank one case, $V^1$ and $V^2$ are vertex Leibniz subalgebras of $V_L$, and so the projection map $P:V_{L}\ra V^1$ along $V^2$ is a level-preserving RBO of weight $-1$. Furthermore, recall that the Virasoro element of $V_L$ is $\om =\frac{1}{2} \sum_{i=1}^n \al_{i}^2(-1)\vac$. In particular, we have $L(-1)M_{\hat{\h}}(1,\al) \ssq M_{\hat{\h}}(1,\al)$ for $\al \in L$. Thus $L(-1)V^i\ssq V^i$ for $i=1,2$, and so the RBO $P:V_{L}\ra V^1$ is translation invariant: $PL(-1)=L(-1)P$.
    \end{example}

\delete{\begin{example}\mlabel{ex 2.14}
Let $(V,Y,\vac,\om)$ be a VOA, and let $(W,Y_{W})$ be a weak $V$-module. It is observed in \mcite{L1} (see also \mcite{FHL}) that $V\op W$ carries the structure of a vertex algebra, with the vertex operator given by
\begin{equation}\mlabel{2.11}
Y_{V\op W}(a+v,z)(b+w):=(Y(a,z)b)+(Y_{W}(a,z)w+Y_{WV}^{W}(v,z)b),
\quad a,b\in V, v,w\in W,
\end{equation}
where $Y_{WV}^{W}$ is defined by the skew-symmetry formula:
    \begin{equation}\mlabel{2.12}
Y_{WV}^{W}(v,z)b=e^{zL(-1) }Y_{W}(b,-z)v.
        \end{equation}
        We can think of $V\op W$ as the semi-direct product $V\rtimes W$ of the VOA $V$ with the weak-module $W$. Since $Y_{V\op W}(v,z)w=0$ for $v,w\in W$, it follows that $(V,Y)$ and $(W,Y_{V\op W}|_{W})$ are vertex Leibniz subalgebras of $V\rtimes  W$. Then by Theorem \mref{thm2.11},
        \begin{equation}\mlabel{2.13b}P:V\rtimes W\ra V, a+v\mapsto a,\end{equation}
        is an RBO of weight $-1$ on the vertex algebra $V\rtimes W$.

        If $W$ only has integral weights, then $(V\rtimes W,Y_{V\op W}, \vac,\om)$ is a VOA (cf. \mcite{L1}). Then $P$ in \meqref{2.13b} is a level-preserving RBO of weight $-1$.
        This example will be used in the discussion of the next section.
    \end{example}}

There is another general construction of ordinary RBOs on vertex algebras. First, we have the following lemma whose proof is straightforward.
\begin{lm}\mlabel{lm3.19}
Let $(V,Y,\vac)$  be a vertex algebra, and $(A,\cdot,1)$ be a commutative unital associative algebra. Let $\hat{V}:=V\o _{\C} A$. Extend $Y:V\ra \End(V)[[z,z^{-1}]]$ to
\begin{equation}\mlabel{3.28}
\hat{Y}:\hat{V}\ra \End(\hat{V})[[z,z^{-1}]],\quad \hat{Y}(a\o f,z)(b\o g):=Y(a,z)b\o f\cdot g,\quad a,b\in V, f,g\in A.
\end{equation}
Then $(\hat{V},\hat{Y},\vac\o 1)$ is a vertex algebra, and the translation operator $\hat{D}:\hat{V}\ra \hat{V}$ is given by $\hat{D}(a\o f)=a_{-2}\vac\o f=D(a)\o f$.
\end{lm}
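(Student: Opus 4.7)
The plan is to verify the four vertex algebra axioms for $(\hat V,\hat Y,\vac\otimes 1)$ by reducing each one to the corresponding axiom for $(V,Y,\vac)$, using the commutativity and associativity of the product on $A$ to handle the tensor factors. The key observation is that multiplication in $A$ is commutative and has no $z$-dependence, so formally $\hat Y$ factors as ``apply $Y$ in the $V$-slot, multiply in the $A$-slot,'' and every identity involving $\hat Y$ is $A$-linear in the obvious way.

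For truncation: on simple tensors $a\otimes f$ and $b\otimes g$, $\hat Y(a\otimes f,z)(b\otimes g)=Y(a,z)b\otimes (f\cdot g)$, and since $a_n b=0$ for $n$ large, the same holds for $\hat Y$; extend by linearity. For the vacuum property, $\hat Y(\vac\otimes 1,z)(b\otimes g)=Y(\vac,z)b\otimes (1\cdot g)=b\otimes g$, so $\hat Y(\vac\otimes 1,z)=\mathrm{Id}_{\hat V}$. For creation, $\hat Y(a\otimes f,z)(\vac\otimes 1)=Y(a,z)\vac\otimes f\in\hat V[[z]]$, and its $z\to 0$ limit is $a\otimes f$.

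The Jacobi identity for $\hat Y$ applied to generators $a\otimes f$, $b\otimes g$, $c\otimes h$ has all three terms of the form $(\text{Jacobi term of }Y\text{ for }a,b,c)\otimes (f\cdot g\cdot h)$, where for the right-hand side I use that $\hat Y(\hat Y(a\otimes f,z_0)(b\otimes g),z_2)(c\otimes h)=Y(Y(a,z_0)b,z_2)c\otimes ((f\cdot g)\cdot h)$, and commutativity/associativity of $A$ identifies $(f\cdot g)\cdot h$ with $f\cdot g\cdot h$. Hence the $\hat Y$-Jacobi identity is exactly the $Y$-Jacobi identity tensored with the scalar $f\cdot g\cdot h\in A$, and holds. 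Extending bilinearly to all of $\hat V$ finishes the axiom check.

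Finally, for the translation operator, by definition $\hat D(a\otimes f)=\hat Y(a\otimes f,z)(\vac\otimes 1)|_{\text{coeff of }z}=\bigl(Y(a,z)\vac\bigr)\bigl|_{\text{coeff of }z}\otimes f=a_{-2}\vac\otimes f=D(a)\otimes f$, as claimed. The only mild subtlety is making sure the commutativity of $A$ is genuinely used (it enters in verifying that the two non-commutative terms on the left of the Jacobi identity combine correctly), but since the $A$-factor commutes out of all formal series in $z$, there is no obstacle; the proof is mechanical.
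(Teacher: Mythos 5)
Your verification is correct, and it is exactly the routine axiom-by-axiom check the paper has in mind: the paper gives no written proof for this lemma, stating only that it is straightforward, and your reduction of each axiom (truncation, vacuum, creation, Jacobi, and the formula for $\hat{D}$) to the corresponding axiom of $(V,Y,\vac)$ tensored with the commutative associative product of $A$ is the intended argument. The point you flag—that commutativity of $A$ is what lets the two left-hand Jacobi terms carry the same factor $f\cdot g\cdot h$—is indeed the only place where a hypothesis beyond associativity is used.
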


\begin{prop}\mlabel{prop3.20}
    With the notations as above, let $(V,Y,\vac)$  be a vertex algebra, and $(A,\cdot,1,P)$ be a commutative unital Rota-Baxter algebra. Let
    \begin{equation}\mlabel{3.29}
\hat{P}: \hat{V}\ra \hat{V},\quad   \hat{P}(a\o f):=a\o P(f)
,\quad a\o f\in \hat{V}.
    \end{equation}
Then $(\hat{V}, \hat{Y}, \vac\o 1, \hat{P})$  is an Rota-Baxter vertex algebra of weight $\la$.
    \end{prop}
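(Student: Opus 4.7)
My plan is to verify the Rota-Baxter identity \meqref{2.7} for $\hat{P}$ on $\hat{V}$ by a direct computation on simple tensors, then extend by linearity. The vertex algebra structure on $\hat{V}$ is already given by Lemma~\mref{lm3.19}, so the only thing left to check is the defining identity of an ordinary RBO of weight $\la$.

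First, I would take arbitrary simple tensors $x = a \otimes f$ and $y = b \otimes g$ in $\hat{V}$, with $a, b \in V$ and $f, g \in A$. Using the definitions \meqref{3.28} of $\hat{Y}$ and \meqref{3.29} of $\hat{P}$, I would unfold the left-hand side of \meqref{2.7} as
\begin{equation*}
\hat{Y}(\hat{P}x, z)\hat{P}y = \hat{Y}(a \otimes P(f), z)(b \otimes P(g)) = Y(a,z)b \,\otimes\, P(f)\cdot P(g).
\end{equation*}
Applying the Rota-Baxter identity for the associative algebra $(A,\cdot,1,P)$ to the second tensor factor gives
\begin{equation*}
Y(a,z)b \,\otimes\, \bigl[P(P(f)\cdot g) + P(f\cdot P(g)) + \la P(f\cdot g)\bigr].
\end{equation*}

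Next, I would similarly expand the three terms on the right-hand side of \meqref{2.7}, each of which equals $Y(a,z)b \otimes P(\text{something})$ by pulling $\hat{P}$ into the second tensor slot. Specifically, they become $Y(a,z)b \otimes P(P(f)\cdot g)$, $Y(a,z)b \otimes P(f \cdot P(g))$, and $\la Y(a,z)b \otimes P(f \cdot g)$ respectively. Summing these matches the expression above term by term, yielding the identity on simple tensors. Since $\hat{Y}$ and $\hat{P}$ are defined by linear extension, the identity \meqref{2.7} for $\hat{P}$ holds on all of $\hat{V}$.

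There is no real obstacle here; the verification is essentially bookkeeping once one observes that both sides of \meqref{2.7} factor cleanly as $Y(a,z)b$ in the $V$-slot and an expression involving only $P$, $\cdot$, $f$, and $g$ in the $A$-slot. The key structural point, which I would note explicitly, is that commutativity of $A$ (built into the hypothesis and used in Lemma~\mref{lm3.19}) ensures that the product of the $A$-components in $\hat{Y}(\hat{P}x, z)\hat{P}y$ is precisely $P(f) \cdot P(g)$, to which the scalar Rota-Baxter identity in $A$ applies directly.
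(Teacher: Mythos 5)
Your proposal is correct and matches the paper's proof: both expand $\hat{Y}(\hat{P}(a\otimes f),z)\hat{P}(b\otimes g)$ and the three terms on the right of \meqref{2.7} on simple tensors, observe that every term has the form $Y(a,z)b$ tensored with an expression in $A$, and then invoke the Rota-Baxter identity $P(f)\cdot P(g)=P(P(f)\cdot g)+P(f\cdot P(g))+\la P(f\cdot g)$ in the second slot. No gap; the linearity remark is routine.
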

\begin{proof}
By \meqref{3.28} and \meqref{3.29}, we have
    \begin{align*}
    \hat{P}\left(\hat{Y}(\hat{P}(a\o f), z)b\o g\right)&=\hat{P}\left( Y(a,z)b\o P(f)\cdot g\right)=Y(a,z)b\o P(P(f)\cdot g),\\
    \hat{P}\left(\hat{Y}(a\o f, z)\hat{P}(b\o g)\right)&=\hat{P}\left(Y(a,z)b\o f\cdot P(g)\right)=Y(a,z)b\o P(f\cdot P(g)),\\
    \la \hat{P}\left(\hat{Y}(a\o f,z) b\o g\right)&=\la \hat{P}\left(Y(a,z)b\o f\cdot g\right) =Y(a,z)b\o \la P(f\cdot g),\\
\hat{Y}\left(\hat{P}(a\o f),z\right)\hat{P}(b\o g)&=\hat{Y}\left(a\o P(f),z \right)b\o P(g)=Y(a,z)b\o P(f)\cdot P(g),
    \end{align*}
    for all $a,b\in V$, and $f,g\in A$. Since $P(f)\cdot P(g)=P(P(f)\cdot g)+P(f\cdot P(g))+\la P(f\cdot g)$, it follows that $\hat{P}:\hat{V}\ra \hat{V}$ is a RBO of the vertex algebra $\hat{V}$ of weight $\la$.
    \end{proof}
\begin{example}
    Recall that the Laurent polynomial ring has a decomposition into sub-algebras: $\C((t))= t^{-1}\C[t^{-1}]\op \C[[t]]$, and the projection operator $P:\C((t))\ra t^{-1}\C[t^{-1}]$ is a RBO of weight $-1$ (see \cite[Example 1.1.10]{G}).

    Then by Proposition~\mref{prop3.20}, for any vertex algebra $(V,Y,\vac)$, the projection map
    $$\hat{P}: V\o \C((t))\ra V\o t^{-1}\C[t^{-1}],\quad \hat{P}(a\o f(t))=a\o P(f(t)),\quad  a\in V, f(t)\in \C((t)),$$
    is a RBO of weight $-1$ on the vertex algebra $(\hat{V}=V\o \C((t)),\hat{Y},\vac\o 1)$ given by Lemma~\mref{lm3.19}, and $\hat{P}$ is translation invariant. In fact, this conclusion also follows from Theorem~\mref{thm2.11} since it is clear that $V\o t^{-1}\C[t^{-1}]$ and $V\o \C[[t]]$ are vertex Leibniz sub-algebras of $V\o \C((t))$.

We can similarly build examples of RBVAs from some of
the examples of commutative unital Rota-Baxter algebras
such as in~\mcite{G}.
    \end{example}


Although it is generally not easy to classify all the Rota-Baxter operators on a given VOA, we can determine all the homogeneous Rota-Baxter operators of non-positive degree on certain CFT-type VOAs (as defined in Definition~\mref{df2.2}).

\begin{lm}\mlabel{lm2.14}
Let $(V,Y,\vac,\om)$ be a VOA of CFT-type, and $P:V\ra V$ be a homogeneous RBO of weight $\lambda\neq 0$
and degree $N\leq 0$. Then we have $P(\vac)=0$ or $P(\vac)=-\la \vac$. Furthermore, $P^{2}+\la P=0$.
    \end{lm}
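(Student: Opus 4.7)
The plan is to exploit the CFT-type hypothesis to pin down $P(\vac)$ to a one-parameter family in $\C\vac$, then extract the two stated possibilities from the RBO identity \meqref{2.7} applied at $a=b=\vac$, and finally derive $P^{2}+\la P=0$ by specializing the RBO identity at $b=\vac$ and taking the constant term in $z$.

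First I would handle $P(\vac)$. Since $V$ is CFT-type, $V_{0}=\C\vac$ and $V_{n}=0$ for $n<0$. Homogeneity of degree $N\leq 0$ forces $P(\vac)\in V_{N}$, so $P(\vac)=0$ when $N<0$, while $N=0$ gives $P(\vac)=c\vac$ for some $c\in\C$. In either case, write $P(\vac)=c\vac$ with $c\in\{0\}$ or $c\in\C$, and substitute $a=b=\vac$ into the defining identity $Y(Pa,z)Pb=P(Y(Pa,z)b)+P(Y(a,z)Pb)+\la P(Y(a,z)b)$. Using the vacuum property $Y(\vac,z)=\Id_{V}$ and $Y(c\vac,z)=c\Id_{V}$, every term reduces to a scalar multiple of $\vac$, and the identity collapses to the scalar equation $c^{2}=c^{2}+c^{2}+\la c$, i.e., $c(c+\la)=0$. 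Since $\la\neq 0$, this yields $c=0$ or $c=-\la$, proving the first assertion.

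For the identity $P^{2}+\la P=0$, I would next specialize $b=\vac$ in \meqref{2.7}. Using $Y(\vac,z)=\Id_{V}$ and the creation property $Y(u,z)\vac=e^{zD}u\in V[[z]]$, the identity becomes
\begin{equation*}
Y(Pa,z)P\vac=P(e^{zD}Pa)+P(Y(a,z)P\vac)+\la P(e^{zD}a), \quad a\in V.
\end{equation*}
In the case $P\vac=0$, the left-hand side and the middle term on the right vanish, leaving $0=P(e^{zD}Pa)+\la P(e^{zD}a)$; setting $z=0$ gives $P^{2}a+\la Pa=0$. In the case $P\vac=-\la\vac$, the left-hand side becomes $-\la e^{zD}Pa$, the middle term equals $-\la P(e^{zD}a)$, and this cancels the $\la$-weighted term, leaving $-\la e^{zD}Pa=P(e^{zD}Pa)$; setting $z=0$ gives $-\la Pa=P^{2}a$. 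Either way, $P^{2}+\la P=0$.

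I expect no serious obstacle: the argument is almost formal once one recognizes that CFT-type together with $N\le 0$ forces $P(\vac)\in\C\vac$, after which \meqref{2.7} evaluated on the vacuum is simultaneously the quadratic equation constraining $c$ and (via the creation property) the operator equation constraining $P^{2}$. The only minor care needed is to keep the two sub-cases for $c$ separate when extracting the constant term in $z$, so as to confirm that both possibilities lead to the same conclusion $P^{2}+\la P=0$.
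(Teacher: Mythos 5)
Your proposal is correct and follows essentially the same route as the paper: CFT-type plus degree $N\le 0$ forces $P(\vac)\in\C\vac$, the RBO identity at $a=b=\vac$ gives the quadratic constraint $\mu(\mu+\la)=0$, and specializing $b=\vac$ (the paper works with the $-1$-modes $\vac_{-1}\vac=\vac$, $a_{-1}\vac=a$, while you take the constant term of the full series via the creation property) yields $P^{2}+\la P=0$ in both cases. The two arguments differ only in this cosmetic choice of mode versus constant term.
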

\begin{proof}
Since $V_n=0$ for $n<0$, $V_0=\C\vac$, and $PV_{0}\ssq V_{N}$ for some $N\leq 0$, we have $P(\vac)=\mu \vac$ for some $\mu \in \C$. Recall that $\vac_{-1}\vac=\vac$. Then by \meqref{2.7}, we have
\begin{align*}
&P(\vac)_{-1}P(\vac)=P(P(\vac)_{-1}\vac)+P(\vac_{-1}P(\vac))+\la P(\vac_{-1}\vac),&
\end{align*}
yielding $\mu^{2}\vac=\mu^{2}\vac +\mu^{2}\vac+ \la\mu \vac.$
Hence $\mu$ is either $0$ or $-\la$. i.e., $P(\vac)=0$ or $-\la \vac$. Furthermore, again by \meqref{2.7} we have
        \begin{equation}\mlabel{2.14b}
P(a)_{-1}P(\vac)=P(P(a)_{-1}\vac)+P(a_{-1}P(\vac))+\la P(a_{-1}\vac), \quad a\in V.
        \end{equation}
If $P(\vac)=0$, then \meqref{2.14b} becomes
        $0=P(P(a))+P(a_{-1}0)+\la P(a),$
        and so $P^{2}(a)+\la P(a)=0$ for all $a\in V$. On the other hand, if $P(\vac)=-\la \vac$, then
        $-\la P(a)=P(P(a))-\la P(a)+\la P(a),$
        which also implies $P^2(a)+\la P(a)=0$, for all $a\in V$. Therefore, $P^{2}+\la P=0$.
    \end{proof}

\begin{prop}\mlabel{prop2.16}
Let $(V,Y,\vac,\om)$ be a VOA of CFT-type, and $P:V\ra V$ be a homogeneous RBO of weight $\lambda\neq 0$ and degree $N\leq 0$. Then $V=V^{1}\op V^{2}$, where $V^{1}$ and $V^{2}$ are graded vertex Leibniz subalgebras of $V$, with $V_{n}=V^{1}_{n}\op V^{2}_{n}$ for each $n\in \N$, and
$$P: V\ra V^{1}, a^{1}+a^{2}\mapsto -\la a^{1}, \quad a^{i}\in V^{i}, i=1,2.$$
Moreover, we have $P(\vac)=0$ if and only if  $V^1_0=0$, and  $P(\vac)=-\la \vac$ if and only if $V^2_0=0$.
\end{prop}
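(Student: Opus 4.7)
The plan is to reduce the claim to an application of Theorem~\mref{thm2.11} combined with Lemma~\mref{lm2.14}. The starting point is the identity $P^{2}+\la P=0$ established in Lemma~\mref{lm2.14}. Since $\la\neq 0$, Proposition~\mref{prop2.9}\meqref{it:locrescale} says that $Q:=-\la^{-1}P$ is an ordinary RBO of weight $-1$, and the relation $P^{2}=-\la P$ rewrites as $Q^{2}=Q$. Thus $Q$ is an idempotent RBO of weight $-1$, so Theorem~\mref{thm2.11} yields a direct sum decomposition $V=V^{1}\op V^{2}$ into vertex Leibniz subalgebras, with $V^{1}=Q(V)=P(V)$, $V^{2}=\ker Q=(\Id-Q)(V)$, and $Q$ acting as the projection onto $V^{1}$ along $V^{2}$. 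Unwinding $P=-\la Q$ then gives $P(a^{1}+a^{2})=-\la a^{1}$ for $a^{i}\in V^{i}$, which is the main formula.

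Next I would verify the grading refinement $V_{n}=V^{1}_{n}\op V^{2}_{n}$ using that $P$, and hence $Q$, is homogeneous of degree $N\leq 0$. In the case $N=0$, $Q$ preserves the grading, so $V^{1}=Q(V)$ and $V^{2}=\ker Q$ are automatically graded subspaces with $V_{n}=V^{1}_{n}\op V^{2}_{n}$ for every $n\in\N$. In the remaining case $N<0$, I would note that for any $v\in V_{n}$ the idempotency $Q(v)=Q^{2}(v)$ forces an element of $V_{n+N}$ to equal an element of $V_{n+2N}$; since $n+N\neq n+2N$ these graded pieces intersect trivially, so $Q=0$ and hence $P=0$. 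In that degenerate situation the trivial decomposition $V^{1}=0$, $V^{2}=V$ fulfills all the requirements.

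For the final pair of equivalences, Lemma~\mref{lm2.14} gives $P(\vac)\in\{0,-\la\vac\}$, equivalently $Q(\vac)\in\{0,\vac\}$. Because $V$ is of CFT-type one has $V_{0}=\C\vac$, and from the graded decomposition $V_{0}=V^{1}_{0}\op V^{2}_{0}$ the vacuum lies entirely in exactly one of the two summands. If $\vac\in V^{2}_{0}$, then $V^{1}_{0}=0$ and $Q(\vac)=0$, so $P(\vac)=0$; conversely $P(\vac)=0$ forces $\vac\in\ker Q=V^{2}$, hence $V^{1}_{0}=0$. Symmetrically, $\vac\in V^{1}_{0}$ is equivalent to $V^{2}_{0}=0$ and to $Q(\vac)=\vac$, i.e., $P(\vac)=-\la\vac$. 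This establishes both equivalences.

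The only place where any care is required is the homogeneity/degree bookkeeping in the second paragraph: the hypothesis $N\leq 0$ might suggest new examples with $N<0$, but the short degree count above shows such $P$ must vanish. Thus the substance of the proposition lies entirely in the level-preserving case $N=0$, where the argument is a clean composition of Lemma~\mref{lm2.14} (idempotency after rescaling) with Theorem~\mref{thm2.11} (classification of idempotent RBOs of weight $-1$).
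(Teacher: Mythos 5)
Your proposal is correct and follows essentially the same route as the paper: Lemma~\mref{lm2.14} gives $P^{2}+\la P=0$, Proposition~\mref{prop2.9} rescales $-P/\la$ to an idempotent RBO of weight $-1$, and Theorem~\mref{thm2.11} supplies the decomposition, with the grading and the vacuum dichotomy handled exactly as in the paper's argument. Your extra observation that homogeneity with $N<0$ forces $Q=0$ (hence $P=0$) is a valid sharpening that the paper's graded bookkeeping only treats implicitly, but it does not change the substance of the proof.
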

\begin{proof}
Since $P^{2}+\la P=0$ by Lemma \mref{lm2.14}, and $\la\neq 0$ by assumption, the RBO $-P/\la$ is an idempotent. Then by Proposition~\mref{prop2.9} $-P/\la$ is an RBO on $V$ of weight $-1$. By Theorem \mref{thm2.11}, we have $V=V^{1}\op V^{2}$, where $V^{1}=(-P/\la)(V)=P(V)$ and $V^{2}=\ker (-P/\la)=\ker P$ are vertex Leibniz subalgebras, and $-P/\la$ is the projection
$$-\frac{P}{\la}: V\ra V^{1}, a^{1}+a^{2}\mapsto a^{1}.$$
Hence $P(a^{1}+a^{2})=-\la a^{1}$. Moreover, since $P(V_n)\subset V_{n+N}$ for all $n\in \N$, we have $V^1=PV=\bigoplus_{m=-N}^\infty P(V_m)=\bigoplus_{n=0}^\infty V^1_n$, and $V^2=\bigoplus_{m=-N}^\infty \ker (P|_{V_m})=\bigoplus_{n=0}^\infty V^2_n$, where $V^{i}_n, i=1,2,$ are eigenspaces of $L(0)$ of eigenvalue $n$, and $V_n=V^{1}_n\op V^2_n$ for each $n\in \N$. Now the last statement is also clear as $V^1_0\op V^2_0=V_0=\C\vac$.
\end{proof}
\begin{coro}
Let $V$ be the level one Heisenberg VOA $M_{\hat{\mathfrak{h}}}(1,0)$ associated with $\h=\C\al$ or the Virasoro VOA $L(c,0)$ (see \mcite{FZ}), and let $P:V\ra V$ be a homogeneous RBO of degree $N\leq 0$ and weight $\la\neq 0$. Then $P$ is either $0$ or $-\la \Id_{V}$.
\end{coro}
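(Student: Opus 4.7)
The plan is to invoke Proposition~\mref{prop2.16} to reduce the corollary to showing that, for these two particular VOAs, no nontrivial decomposition into a direct sum of graded vertex Leibniz subalgebras exists. Since $P$ is homogeneous of weight $\la\neq 0$ and degree $N\leq 0$, Proposition~\mref{prop2.16} yields a graded splitting $V=V^1\op V^2$ with $V_n=V^1_n\op V^2_n$ for every $n\in\N$ into vertex Leibniz subalgebras, and $P$ is $-\la$ times the projection onto $V^1$ along $V^2$. Thus it suffices to prove that either $V^1=0$ or $V^2=0$, which would yield respectively $P=0$ or $P=-\la\,\Id_V$.

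For the Heisenberg VOA $V=M_{\hat{\h}}(1,0)$ with $\h=\C\al$, the graded piece $V_1=\C\,\al(-1)\vac$ is one-dimensional, so $\al(-1)\vac$ lies in exactly one of $V^1, V^2$; say $\al(-1)\vac\in V^i$. I would then exploit $Y(\al(-1)\vac,z)=\al(z)=\sum_{n\in\Z}\al(n)z^{-n-1}$ together with closure of $V^i$ under each component $(\al(-1)\vac)_n=\al(n)$. The computation $\al(1)\al(-1)\vac=(\al\,|\,\al)\vac$ is a nonzero multiple of $\vac$ by nondegeneracy of $(\cdot\,|\,\cdot)$, placing $\vac\in V^i$. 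Applying the modes $\al(-k)$ for $k\geq 1$ iteratively to elements already known to lie in $V^i$ then produces every PBW monomial $\al(-n_1)\cdots\al(-n_r)\vac$, so that $V^i=V$ and the complementary summand vanishes.

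For the Virasoro VOA $V=L(c,0)$ the case $c=0$ is degenerate with $L(0,0)=\C\vac$, so assume $c\neq 0$. Then $V_1=0$ and $V_2=\C\om$, and the same dichotomy forces $\om\in V^i$ for some $i$. Writing $Y(\om,z)=\sum_{n\in\Z}L(n)z^{-n-2}$, closure of $V^i$ under components gives $L(n)V^i\subseteq V^i$ for every $n\in\Z$. The computation $L(2)\om=\tfrac{c}{2}\vac\in V^i$ places $\vac\in V^i$, and since $L(c,0)$ is a cyclic module over the Virasoro algebra generated by $\vac$, closure under all $L(n)$ forces $V^i=V$; again the complementary summand vanishes.

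The main technical obstacle in both cases is verifying that a single low-degree generator (either $\al(-1)\vac$ or $\om$) really does generate the whole VOA under the restricted vertex Leibniz operations $a_nb$ with \emph{both} arguments taken from the subalgebra, rather than under the full vertex operator action by arbitrary elements of $V$. This rests on the observation that the defining modes of the relevant infinite-dimensional Lie algebra---the Heisenberg algebra $\hat{\h}$ or the Virasoro algebra---appear as the components of the vertex operator attached to the generator itself, after which the generation argument reduces to a routine PBW-style induction combined with the standard fact that these VOAs are cyclic modules over their defining Lie algebras with cyclic vector $\vac$.
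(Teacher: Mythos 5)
Your proposal is correct and takes essentially the same route as the paper: both apply Proposition~\mref{prop2.16} to reduce the claim to showing that one summand of the graded splitting $V=V^1\op V^2$ must be all of $V$, and both use that each VOA is generated by a single homogeneous element ($\al(-1)\vac$, resp.\ $\om$) lying in a one-dimensional graded piece, hence contained in $V^1$ or $V^2$. You merely make explicit the generation step (via $\al(1)\al(-1)\vac=(\al|\al)\vac$, $L(2)\om=\tfrac{c}{2}\vac$ and a PBW-style induction) that the paper asserts without detail, and the degenerate case $c=0$ you set aside is trivially covered since then $V=\C\vac$.
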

\begin{proof}
Let $V=M_{\hat{\mathfrak{h}}}(k,0)$ or $L(c,0)$. By Proposition \mref{prop2.16}, $V=V^1\op V^2$ for some graded vertex Leibniz subalgebras $V^1,V^2\subset V$. But $V$ is generated by a single homogeneous element: $V=M_{\hat{\mathfrak{h}}}(k,0)$ is generated as a vertex algebra by $\al(-1)\vac\in V_1$ and $V_1=\C \al(-1)\vac=V^1_1\op V^2_1$, and $V=L(c,0)$ is generated by $\om \in V_{2}=\C\om =V^1_2\op V^2_2$. Then the single generator $u$ of $V$ is contained in either $V^1$ or $V^2$ for both cases. If $u\in V^1$, then $V=V^1$ and $P=-\la \Id_V$; if $u\in V^2$ then $V=V^2$ and $P=0$.
    \end{proof}

    \begin{df}
        Let $(V,Y,\vac,P)$ be an RBVA of weight $\la$. Define a new linear operator $Y^{\star_P}:V\ra (\End V)[[z,z^{-1}]]$ as follows:
        \begin{equation}\mlabel{2.16}
            Y^{\star_P}(a,z)b:=Y(a,z)Pb+Y(Pa,z)b+\lambda Y(a,z)b, \quad a,b\in V.
        \end{equation}
    \end{df}
    Note that $Y^{\star_P} $ is a generalization of $Y_{P}$ in \meqref{2.9}.
    \begin{lm} \mlabel{lm2.19}
The operator $Y^{\star_P} $ satisfies the truncation property and the skew-symmetry \meqref{2.5}. If, in addition, $P$ is translation invariant ($DP=PD$), then $Y^{\star_P} $ also satisfies the $D$-derivative property \meqref{2.3} and the $D$-bracket derivative property \meqref{2.4}.
    \end{lm}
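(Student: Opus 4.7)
The plan is to verify each of the four properties termwise, using the definition
\[
Y^{\star_P}(a,z)b = Y(a,z)Pb + Y(Pa,z)b + \lambda\, Y(a,z)b
\]
and the corresponding known properties of the underlying vertex operator $Y$. Since $Y^{\star_P}$ is a $\C$-linear combination of expressions of the form $Y(x,z)y$ with $x,y\in V$, each of which already satisfies the truncation property, the truncation property for $Y^{\star_P}$ follows at once: for fixed $a,b$, each of $Y(a,z)Pb$, $Y(Pa,z)b$ and $Y(a,z)b$ lies in $V((z))$, hence so does their sum.

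For the skew-symmetry, I would apply \meqref{2.5} to each of the three summands individually:
\begin{align*}
Y^{\star_P}(a,z)b
&= e^{zD}Y(Pb,-z)a + e^{zD}Y(b,-z)Pa + \lambda\, e^{zD}Y(b,-z)a\\
&= e^{zD}\bigl(Y(Pb,-z)a + Y(b,-z)Pa + \lambda\, Y(b,-z)a\bigr)\\
&= e^{zD}\,Y^{\star_P}(b,-z)a.
\end{align*}
Here the only conceptual content is that the pieces of $Y^{\star_P}$ are arranged symmetrically in $(a,Pb)$ versus $(Pa,b)$, so swapping $a\leftrightarrow b$ rotates the two ``mixed'' terms into each other, which is precisely why no hypothesis on $P$ is required at this step.

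For the $D$-derivative and $D$-bracket derivative properties, this is where the hypothesis $PD=DP$ becomes essential, and is the only nontrivial point. Assuming translation invariance, the key observation is $PDa = DPa$, so that
\[
Y(PDa,z) = Y(DPa,z) = \tfrac{d}{dz} Y(Pa,z)
\]
by the $D$-derivative property \meqref{2.3} applied to $Pa$. Combined with the same property applied to $a$ in the remaining two terms, one obtains
\[
Y^{\star_P}(Da,z) = \tfrac{d}{dz}Y(a,z)Pb\big|_{\cdot} + \tfrac{d}{dz}Y(Pa,z) + \lambda\,\tfrac{d}{dz}Y(a,z) = \tfrac{d}{dz}Y^{\star_P}(a,z).
\]

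For the $D$-bracket derivative property I would compute $[D,Y^{\star_P}(a,z)]b$ by expanding the three pieces and repeatedly applying \meqref{2.4} in the form $DY(x,z)y = Y(x,z)Dy + \tfrac{d}{dz}Y(x,z)y$. The terms of the form $Y(a,z)DPb$ and $Y(a,z)PDb$ cancel because $DP=PD$, and similarly the $Y(Pa,z)Db$ and $\lambda Y(a,z)Db$ terms cancel against the corresponding terms coming from $Y^{\star_P}(a,z)Db$; what remains is exactly $\tfrac{d}{dz}Y^{\star_P}(a,z)b$. The whole argument is essentially bookkeeping, and the only actual obstacle is recognizing that translation invariance of $P$ is precisely the hypothesis that makes the $D$'s and $P$'s commute through each other so that these cancellations occur; without $PD=DP$ one would pick up residual terms of the form $Y(a,z)(DP-PD)b$ and $Y((PD-DP)a,z)b$ that cannot be absorbed.
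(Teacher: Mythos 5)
Your proposal is correct and follows essentially the same route as the paper's proof: termwise verification of truncation and skew-symmetry using linearity and the corresponding properties of $Y$, and then using $PD=DP$ exactly where the middle term $Y(PDa,z)b$ and the cancellation of $Y(a,z)DPb$ against $Y(a,z)PDb$ require it. The only quibble is the cosmetic notation slip in your $D$-derivative display (mixing operator and applied forms, e.g.\ the stray $Pb\big|_{\cdot}$), which does not affect the argument.
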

    \begin{proof}
Given $a,b\in V$, since $Y(a,z)Pb $, $Y(Pa,z)b$ and $\lambda Y(a,z)b$ are all truncated from below, we have $Y^{\star_P}(a,z)b\in V((z))$. Moreover, by \meqref{2.16} and the skew-symmetry of $Y$, we have
$$  Y^{\star_P}(a,z)b=e^{zD}Y(Pb,-z)a+e^{zD}Y(b,-z)Pa+\lambda e^{zD}Y(b,-z)a=   e^{zD}Y^{\star_P}(b,-z)a.$$
Hence $Y^{\star_P}$ also satisfies the skew-symmetry. Now assume that $DP=PD$. By \meqref{2.16}, \meqref{2.3} and \meqref{2.4} for $Y$, we have
        \begin{align*}
Y^{\star_P}(Da,z)b&=Y(Da,z)Pb+Y(PDa,z)b+\lambda Y(Da,z)b\\
&=\frac{d}{dz} Y(a,z)Pb+Y(DPa,z)b+\la \frac{d}{dz} Y(a,z)b=\frac{d}{dz} Y^{\star_P}(a,z)b,\\
[D, Y^{\star_P}(a,z)]b&=DY(a,z)Pb-Y(a,z)PDb+[D,Y(Pa,z)]b+\la [D,Y(a,z)]b\\
&=[D,Y(a,z)]Pb+[D,Y(Pa,z)]b+\la [D,Y(a,z)]b=            \frac{d}{dz} Y^{\star_P}(a,z)b.
        \end{align*}
Hence $Y^{\star _P}$ satisfies the $D$-derivative and $D$-bracket derivative properties.
\end{proof}

The next theorem is the vertex algebra analog of the derived
product structure of RBOs, first discovered for Lie algebras by
Semenov-Tian-Shansky~\mcite{S}. See \cite[Theorem~1.1.17]{G} for
associative algebras. For vertex algebras, it shows that
$Y^{\star_P}$ gives a new structure of a vertex Leibniz
algebra (see Definition \mref{df2.10}) or a vertex
algebra without vacuum (see Definition \mref{df2.5}) on an RBVA
$(V,Y,\vac,P)$.
    \begin{thm}\mlabel{thm2.18}
Let $(V,Y,\vac,P)$ be an RBVA of weight $\la$, and $Y^{\star_P}$ be given by \meqref{2.16}.  Then we have
        \begin{enumerate}
\item$P(Y^{\star_{P}}(a,z)b)=Y(Pa,z)Pb$, for all $a,b\in V$.
\mlabel{it:double1}
\item $(V,Y^{\star_{P}})$ is a vertex Leibniz algebra. If, furthermore, $P$ is translation invariant, then $(V,Y^{\star_{P}}, D)$ is a vertex algebra without vacuum.
\mlabel{it:double2}
\item $P$ is an RBO of weight $\la $ on the vertex Leibniz algebra $(V,Y^{\star_P})$.
\mlabel{it:double3}
        \end{enumerate}
    \end{thm}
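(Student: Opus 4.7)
The plan is to address the three parts separately, using (i) as a stepping stone for both (ii) and (iii). For (i), I would apply $P$ to both sides of the defining equation \meqref{2.16} and invoke the RBO identity \meqref{2.7}: the three terms $P(Y(a,z)Pb)+P(Y(Pa,z)b)+\la P(Y(a,z)b)$ collapse directly into $Y(Pa,z)Pb$.

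For (iii), the strategy is to substitute $Pa,Pb$ into the definition of $Y^{\star_P}$ and match term by term against the expansion of the RBO right-hand side computed via (i). The substitution gives $Y^{\star_P}(Pa,z)Pb=Y(Pa,z)P^2b+Y(P^2a,z)Pb+\la Y(Pa,z)Pb$, while (i) yields $P(Y^{\star_P}(Pa,z)b)=Y(P^2a,z)Pb$, $P(Y^{\star_P}(a,z)Pb)=Y(Pa,z)P^2b$, and $\la P(Y^{\star_P}(a,z)b)=\la Y(Pa,z)Pb$. Summing matches the substitution.

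Part (ii) is the heart of the proof. Truncation for $Y^{\star_P}$ is already covered by Lemma~\mref{lm2.19}, so the real task is the Jacobi identity. My key idea is to use (i) to eliminate the problematic $P$-sandwich that appears when we expand $Y^{\star_P}(a,z_1)Y^{\star_P}(b,z_2)u$: the term $Y(a,z_1)\circ P\circ Y^{\star_P}(b,z_2)u$ simplifies via (i) to $Y(a,z_1)Y(Pb,z_2)Pu$. Expanding the remaining occurrences of $Y^{\star_P}$ via \meqref{2.16}, the whole composition $Y^{\star_P}(a,z_1)Y^{\star_P}(b,z_2)u$ becomes a sum of seven terms, each of the form $(\text{scalar})\,Y(x,z_1)Y(y,z_2)\bigl(\text{either }u\text{ or }Pu\bigr)$ with $x\in\{a,Pa\}$ and $y\in\{b,Pb\}$. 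The analogous expansion of $Y^{\star_P}(b,z_2)Y^{\star_P}(a,z_1)u$ yields seven matching terms, so pairing them and applying Jacobi for $Y$ to each pair converts the LHS of Jacobi for $Y^{\star_P}$ into a seven-term sum supported on $z_2^{-1}\delta((z_1-z_0)/z_2)$. For the RHS, I would expand $Y^{\star_P}(Y^{\star_P}(a,z_0)b,z_2)u$ the same way, using (i) once more to simplify $Y(P(Y^{\star_P}(a,z_0)b),z_2)u=Y(Y(Pa,z_0)Pb,z_2)u$, and check that it matches the same seven-term expression.

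When $P$ is translation invariant, Lemma~\mref{lm2.19} further supplies the $D$-derivative and skew-symmetry for $Y^{\star_P}$, so combined with truncation and the Jacobi identity just established, Definition~\mref{df2.5} is fulfilled and $(V,Y^{\star_P},D)$ is a vertex algebra without vacuum. The main obstacle in the whole argument is the bookkeeping in (ii): the naive expansion produces nine terms per side with $P$ sandwiched between two vertex operators, which cannot be matched directly by any single application of Jacobi for $Y$. Once one recognizes that (i) collapses every $P$-sandwich into a plain composition $Y(x,z_i)Y(y,z_j)\circ P^{\epsilon}$, the problem reduces to seven clean pairings, and only standard Jacobi manipulations remain.
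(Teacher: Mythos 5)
Your proposal is correct, and parts (i) and (iii) coincide with the paper's argument: (i) is the RBO identity \meqref{2.7} read backwards, and (iii) is exactly the three-fold application of (i) that the paper performs. For part (ii) the core mechanism is also the same as the paper's — expand each composition of $Y^{\star_P}$'s into nine terms and use (i) (equivalently \meqref{2.6}) to collapse the $P$-sandwich $Y(x,z_i)P(Y^{\star_P}(y,z_j)c)$ into $Y(x,z_i)Y(Py,z_j)Pc$, leaving seven clean terms per side — but the final bookkeeping differs. The paper matches the seven pairs against the \emph{weak commutativity} of $Y$ with a common $N$ (leaving weak associativity ``similar'') and then invokes the equivalence of Jacobi with weak commutativity plus weak associativity (Theorem~\mref{thm2.2}); you instead apply the full Jacobi identity of $Y$ to each of the seven triples $(x,y,w)$ with $x\in\{a,Pa\}$, $y\in\{b,Pb\}$, $w\in\{u,Pu\}$, and verify that the resulting seven terms on the $z_2^{-1}\delta\left(\frac{z_1-z_0}{z_2}\right)$ side agree with the expansion of $Y^{\star_P}(Y^{\star_P}(a,z_0)b,z_2)u$, again using (i) to rewrite $Y(P(Y^{\star_P}(a,z_0)b),z_2)u=Y(Y(Pa,z_0)Pb,z_2)u$. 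This direct route is slightly cleaner: it avoids choosing a common $N$, avoids the appeal to Theorem~\mref{thm2.2}, and treats both halves of the Jacobi identity in one stroke, at the cost of verifying the iterate expansion explicitly (which you did, and it does match term by term). The use of Lemma~\mref{lm2.19} for truncation, skew-symmetry and the $D$-derivative properties in the translation-invariant case is the same as in the paper.
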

\begin{proof}
(\mref{it:double1}) By \meqref{2.6} and \meqref{2.16}, we have
\begin{align*}
&Y(Pa,z)Pb=P(Y(a,z)Pb)+P(Y(Pa,z)b)+\lambda P(Y(a,z)b)\\
&=P(Y(a,z)Pb+Y(Pa,z)b+\lambda Y(a,z)b)\\
&=P(Y^{\star_{P}}(a,z)b).
\end{align*}

\noindent
(\mref{it:double2}) By Lemma \mref{lm2.19}, to show that $(V,Y^{\star_{P}})$ is a vertex Leibniz algebra, we only need to show that $Y^{\star_P}$ satisfies the Jacobi identity, or equivalently, the weak commutativity and weak associativity, in view of Theorem \mref{thm2.3}.

We only prove the weak commutativity. The proof of the weak associativity is similar.  Given $a,b,c\in V$, we need to find some $N\in \N$ (depending on $a$ and $b$) such that
        \begin{equation}\mlabel{2.18}
(z_1-z_2)^NY^{\star_{P}}(a,z_1)Y^{\star_{P}}(b,z_2)c=(z_1-z_2)^NY^{\star_{P}}(b,z_2)Y^{\star_{P}}(a,z_1)c.
        \end{equation}
We again apply \meqref{2.16} to expand $Y^{\star_{P}}(a,z_1)Y^{\star_{P}}(b,z_2)c$ and $Y^{\star_{P}}(b,z_2)Y^{\star_{P}}(a,z_1)c$ as follows.
        \begin{align*}
&Y^{\star_{P}}(a,z_1)Y^{\star_{P}}(b,z_2)c\\
&=Y(a,z_1)P(Y(b,z_2)Pc)+Y(Pa,z_1)Y(b,z_2)Pc+\lambda Y(a,z_1)Y(b,z_2)Pc\\
&\ \ +Y(a,z_1)P(Y(Pb,z_2)c)+Y(Pa,z_1)Y(Pb,z_2)c+\lambda Y(a,z_1)Y(Pb,z_2)c\\
&\ \ +\lambda Y(a,z_1)P(Y(b,z_2)c)+\lambda Y(Pa,z_1)Y(b,z_2)c+\lambda ^2Y(a,z_1)Y(b,z_2)c;\\
&Y^{\star_{P}}(b,z_2)Y^{\star_{P}}(a,z_1)\\
&=Y(b,z_2)P(Y(a,z_1)Pc)+Y(Pb,z_2)Y(a,z_1)Pc+\lambda Y(b,z_2)Y(a,z_1)Pc\\
            &\ \ +Y(b,z_2)P(Y(Pa,z_1)c)+Y(Pb,z_2)Y(Pa,z_1)c+\lambda Y(b,z_2)Y(Pa,z_1)c\\
            &\ \ +\lambda Y(b,z_2)P(Y(a,z_1)c)+\lambda Y(Pb,z_2)Y(a,z_1)c+\lambda ^2Y(b,z_2)Y(a,z_1)c.
        \end{align*}
        By \meqref {2.6} again, we have
        \begin{align*}
            &Y(a,z_1)P(Y(b,z_2)Pc)+Y(a,z_1)P(Y(Pb,z_2)c)+\lambda Y(a,z_1)P(Y(b,z_2)c)\\
            &=Y(a,z_1)Y(Pb,z_2)Pc,\\
            &Y(b,z_2)P(Y(a,z_1)Pc)+Y(b,z_2)P(Y(Pa,z_1)c)+\lambda Y(b,z_2)P(Y(a,z_1)c)\\
            &=Y(P(a),z_1)Y(b,z_2)Pc.
        \end{align*}
We can find a common natural number $N\in \N$, depending on $a$ and $b$, that ensures all the following weak commutativities of $Y$:
\begin{align*}
(z_1-z_2)^NY(a,z_1)Y(Pb,z_2)Pc&=(z_1-z_2)^NY(Pb,z_2)Y(a,z_2)Pc,\\
(z_1-z_2)^NY(b,z_2)Y(Pa,z_1)Pc&=(z_1-z_2)^NY(P(a),z_1)Y(b,z_2)Pc,\\
        (z_1-z_2)^NY(a,z_1)Y(b,z_2)Pc&=(z_1-z_2)^NY(b,z_2)Y(a,z_1)Pc,\\
        (z_1-z_2)^NY(Pa,z_1)Y(Pb,z_1)c&=(z_1-z_2)^NY(Pb,z_2)Y(Pa,z_1)c,\\
        (z_1-z_2)^NY(a,z_1)Y(Pb,z_2)c&=(z_1-z_2)^NY(Pb,z_2)Y(a,z_1)c,\\
        (z_1-z_2)^NY(Pa,z_1)Y(b,z_2c&=(z_1-z_2)^NY(b,z_2)Y(Pa,z_1)c,\\
        (z_1-z_2)^NY(a,z_1)Y(b,z_2)c&=(z_1-z_2)^NY(b,z_2)Y(a,z_1)c.
        \end{align*}
        This shows \meqref{2.18} by comparing the expansions. Thus, $(V,Y^{\star_P})$ is a vertex Leibniz algebra. If the RBO $P$ is also translation invariant, then by Lemma \mref{lm2.19}, $(V,Y^{\star_P},D)$ is a vertex algebra without vacuum.

        (\mref{it:double3}) By \meqref{2.16} and \meqref{it:double1},
we have
        \begin{align*}
            Y^{\star_{P}}(Pa,z)Pb&=Y(Pa,z)P(Pb)+Y(P(Pa),z)P(b)+\la Y(Pa,z)Pb\\
            &=P(Y^{\star_{P}}(a,z)Pb)+P(Y^{\star_{P}}(Pa,z)b)+\la P(Y^{\star_{P}}(a,z)b).
        \end{align*}
        By Definition \mref{df2.10}, $P$ is an RBO of weight $\la$ on the vertex Leibniz algebra $(V,Y^{\star_{P}})$.
    \end{proof}

\delete{
We now give the following analog of the additive decomposition of Atkinson~\mcite{At,G}.
 \begin{thm}{\bf (Additive decomposition)}
    Let $(V,Y,1)$ be a vertex algebra over the complex number field $\C$. For $m\in \Z$, a linear operator $P:V\ra V$ is an m-ordinary RBO of weight $\lambda $ on $V$ if and only if for $a,b\in V$, there exists $c\in V$ such that $P(a)_mP(b)=P(c)$ and $\tilde{P}(a)_m\tilde{P}(b)=-\tilde{P}(c)$, where $\tilde{P}=-\lambda id-P$ and $\lambda\neq 0$.
\end{thm}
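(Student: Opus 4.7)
The plan is to mimic Atkinson's classical additive decomposition for Rota-Baxter associative algebras, exploiting the bilinearity of the component operation $(\cdot)_{m}(\cdot)$ coming from the vertex operator. Both directions reduce to a short algebraic calculation after expanding $\tilde{P}(a)=-\lambda a-P(a)$.

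For the forward implication, assume $P$ is an $m$-ordinary RBO of weight $\lambda$. Define
$$c:=a_{m}P(b)+P(a)_{m}b+\lambda a_{m}b.$$
Then the defining identity \meqref{2.6} gives immediately $P(a)_{m}P(b)=P(c)$. Using bilinearity of $(\cdot)_{m}(\cdot)$, I would expand
$$\tilde{P}(a)_{m}\tilde{P}(b)=\bigl(-\lambda a-P(a)\bigr)_{m}\bigl(-\lambda b-P(b)\bigr)=\lambda^{2}a_{m}b+\lambda a_{m}P(b)+\lambda P(a)_{m}b+P(a)_{m}P(b),$$
and substitute $P(a)_{m}P(b)=P(c)$. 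Comparing with $-\tilde{P}(c)=\lambda c+P(c)=\lambda\bigl(a_{m}P(b)+P(a)_{m}b+\lambda a_{m}b\bigr)+P(c)$, the two expressions coincide term by term.

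For the converse, assume that for every pair $a,b\in V$ there exists $c\in V$ such that $P(a)_{m}P(b)=P(c)$ and $\tilde{P}(a)_{m}\tilde{P}(b)=-\tilde{P}(c)$. Expanding $\tilde{P}(a)_{m}\tilde{P}(b)$ as above and substituting the first identity yields
$$\lambda^{2}a_{m}b+\lambda a_{m}P(b)+\lambda P(a)_{m}b+P(c)=\lambda c+P(c),$$
so that
$$\lambda c=\lambda^{2}a_{m}b+\lambda a_{m}P(b)+\lambda P(a)_{m}b.$$
Since $\lambda\neq 0$, this forces $c=a_{m}P(b)+P(a)_{m}b+\lambda a_{m}b$, and then the first identity $P(a)_{m}P(b)=P(c)$ becomes precisely the Rota-Baxter relation \meqref{2.6}. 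Hence $P$ is an $m$-ordinary RBO of weight $\lambda$.

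There is no serious obstacle here: the argument is a transparent componentwise transcription of the classical Atkinson decomposition, and the only place that uses $\lambda\neq 0$ is the division step that recovers the explicit form of $c$ in the reverse direction. The slight subtlety worth flagging, and which I would make explicit in the write-up, is that the bilinearity of $(\cdot)_{m}(\cdot)$ is what permits the algebraic manipulation to proceed exactly as in the associative case, so that no weak commutativity or Jacobi identity is needed.
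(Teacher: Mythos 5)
Your proposal is correct and follows essentially the same route as the paper's argument: choose $c=a_{m}P(b)+P(a)_{m}b+\lambda a_{m}b$ in the forward direction, and in the converse expand $\tilde{P}(a)_{m}\tilde{P}(b)$ bilinearly, use $P+\tilde{P}=-\lambda\,\Id$, and divide by $\lambda\neq 0$ to recover $c$ and hence the Rota-Baxter identity \meqref{2.6}. The only cosmetic difference is that the paper cites the fact that $\tilde{P}$ is again an RBO (Proposition~\mref{prop2.9}) to cover the forward direction, whereas you verify it by the same direct term-by-term expansion.
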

\begin{proof}
    Let $P:V\ra V$ be an $m$-ordinary RBO of weight $\lambda $ on $V$. For $a,b\in V$, we have
    $$Pa_mPb=P(Pa_mb+a_mPb+\lambda a_mb).$$
    Take $c=Pa_mb+a_mPb+\lambda a_mb$. Then it is obvious that $P(a)_mP(b)=P(c)$. Moreover, from the proof of part two in Proposition~\mref{prop2.9},  we have $\tilde{P}(a)_m\tilde{P}(b)=-\tilde{P}(c)$. \par
    Conversely, for given $a,b\in V$, we will show that
    $$P(a)_mP(b)=P(Pa_mb)+P(a_mPb)+\lambda P(a_mb).$$
    Let $c\in V$ be such that $P(a)_mP(b)=P(c)$ and $\tilde{P}(a)_m\tilde{P}(b)=-\tilde{P}(c)$. Since $\tilde{P}=-\lambda \Id-P$, we have
    \begin{align*}
        -\lambda  c&=P(c)+\tilde{P}(c)=P(a)_mP(b)-\tilde{P}(a)_m\tilde{P}(b)\\&=P(a)_mP(b)-(\lambda^2a_mb+\lambda a_mP(b)+\lambda P(a)_mb+P(a)_mP(b))\\&=-\lambda (a_mP(b)+P(a)_mb+\lambda a_mb).
    \end{align*}
    Since $\lambda \neq 0$, we have
    $c=a_mP(b)+P(a)_mb+\lambda a_mb.$
    It follows that $ P(a)_mP(b)=P(c)=P(Pa_mb)+P(a_mPb)+\lambda P(a_mb),$ for all $m\in \Z$.
\end{proof}}

\begin{remark}
	If $\la=0$, then Theorem~\mref{thm2.18} gives an alternative proof to Proposition~\mref{prop3.3}. In particular, if $R=P:V\ra V$ is an RBO of weight $0$, or equivalently, satisfies the ``modified Yang-Baxter equation'', then $Y_R(a,z)b=Y(Ra,z)b+Y(a,z)Rb=Y^{\star_{P}}(a,z)b$ satisfies the Jacobi identity of VOAs, or equivalently, $R=P$ is an $R$-matrix for $V$. 
	\end{remark}

\begin{example}
Let $V=V_L$ be a lattice VOA, and let $P:V_{L}\ra V^1$ be the projection RBO in Example \mref{ex2.12}, where $L=L^1\sqcup L^2$ and $V_L=V^1\op V^2$ as in \meqref{2.12'} and \meqref{2.13'}. For $a=a^1+a^2$ and $b=b^1+b^2$ in $V_L$, where $a^i,b^i\in V^i$ for $i=1,2$, we have
$P(a^1)=a^1, P(b^1)=b^1,$ and $P(a^2)=P(b^2)=0.$
Then by \meqref{2.16}, with $\la=-1$, we have
\begin{align*}
Y^{\star_P}(a,z)b&=Y(a,z)Pb+Y(Pa,z)b- Y(a,z)b\\
&=Y(a^1+a^2)b^1+Y(a^1,z)(b^1+b^2)-Y(a^1+a^2,z)(b^1+b^2)\\
&=Y(a^1,z)b^1-Y(a^2,z)b^2.
\end{align*}
Since $P$ is translation invariant, by Theorem \mref{thm2.18}, $(V_L, Y^{\star_P},L(-1))$ is a vertex algebra without vacuum, with $Y^{\star_P}$ given by
\begin{equation}\mlabel{2.21}
    Y^{\star_P}(a,z)b=Y(a^1,z)b^1-Y(a^2,z)b^2.
\end{equation}
Note that the vacuum element $\vac$ of $V_L$ is contained in $M_{\hat{\h}}(1,0)\subset V^1$, and it cannot be the vacuum element of $(V_L, Y^{\star_P},L(-1))$, since $Y^{\star_P}(\vac,z)a^2=0$ for all $a^2\in V^2$ by \meqref{2.21}.
\end{example}

\section{Dendriform vertex algebras}
\mlabel{sec:dend}




In this section, we study the dendriform structure under the framework of vertex algebras, which has an intimate relation with the Rota-Baxter operators on vertex algebras. Since the usual dendriform axioms are the splitting axioms of associativity, while the key axiom of a vertex algebra is the Jacobi identity, we take the viewpoint that the Jacobi identity is the combination of weak associativity together with the $D$-bracket derivative property and skew-symmetry (cf. \mcite{HL,L3}), and define our dendriform structures gradually. In particular, we show that our definitions preserve the usual properties of a dendriform algebra, and they give rise to some new kinds of Jacobi identities (see Theorem~\mref{thm4.10}). 


\subsection{Dendriform field and vertex algebras}

A \name{dendriform algebra} is a vector space $A$ over a field $k$, equipped with two binary operators $\prec$ and $\succ$, satisfying
    \begin{align}
(x\prec y)\prec z&=x\prec(y\prec z+y\succ z),\mlabel{4.30}\\
(x\succ y)\prec z&=x\succ (y\prec z),\mlabel{4.31}\\
(x\prec y+x\succ y)\succ z&=    x\succ (y\succ z),\mlabel{4.32} \quad x, y, z\in A.
    \end{align}
Given a dendriform algebra $(A, \prec,\succ)$, the product
    \begin{equation}\mlabel{4.32'}
x\star y:=x\prec y+x\succ y, \quad x, y\in A.
    \end{equation}
is associative. Furthermore, Rota-Baxter algebras give rise to dendriform algebras.
For instance, an Rota-Baxter algebra $(R,P)$ of weight $0$ defines a dendriform algebra $(R, \prec_{P},\succ_P)$~\mcite{A1}, where
        \begin{equation}\label{5.4}
            x\prec_P y=xP(y), \qquad x\succ_P y=P(x)y, \quad x, y\in R.
        \end{equation}

The associative analog of vertex algebras is the notion of field
algebras~\mcite{BK}, or the nonlocal vertex
algebras~\mcite{L3}. A \name{field algebra} $(V,Y,\vac,D)$ is a vector space $V$, equipped with a linear map $Y:V\ra (\End V)[[z,z^{-1}]]$, a distinguished vector
$\vac$, and a linear map $D :V\ra V$, satisfying the truncation
property, the vacuum and creation properties, the $D$(-bracket)
derivative properties \meqref{2.3} and \meqref{2.4}, and the weak
associativity \meqref{2.1}.

Since the axioms of a dendriform are extracted from the
associativity axiom, the weak associativity axiom \meqref{2.2} of
a field algebra is enough for our purpose. We introduce the
following notion, which can be viewed as a weaker version of both
field algebras and vertex Leibniz algebras. 
\begin{df}\mlabel{df4.2}
A \name{field Leibniz algebra} is a vector space $V$, equipped with a linear map $Y:V\ra (\End V)[[z,z^{-1}]]$, satisfying the truncation property and the weak associativity \meqref{2.2}. We denote a field Leibniz algebra by $(V,Y)$.

An \name{ordinary Rota-Baxter operator (ordinary RBO)} on a field Leibniz algebra $(V,Y)$ of weight $\la\in \C$ is a linear map $P:V\ra V$, satisfying the compatibility \meqref{2.7}.
\end{df}

    By a similar argument as the proof of Proposition~\mref{prop2.6} (cf. \mcite{HL}), together with Lemma 2.7 in \mcite{LTW}, we can derive the following relation between the field Leibniz algebras and the vertex algebras without vacuum.
\begin{prop}\mlabel{prop4.2}
Let $(V,Y)$ be a field Leibniz algebra. Suppose that there exists a linear map $D:V\ra V$ satisfying the $D$-bracket derivative property \meqref{2.4} and the skew-symmetry \meqref{2.5}. Then $(V,Y,D)$ is a vertex algebra without vacuum.
\end{prop}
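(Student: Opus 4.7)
The goal is to verify the two axioms of a vertex algebra without vacuum (Definition~\mref{df2.5}) that are not already in the hypotheses, namely the Jacobi identity and the $D$-derivative property \meqref{2.3}. The truncation property is part of the field Leibniz structure, and the $D$-bracket derivative property \meqref{2.4} together with the skew-symmetry \meqref{2.5} are given by assumption.

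For the Jacobi identity, I would first invoke Proposition~\mref{prop2.6}: the standing hypotheses on $(V,Y,D)$ are precisely those of that proposition, so its conclusion yields the weak commutativity \meqref{2.1}. Combining this with the weak associativity \meqref{2.2} built into the field Leibniz structure, the equivalence in Theorem~\mref{thm2.2} then upgrades the pair (weak commutativity, weak associativity) under truncation to the full Jacobi identity.

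For the $D$-derivative property $Y(Da,z)=\frac{d}{dz}Y(a,z)$, I would differentiate the skew-symmetry identity $Y(a,z)b=e^{zD}Y(b,-z)a$ with respect to $z$. Using $\frac{d}{dz}e^{zD}=De^{zD}=e^{zD}D$ together with the formal chain rule $\frac{d}{dz}Y(b,-z)=-Y'(b,-z)$, where $Y'(b,w)=\frac{d}{dw}Y(b,w)$, and rewriting $Y'(b,-z)=[D,Y(b,-z)]$ via the $D$-bracket derivative property at the formal variable $w=-z$, the terms involving $D$ acting on $Y(b,-z)a$ combine into a commutator $(De^{zD}-e^{zD}D)Y(b,-z)a=0$. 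What survives is $\frac{d}{dz}(Y(a,z)b)=e^{zD}Y(b,-z)Da$, and applying skew-symmetry once more to $Y(Da,z)b$ identifies the right-hand side with $Y(Da,z)b$, as desired.

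The main point of care will be the sign bookkeeping for $Y(b,-z)$ as a formal Laurent series in $z$ and the interpretation of substituting $w=-z$ into the $D$-bracket derivative property. No genuinely new vertex-algebra input is needed beyond Proposition~\mref{prop2.6}, Theorem~\mref{thm2.2}, and a careful formal-variable computation.
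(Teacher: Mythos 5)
Your proposal is correct and follows the paper's own route: the paper proves this proposition precisely by invoking the argument of Proposition~\ref{prop2.6} (to get weak commutativity, hence the Jacobi identity via Theorem~\ref{thm2.2}) together with Lemma~2.7 of \cite{LTW}, which is exactly your differentiation of the skew-symmetry formula to recover the $D$-derivative property \eqref{2.3} from \eqref{2.4} and \eqref{2.5} (compare the analogous computation in Lemma~\ref{lm4.4}). Your sign bookkeeping is accurate, so nothing further is needed.
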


Recall  the notions of vertex Leibniz algebras and vertex algebras without vacuum in
Definitions~\mref{df2.10} and \mref{df2.5}, respectively. It is clear that we have the following embedding of categories. 
\begin{equation}\mlabel{4.36'}
\text{ vertex\ alg.\ without\ vacuum}\subset \text{ vertex\ Leibniz\ alg.}\subset  \text{ field\ Leibniz\ alg.}
\end{equation}

Inspired by the axioms \meqref{4.30}--\meqref{4.32} of dendriform algebras and the weak-associativity \meqref{2.2}, we expect to decompose the vertex operator $Y(\cdot,z)$ into a sum of two operators: $Y(\cdot,z)=Y_{\prec}(\cdot,z)+Y_{\succ}(\cdot,z),$ whose properties are consistent with both the Rota-Baxter type axiom and the weak associativity axiom. Furthermore, we want the embedding of categories \meqref{4.36'} to be reflected as well. This leads us to the following definition.

\begin{df}\mlabel{df4.3}
Let $V$ be a vector space, and
        \begin{align*}
&Y_{\prec}(\cdot,z):  V\ra \Hom(V,V((z))),\  a\mapsto Y_{\prec}(a,z),\\
&Y_{\succ}(\cdot,z):  V\ra \Hom(V,V((z))),\  a\mapsto Y_{\succ}(a,z)
        \end{align*}
be two linear operators associated with a formal variable $z$. For simplicity, we denote $Y_{\prec}(\cdot,z)$ by $\cdot\prec_z \cdot$, and $Y_{\succ}(\cdot,z)$ by $\cdot\succ\cdot$, respectively, and write $$Y_\prec(a,z)b=a\prec_z b, \quad Y_\succ(a,z)b=a\succ_z b, \quad a, b\in V.$$
\begin{enumerate}
\item A triple $(V,\prec_{z},\succ_{z})$ is called a {\bf dendriform field algebra} if for $a,b,c\in V$, there exists some $N\in \N$ depending on $a$ and $c$, satisfying
\begin{align}
(z_0+z_2)^{N}   (a\prec_{z_0}b)\prec_{z_2}c &=  (z_0+z_2)^{N}a\prec_{z_0+z_2} (b\succ_{z_2}c+b\prec_{z_2}c), \mlabel{5.6}\\
(z_0+z_2)^{N}   (a\succ _{z_0}b)\prec_{z_{2}}c&=    (z_0+z_2)^{N}a\succ_{z_0+z_2}(b\prec_{z_2}c),\mlabel{5.7}\\
(z_0+z_2)^{N}   (a\succ_{z_0}b+a\prec_{z_0}b)\succ_{z_2} c&=    (z_0+z_2)^{N}a\succ_{z_0+z_2}(b\succ_{z_2}c).\mlabel{5.8}
\end{align}
    \item A triple $(V,\prec_z,\succ_z)$ is called a {\bf dendriform vertex Leibniz algebra} if it is a dendriform field algebra and satisfies the following additional conditions: for $a,b,c\in V$, there exists some $N\in \N$ depending on $a$ and $b$, such that
        \begin{align}
    (z_1-z_2)^N a\succ_{z_1}(b\prec_{z_2}c)&=(z_1-z_2)^N b\prec_{z_2}(a\succ_{z_1}c+a\prec_{z_1}c),\mlabel{4.43}\\
    (z_1-z_2)^N a\succ_{z_1}(b\succ_{z_2}c)&=(z_1-z_2)^N b\succ_{z_2}(a\succ_{z_1}c).\mlabel{4.44}
    \end{align}
\item  Let $D:V\ra V$ be a linear map. A quadruple $(V,\prec_{z},\succ_{z},D)$ is called a {\bf dendriform vertex algebra (without vacuum)} if $(V,\prec_{z},\succ_{z})$ is a dendriform field algebra, and $D$, $\prec_{z}$, and $\succ_{z}$ satisfy the following compatibility properties:
            \begin{align}
&e^{zD}(a\prec_{-z} b)=b\succ_{z} a \quad \mathrm{and}\quad     e^{zD}(a\succ_{-z} b)=b\prec_{z} a;\mlabel{4.38}\\
&D(a\prec_{z} b)-a\prec_{z} (Db)=\frac{d}{dz}(a\prec_{z} b)\quad \mathrm{and}\quad D(a\succ_{z} b)-a\succ_{z} (Db)=\frac{d}{dz} (a\succ_{z} b). \mlabel{4.39}
            \end{align}
        \end{enumerate}
\end{df}

\begin{remark}
    By the definition above, any dendriform vertex Leibniz algebra is automatically a dendriform field algebra. Later, we will see that the equations \meqref{5.6}--\meqref{5.8} are the underlying axioms of the weak associativity \meqref{2.2}, while the equations \meqref{4.43} and \meqref{4.44} are the underlying axioms of the weak commutativity \meqref{2.1}.

We will also show that the equations \meqref{5.6}--\meqref{5.8}
are equivalent with equations \meqref{4.43} and \meqref{4.44} if
there exists $D:V\ra V$ satisfying \meqref{4.38} and
\meqref{4.39} (Proposition~\mref{prop4.6}). Thus
any dendriform vertex algebra is also a dendriform vertex Leibniz
algebra (Corollary~\mref{cor4.7}), and we have a similar embedding of
categories as \meqref{4.36'}:
\begin{equation}
\text{dendriform\ vertex\ alg.}\subset\text{dendriform\ vertex\
Leibniz\ alg.}\subset \text{dendriform\ field\  alg}.
\end{equation}
\end{remark}

We can regard the equality \meqref{4.38} as the analog of skew-symmetry \meqref{2.5} satisfied by the partial operators $\prec_z$ and $\succ_z$. The equality \meqref{4.39} can be viewed as the $D$-bracket derivative property \meqref{2.4} for the partial operators. Similar to Lemma 2.7 in \mcite{LTW}, we have the following result.
\begin{lm}\mlabel{lm4.4}
    Let $V$ be a vector space, equipped with two operators $\prec_z,\succ_z: V\times V\ra V((z))$ and a linear operator $D:V\ra V$, satisfying the skew-symmetry \meqref{4.38}. Then the $D$-bracket derivative property \meqref{4.39} is equivalent to the following $D$-derivative property.
    \begin{equation}\mlabel{4.40'}
(Da)\prec_z b=\frac{d}{dz}(a\prec_z b),\quad \quad (Da)\succ_z b=\frac{d}{dz}(a\succ_z b),  \quad a, b\in V.\end{equation}
In particular, a dendriform vertex algebra $(V,\prec_z,\succ_z,D)$ can be defined as a dendriform field algebra $(V,\prec_z, \succ_z)$ that satisfies \meqref{4.38} and \meqref{4.40'}.
\end{lm}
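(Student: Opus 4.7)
The plan is to mimic the proof of Lemma~2.7 in \mcite{LTW}, using the skew-symmetry \meqref{4.38} as the bridge between $\prec_z$ and $\succ_z$, along with the fact that $D$ commutes with $e^{zD}$. Both implications follow from differentiating the skew-symmetry identity in $z$ and comparing the two sides, so the calculation is essentially the same in both directions.

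For the forward direction \meqref{4.39} $\Rightarrow$ \meqref{4.40'}, I would differentiate $e^{zD}(a \prec_{-z} b) = b \succ_z a$ with respect to $z$. Since $[D,e^{zD}]=0$, the left-hand side becomes $D(b\succ_z a)$ minus $e^{zD}$ applied to $\frac{d}{dw}(a\prec_w b)\big|_{w=-z}$, the minus sign arising from the chain rule on the argument $-z$. The right-hand side equals $D(b\succ_z a) - b\succ_z(Da)$ by the hypothesized $D$-bracket derivative property for $\succ_z$. Cancelling $D(b\succ_z a)$ and using skew-symmetry \meqref{4.38} in the form $b\succ_z(Da) = e^{zD}((Da)\prec_{-z} b)$, then stripping off $e^{zD}$ and substituting $-z \mapsto z$, yields $(Da)\prec_z b = \frac{d}{dz}(a\prec_z b)$. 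The analogous identity for $\succ_z$ follows from the second half of \meqref{4.38} by the same computation with $\prec$ and $\succ$ interchanged.

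The converse \meqref{4.40'} $\Rightarrow$ \meqref{4.39} is obtained by reading the above chain of equalities in reverse: differentiate the skew-symmetry identity, use \meqref{4.40'} to rewrite $\frac{d}{dw}(a\prec_w b)$ as $(Da)\prec_w b$, then apply \meqref{4.38} with $Da$ in the first slot to identify $e^{zD}((Da)\prec_{-z} b)$ with $b\succ_z(Da)$; comparing terms gives the $D$-bracket derivative property for $\succ_z$, and the identity for $\prec_z$ is obtained analogously. The last sentence of the lemma --- that a dendriform vertex algebra may be defined equivalently using \meqref{4.40'} in place of \meqref{4.39} --- is then an immediate consequence of the equivalence just established. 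No serious obstacle is expected; the only bookkeeping to watch is the minus sign from differentiating $a\prec_{-z}b$ in $z$ and its interaction with pulling $D$ through $e^{zD}$, which combine consistently to produce the stated identities.
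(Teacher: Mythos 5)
Your proof is correct and follows essentially the same route as the paper's: both arguments use the skew-symmetry \meqref{4.38} to transfer between $\prec_z$ and $\succ_z$, the chain-rule sign from the argument $-z$, and the commutation of $D$ with $e^{zD}$, exactly as in Lemma~2.7 of \mcite{LTW}. The paper merely packages the computation as a single identity expressing $(Da)\prec_z b-\frac{d}{dz}(a\prec_z b)$ as $e^{zD}$ applied to the corresponding defect of \meqref{4.39} for $\succ$ at $-z$, which gives both directions at once, while you differentiate the skew-symmetry relation and argue each implication separately --- the same calculation rearranged.
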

\begin{proof}
Similar to the proof of Lemma 2.7 in \mcite{LTW}, for $a,b\in V$, we have
{\small
    \begin{align*}
    (Da)\prec_z b-\frac{d}{dz}(a\prec_zb)&=e^{zD}b\succ_{-z}Da-De^{zD}(b\succ_za)-e^{zD}\frac{d}{dz}(b\succ_{-z}a)\\
    &=e^{zD}\left(b\succ_{-z}Da-D(b\succ_{-z}a)-\frac{d}{dz}(b\succ_{-z}a)\right),\\
        (Da)\succ_z b-\frac{d}{dz}(a\succ_zb)&=e^{zD}b\prec_{-z}Da-De^{zD}(b\prec_za)-e^{zD}\frac{d}{dz}(b\prec_{-z}a)\\
    &=e^{zD}\left(b\prec_{-z}Da-D(b\prec_{-z}a)-\frac{d}{dz}(b\prec_{-z}a)\right).
    \end{align*}
}
    Thus, \meqref{4.39} is equivalent to \meqref{4.40'}.
    \end{proof}

The axioms of the dendriform algebra are closely related to the properties of Rota-Baxter operators. As their vertex algebra analogs, the following theorem shows that an RBVA of weight $\la$ gives rise to a dendriform field algebra, and an RBVA of weight $0$ gives rise to a dendriform vertex algebra.

    \begin{thm}\mlabel{thm5.3}
Let $(V,Y,\vac, P)$  be an RBVA of weight $\la$.
\begin{enumerate}
\item Let $\lambda$ be arbitrary. Then $(V,Y,\vac, P)$ defines a dendriform field algebra   $(V,\prec'_z, \succ'_z)$, where
\begin{equation}\mlabel{5.10}
a\prec'_z b=Y(a,z)P(b)+\lambda Y(a,z)b, \qquad a\succ'_z
b=Y(P(a),z)b,\quad a,b\in V.
\end{equation}
\mlabel{it:rbd2}
\item If $\la=0$, then $(V,Y,\vac, P)$ defines a dendriform vertex Leibniz algebra $(V,\prec_z, \succ_z)$, where
\begin{equation}\mlabel{5.9}
    a\prec_z b:=Y(a,z)P(b), \qquad a\succ_z b:=Y(P(a),z)b, \quad a, b\in V.
\end{equation}
If $P$ is also translation invariant, then $(V,\prec_{z},\succ_{z},D)$ is  a dendriform vertex algebra.
\mlabel{it:rbd1}
\end{enumerate}
    \end{thm}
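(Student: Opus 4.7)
The plan is to verify each family of axioms in Definition~\mref{df4.3} by unfolding \meqref{5.10} (resp. \meqref{5.9}) and then reducing everything to the Rota-Baxter identity \meqref{2.7} together with the weak associativity \meqref{2.2}, weak commutativity \meqref{2.1} and skew-symmetry \meqref{2.5} of $Y$.

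For part \mref{it:rbd2}, I would begin with \meqref{5.8}, which is cleanest: after substituting \meqref{5.10}, the left-hand side reads $Y\bigl(Y(Pa,z_0)b+Y(a,z_0)Pb+\la Y(a,z_0)b,\,z_2\bigr)c$, and by \meqref{2.7} the argument of the outer $Y$ collapses to $Y(Pa,z_0)Pb$; weak associativity of $Y$ applied to $(Pa,Pb,c)$ then produces the required $N$ equating the result with the right-hand side $Y(Pa,z_0+z_2)Y(Pb,z_2)c$. Axiom \meqref{5.7} is handled analogously: both sides expand into a $1$-summand and a $\la$-summand, matched respectively by weak associativity of $Y$ at $(Pa,b,Pc)$ and at $(Pa,b,c)$. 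For \meqref{5.6}, the first step is to use \meqref{2.7} inside $a\prec'_{z_0+z_2}(b\succ'_{z_2}c+b\prec'_{z_2}c)$ to collapse $P\bigl(Y(Pb,z_2)c+Y(b,z_2)Pc+\la Y(b,z_2)c\bigr)$ into $Y(Pb,z_2)Pc$; the identity then splits on each side into four summands with coefficients $1,\la,\la,\la^2$, which are paired by the four weak-associativity identities of $Y$ at $(a,Pb,Pc)$, $(a,b,Pc)$, $(a,Pb,c)$ and $(a,b,c)$. Taking $N$ to be the maximum of the four individual ones serves all of them simultaneously.

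For part \mref{it:rbd1}, the $\la=0$ case of \meqref{5.6}--\meqref{5.8} is a strict simplification of the argument just given. What remains is to check the additional axioms \meqref{4.43} and \meqref{4.44}, which is where weak commutativity enters. After substituting \meqref{5.9} and using \meqref{2.7} with $\la=0$ on the right-hand side of \meqref{4.43} to collapse $P\bigl(Y(Pa,z_1)c+Y(a,z_1)Pc\bigr)=Y(Pa,z_1)Pc$, axiom \meqref{4.43} becomes the identity $(z_1-z_2)^N Y(Pa,z_1)Y(b,z_2)Pc=(z_1-z_2)^N Y(b,z_2)Y(Pa,z_1)Pc$, while \meqref{4.44} becomes $(z_1-z_2)^N Y(Pa,z_1)Y(Pb,z_2)c=(z_1-z_2)^N Y(Pb,z_2)Y(Pa,z_1)c$; both are instances of \meqref{2.1} applied to the appropriate pairs.

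Assuming further that $P$ is translation invariant, the skew-symmetry \meqref{4.38} follows directly from \meqref{2.5}: $e^{zD}(a\prec_{-z}b)=e^{zD}Y(a,-z)Pb=Y(Pb,z)a=b\succ_z a$, and the companion identity is symmetric. The $D$-bracket derivative property \meqref{4.39} reduces to \meqref{2.4} after commuting $D$ past $P$, e.g.\ $D(a\prec_z b)-a\prec_z Db=DY(a,z)Pb-Y(a,z)DPb=[D,Y(a,z)]Pb=\tfrac{d}{dz}(a\prec_z b)$. The main obstacle in the whole argument is the book-keeping in \meqref{5.6} for general $\la$: one must confirm that after the collapse via \meqref{2.7}, the two sides really do decompose into matching families of four weak-associators with identical $\la$-coefficients; once this is checked, the rest of the proof is a sequence of careful but routine substitutions.
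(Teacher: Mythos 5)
Your proposal is correct and follows essentially the same route as the paper: expand the partial operations, use the Rota--Baxter identity \meqref{2.7} to collapse the $P(\cdots)$ terms, match the resulting summands by weak associativity \meqref{2.2} at the triples $(a,Pb,Pc)$, $(a,b,Pc)$, $(a,Pb,c)$, $(a,b,c)$ (resp.\ weak commutativity \meqref{2.1} at $(Pa,b)$ and $(Pa,Pb)$ for \meqref{4.43}--\meqref{4.44}), and verify \meqref{4.38}--\meqref{4.39} from skew-symmetry \meqref{2.5} and the $D$-bracket derivative property \meqref{2.4} using $PD=DP$. The only blemish is notational: in your treatment of \meqref{5.8} the outer vertex operator's argument should be $P\bigl(Y(Pa,z_0)b+Y(a,z_0)Pb+\la Y(a,z_0)b\bigr)$, which is exactly what \meqref{2.7} collapses to $Y(Pa,z_0)Pb$, as your subsequent step implicitly assumes.
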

    \begin{proof}
\meqref{it:rbd2}.
To verify equation \meqref{5.6}, we have
        \begin{align*}
            (z_0+z_2)^{N}   (a\prec'_{z_0}b)\prec'_{z_2}c &=(z_0+z_2)^N(Y(Y(a,z_0)P(b)+\lambda Y(a,z_0)b,z_2)P(c)\\
            &+\lambda Y(Y(a,z_0)P(b)+\lambda Y(a,z_0)b,z_2)c).
        \end{align*}
        On the other hand,
        \begin{align*}
            &(z_0+z_2)^{N}a\prec'_{z_0+z_2} (b\succ'_{z_2}c+b\prec'_{z_2}c)\\&=(z_0+z_2)^N(Y(a,z_0+z_2)P(Y(P(b),z_2)c+Y(b,z_2)P(c)+\lambda Y(b,z_2)c)\\
            &+\lambda Y(a,z_0+z_2)(Y(P(b),z_2)c+Y(b,z_2)P(c)+\lambda Y(b,z_2)c))\\
            & =(z_0+z_2)^N(Y(a,z_0+z_2)Y(P(b),z_2)P(c)\\
            &+\lambda Y(a,z_0+z_2)(Y(P(b),z_2)c+Y(b,z_2)P(c)+\lambda Y(b,z_2)c) ).
        \end{align*}
Take a common $N$ that ensures the weak associativity \meqref{2.2} for $(a, P(b), P(c))$, $(a,b, P(c))$, $(a, P(b), c)$, and $(a, b,c)$ at the time. Then equation \meqref{5.6} holds. \meqref{5.7} and \meqref{5.8} can be proved similarly, we omit the details.

\delete{For equation \meqref{5.7}, we have
\begin{align*}(z_0+z_2)^{N} (a\succ' _{z_0}b)\prec'_{z_{2}}c&=(z_0+z_2)^N(Y(Y(P(a),z_0)b,z_2)P(c)+\lambda Y(Y(P(a),z_0)b,z_2)c),\\
(z_0+z_2)^{N}a\succ'_{z_0+z_2}(b\prec'_{z_2}c)&=(z_0+z_2)^NY(P(a),z_0+z_2)(Y(b,z_2)P(c)+\lambda Y(b,z_1)c).
        \end{align*}
We again take a common $N$ such that the weak associativities for $(P(a), b, P(c))$ and $(P(a), b, c)$ are all satisfied. Then equation \meqref{5.7} holds. Finally, for equation \meqref{5.8}, we have
        \begin{align*}
            (z_0+z_2)^{N}   (a\succ'_{z_0}b+a\prec'_{z_0}b)\succ'_{z_2}c
            &=(z_0+z_2)^NY(P(Y(P(a),z_0)b+Y(a,z_0)P(b)+\lambda Y(a,z_0)b),z_2)c\\
            &=(z_0+z_2)^NY(Y(P(a),z_0)P(b),z_2)c,\\
            (z_0+z_2)^{N}a\succ'_{z_0+z_2}(b\succ'_{z_2}c)&=(z_0+z_2)^NY(P(a),z_0+z_2)(Y(P(b),z_2)c).
        \end{align*}
Take a common $N$ such that the weak associativities for $(P(a), P(b),c)$ are satisfied. Then equation \meqref{5.8} holds. Note that the $N$ we chosen depends only on $a$ and $c$, in view of \meqref{2.2}, since $P$ is fixed. This proves \meqref{it:rbd2}.}
\smallskip

\noindent
\meqref{it:rbd1}. By taking $\la=0$, we see that $(V,\prec_z,\succ_z)$ given by \meqref{5.9} is a dendriform field algebra. Furthermore, by \meqref{5.9} and \meqref{2.7}, we have
\begin{align*}
 a\succ_{z_1} (b\prec_{z_2}c)&=a\succ_{z_1}(Y(b,z_2)P(c))=Y(P(a),z_1)Y(b,z_2)P(c),\\
 b\prec_{z_2}(a\succ_{z_1}c+a\prec_{z_1}c)&=Y(b,z_2)P(Y(P(a),z_1)c+Y(a,z_1)P(c))=Y(b,z_2)Y(P(a),z_1)P(c).
\end{align*}
Choose a common $N\in \N$ so that the weak commutativity \meqref{2.1} holds for $(P(a),b,P(c))$. Then we have \meqref{4.43}. By \meqref{5.9} and \meqref{2.7} again,
\begin{align*}
a\succ_{z_1}(b\succ_{z_2}c)&=a\succ_{z_1}(Y(P(b),z_2)c)=Y(P(a),z_1)Y(P(b),z_2)c,\\
b\succ_{z_2}(a\succ_{z_1}c)&=b\succ_{z_2}(Y(P(a),z_1)c)=Y(P(b),z_2)Y(P(a),z_1)c.
\end{align*}
Choose an $N\in \N$ so that the weak commutativity \meqref{2.1} holds for $(P(a),P(b),c)$. Then we have \meqref{4.44}. Note that the $N$ we have chosen depends only on $a$ and $b$ since $P$ is fixed. Thus, $(V,\prec_z,\succ_z)$ is a dendriform vertex Leibniz algebra.

Finally, if $P$ is translation invariant ($PD=DP$), then by \meqref{5.9} we have
        \begin{align*}
            e^{zD}(a\prec_{-z}b )&=e^{zD}Y(a,-z)P(b)=Y(P(b),z)a=b\succ_{z} a,\\
            e^{zD}(a\succ_{-z}b )&=e^{zD}Y(P(a),-z)b=Y(b,z)P(a)=b\prec_{z} a.
        \end{align*}
Moreover, the $D$-derivative and bracket derivative properties \meqref{2.3} and \meqref{2.4} yield
        \begin{align*}
            D(a\prec_z b)-a\prec_{z} (Db)&=DY(a,z)P(b)-Y(a,z)P(Db)=[D,Y(a,z)]P(b)\\
            &=\frac{d}{dz} Y(a,z)P(b)=\frac{d}{dz} (a\prec_z b),\\
            D(a\succ_{z} b)-a\succ_{z} (Db)&=DY(P(a),z)b-Y(P(a),z)Db=[D,Y(P(a),z)]b \\
            &=\frac{d}{dz}(Y(P(a),z)b)=\frac{d}{dz} (a\succ_{z} b).
        \end{align*}
Hence $(V,\prec_z,\succ_z, D)$ is a dendriform vertex algebra, in view of \meqref{4.38} and \meqref{4.39}.
    \end{proof}

Dendriform field algebras, vertex Leibniz algebras, and vertex algebras also give rise to field Leibniz algebras (see Definition \mref{df4.2}),  vertex Leibniz algebras (see Definition~\mref{df2.10}), and vertex algebras without vacuum (see Definition \mref{df2.5}), respectively.

\begin{thm}\mlabel{thm4.5}
Let $V$ be a vector space, equipped with two linear maps  $\prec_z, \succ_z: V\ra \Hom(V,V((z)))$ as in Definition~\mref{df4.3}. Define $Y:V\ra (\End V)[[z,z^{-1}]]$ by
\begin{equation}\mlabel{5.11}
Y(a,z) b:=a\prec_zb+a\succ_z b, \quad  a, b\in V.
\end{equation}
Then we have the following properties, with $Y$ given by \meqref{5.11}$:$
\begin{enumerate}
\item If $(V,\prec_z,\succ_z)$ is a dendriform field algebra, then $(V,Y)$ is a field Leibniz algebra.
\item If $(V,\prec_z,\succ_z)$ is a dendriform vertex Leibniz algebra, then $(V,Y)$ is a vertex Leibniz algebra.
 \item If there exists a linear map $D:V\ra V$ such that $(V, \prec_z, \succ_z,D )$ is a dendriform vertex algebra, then $(V,Y,D)$ is a vertex algebra without vacuum.
\end{enumerate}
\end{thm}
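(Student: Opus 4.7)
My plan is to establish the three parts in order, using part~(i) as input for~(ii) and~(iii). For part~(i), the truncation property is automatic: since $a\prec_z b$ and $a\succ_z b$ both lie in $V((z))$ by Definition~\mref{df4.3}, so does their sum $Y(a,z)b$. For weak associativity, I would add the three dendriform field axioms \meqref{5.6}, \meqref{5.7}, \meqref{5.8} after choosing a common $N$ that works for all three. The left-hand sides collect to $(z_0+z_2)^N Y(Y(a,z_0)b,z_2)c$ using $Y(a,z)b = a\prec_z b + a\succ_z b$, and the right-hand sides collect to $(z_0+z_2)^N Y(a,z_0+z_2)Y(b,z_2)c$ after grouping $b\prec_{z_2}c + b\succ_{z_2}c$ into $Y(b,z_2)c$. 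This is exactly the weak associativity \meqref{2.2} of $Y$, so $(V,Y)$ is a field Leibniz algebra.

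For part~(ii), weak associativity is inherited from part~(i), so by Theorem~\mref{thm2.2} and Definition~\mref{df2.10} it remains only to establish the weak commutativity \meqref{2.1} of $Y$. The plan is to exploit \meqref{4.43} and \meqref{4.44}, each applied in two orientations: once as stated, and once with the roles of $(a,z_1)$ and $(b,z_2)$ interchanged. Explicitly, \meqref{4.43} applied to $(b,a,z_2,z_1)$ gives $b\succ_{z_2}(a\prec_{z_1}c) = a\prec_{z_1}(Y(b,z_2)c)$; \meqref{4.43} applied to $(a,b,z_1,z_2)$ gives $a\succ_{z_1}(b\prec_{z_2}c) = b\prec_{z_2}(Y(a,z_1)c)$; and \meqref{4.44} gives $a\succ_{z_1}(b\succ_{z_2}c) = b\succ_{z_2}(a\succ_{z_1}c)$. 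Expanding $Y(a,z_1)Y(b,z_2)c$ into its four cross-terms and substituting these three identities will regroup the result as $Y(b,z_2)Y(a,z_1)c$, with a common prefactor $(z_1-z_2)^N$ that depends only on $a$ and $b$. The main obstacle is precisely this step: weak commutativity is not simply the sum of \meqref{4.43} and \meqref{4.44}, and the key observation is that the asymmetry of \meqref{4.43} between $\succ$ on the left and $\prec$ on the right forces two orientations, which together cover all four cross-terms.

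For part~(iii), I would build on part~(i) to get a field Leibniz structure on $(V,Y)$, and then verify the skew-symmetry \meqref{2.5} and the $D$-bracket derivative property \meqref{2.4} for $Y$, so as to invoke Proposition~\mref{prop4.2}. Skew-symmetry is immediate from the dendriform skew-symmetry \meqref{4.38}:
\[
e^{zD}Y(b,-z)a = e^{zD}(b\prec_{-z}a) + e^{zD}(b\succ_{-z}a) = a\succ_z b + a\prec_z b = Y(a,z)b.
\]
The $D$-bracket derivative property follows by summing the two relations in \meqref{4.39}, yielding $[D,Y(a,z)]b = \tfrac{d}{dz}Y(a,z)b$ for every $a,b\in V$. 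Proposition~\mref{prop4.2} then upgrades $(V,Y)$ with the datum of $D$ to a vertex algebra without vacuum, completing the proof.
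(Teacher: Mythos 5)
Your proposal is correct and follows essentially the same route as the paper: truncation plus summing the three dendriform field axioms for weak associativity, the two orientations of \meqref{4.43} together with \meqref{4.44} (with a common power of $(z_1-z_2)$) for weak commutativity, and skew-symmetry plus the $D$-bracket derivative property obtained by summing \meqref{4.38} and \meqref{4.39} for part (iii). The only cosmetic difference is that you conclude (iii) by citing Proposition~\mref{prop4.2}, whereas the paper unwinds that same argument via Proposition~\mref{prop2.6} and Theorem~\mref{thm2.2}.
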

\begin{proof}
Clearly, $Y$ defined by \meqref{5.11} satisfies the truncation
property, in view of Definition~\mref{df4.3}. Let
$(V,\prec_z,\succ_z)$ be a dendriform field algebra. We claim
that $Y$ satisfies the weak associativity \meqref{2.2}. Indeed,
for all $a,b,c \in V$, we have
        \begin{align*}
(z_0+z_2)^N Y(&Y(a,z_0)b,z_2)c=(z_0+z_2)^kY(a\prec _{z_0}b+a\succ_{z_0}b,z_2)c\\
&=(z_0+z_2)^N(a\prec_{z_0}b+a\succ_{z_0}b)\prec_{z_2}c+(a\prec_{z_0}b+a\succ_{z_0}b)\succ_{z_2}c\\
&=(z_0+z_2)^N((a\prec_{z_0}b)\prec_{z_2}c+(a\succ_{z_0}b)\prec_{z_2}c+(a\prec_{z_0}b+a\succ_{z_0}b)\succ_{z_2}c),\\
(z_0+z_2)^NY(&a,z_0+z_2)Y(b,z_2)c=(z_0+z_2)^kY(a,z_0+z_2)(b\prec_{z_2}c+b\succ_{z_2}c)\\
&=(z_0+z_2)^N(a\prec_{z_0+z_2}(b\prec_{z_2}c+b\succ_{z_2}c)+a\succ_{z_0+z_2}(b\prec_{z_2}c+b\succ_{z_2}c))\\
&=(z_0+z_2)^N(a\prec_{z_0+z_2}(b\prec_{z_2}c+b\succ_{z_2}c)+a\succ_{z_0+z_2}(b\prec_{z_2}c)+a\succ_{z_0+z_2}(b\succ_{z_2}c)).
\end{align*}
We take a common $N>0$ depending on $a$ and $c$, such that equations \meqref{5.6}, \meqref{5.7}, and \meqref{5.8} are satisfied at the same time. Then we have the weak associativity \meqref{2.2}. Hence $(V,Y)$ is a field Leibniz algebra.

Let $(V,\prec_z,\succ_z)$ be a dendriform vertex Leibniz
algebra. Choose a $N\in \N$ depending on $a$ and $b$ such that
\meqref{4.43} and \meqref{4.44} hold simultaneously. Then
\begin{align*}
&(z_1-z_2)^NY(a,z_1)Y(b,z_2)c\\
&=(z_1-z_2)^N(a\prec_{z_1} (b\prec_{z_2}c+b\succ_{z_2}c)+a\succ_{z_1}(b\prec_{z_2}c)+a\succ_{z_1}(b\succ_{z_2}c))\\
&=(z_1-z_2)^N (b\succ_{z_2}(a\prec_{z_1}c))+(z_1-z_2)^N(b\prec_{z_2}(a\succ_{z_1}c+a\prec_{z_1}c)) +(z_1-z_2)^N b\succ_{z_2} (a\succ_{z_1}c) \\
&= (z_1-z_2)^N Y(b,z_2)Y(a,z_1)c.
\end{align*}
Thus, $(V,Y)$ satisfies the weak commutativity \meqref{2.1} and weak associativity \meqref{2.2}. Then by Theorem~\mref{thm2.2}, $(V,Y)$ is a vertex Leibniz algebra.

Finally, if there exists a linear map $D:V\ra V$ such that $(V, \prec_z, \succ_z,D )$ is a dendriform vertex algebra, then by \meqref{4.38} and  \meqref{4.39}, we have
$$e^{zD}Y(a,-z)b=e^{zD}(a\prec_{-z} b)+e^{zD}(a\succ_{-z} b)=b\succ_{z}a+b\prec_{z}a=Y(b,z)a,\quad a,b\in V.$$
Hence $(V,Y,D)$ satisfies the skew-symmetry \meqref{2.5}. Moreover,
\begin{align*}
D(Y(a,z)b)-Y(a,z)Db&= D(a\prec_z b)+D(a\succ_z b)-a\prec_z (Db)-a\succ_z (Db)\\
&=\frac{d}{dz}(a\prec_z Db)+\frac{d}{dz} (a\succ_z b)=\frac{d}{dz} Y(a,z)b, \quad a, b\in V.
\end{align*}
Hence $(V,Y,D)$ satisfies the $D$-bracket derivative property \meqref{2.4}. Then by Proposition \mref{prop2.6}, $Y$ also satisfies the weak commutativity, and by Theorem \mref{thm2.2}, $(V,Y,D)$ satisfies the Jacobi identity. This shows that $(V,Y,D)$ is a vertex algebra without vacuum.
\end{proof}

\subsection{Characterizations of the dendriform vertex (Leibniz) algebras}

Theorem \mref{thm4.5} shows that the equations \meqref{5.6}--\meqref{5.8} are the axioms underlying the weak associativity \meqref{2.2} of the vertex operator $Y$ given by $Y(a,z)b=a\prec_z b+a\succ_z b$, while equations \meqref{4.43} and \meqref{4.44} are the axioms underlying the weak commutativity \meqref{2.1}. On the other hand, by Proposition~\mref{prop2.6}, for any vertex operator map $Y$ on a vector space $V$, the weak associativity and weak commutativity are equivalent if there exists $D:V\ra V$ that satisfies some nice properties. So it is natural to expect the same kind of equivalency for the dendriform structures as well. In fact, we have the following conclusion.

\begin{prop}\mlabel{prop4.6}
Consider a quadruple $(V,\prec_z,\succ_z,D)$, where $V$ is a vector space, $\prec_z,\succ_z:V\ra \Hom(V,V((z)))$ are two linear maps, and $D:V\ra V$ is a linear map. Assume that $(V,\prec_z,\succ_z,D) $ satisfies \meqref{4.38} and \meqref{4.39}.
Then $(V,\prec_z,\succ_z,D) $ is a dendriform vertex algebra if and only if it satisfies \meqref{4.43} and \meqref{4.44}. In other words, in this case, equations \meqref{5.6}--\meqref{5.8} are equivalent to equations \meqref{4.43} and \meqref{4.44}.
    \end{prop}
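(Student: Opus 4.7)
The proof adapts the Huang--Lepowsky strategy from Proposition~\mref{prop2.6} to the level of the split operations $\prec_z$ and $\succ_z$. The two main computational tools are skew-symmetry \meqref{4.38}, which interchanges $\prec$ and $\succ$ up to an $e^{zD}$-twist, and the exponentiated forms of \meqref{4.40'} and \meqref{4.39}, namely the $D$-conjugation identities
\[
(e^{z_0 D}a)\succ_z b = a\succ_{z+z_0}b, \qquad a\succ_z(e^{z_0 D}b) = e^{z_0 D}(a\succ_{z-z_0}b),
\]
and the same identities with $\prec$ in place of $\succ$. Throughout, substitutions like $z\pm z_0$ use the standard convention of expanding in nonnegative powers of $z_0$.

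For the backward direction ($\Leftarrow$), I would derive each of \meqref{5.6}--\meqref{5.8} from \meqref{4.43}, \meqref{4.44} by short chains of substitutions. For example, to deduce \meqref{5.8}, start from $a\succ_{z_1}(b\succ_{z_2}c)$ with $z_1=z_0+z_2$: first use \meqref{4.38} to rewrite $b\succ_{z_2}c = e^{z_2 D}(c\prec_{-z_2}b)$; push $e^{z_2 D}$ through the outer $\succ_{z_1}$ via the inner $D$-translation identity; apply \meqref{4.43} with the substituted variables $(z_1-z_2,-z_2)$ to rewrite $a\succ_{z_1-z_2}(c\prec_{-z_2}b)$ as $c\prec_{-z_2}Y(a,z_0)b$ (this step contributes the factor $(z_1-z_2-(-z_2))^N=z_1^N=(z_0+z_2)^N$ coming from the truncation factor in \meqref{4.43}); finally re-apply \meqref{4.38} to recognize the result as $Y(a,z_0)b\succ_{z_2}c$, which is the LHS of \meqref{5.8}. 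The derivation of \meqref{5.7} from \meqref{4.44} follows the same template with $\prec$ and $\succ$ roles exchanged on the outside, and \meqref{5.6} follows analogously using the $a\leftrightarrow b$, $z_1\leftrightarrow z_2$ form of \meqref{4.43} together with \meqref{4.44} (valid since \meqref{4.43} is quantified over all $a,b,c$).

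For the forward direction ($\Rightarrow$), I would run each chain in reverse. Starting from \meqref{5.8}, use \meqref{4.38} to rewrite the LHS via $Y(a,z_0)b = e^{z_0 D}Y(b,-z_0)a$, push the exponential through the outer $\succ_{z_2}$ using the outer $D$-conjugation identity, and substitute $z_1=z_0+z_2$ to obtain $z_1^N\,Y(b,z_2-z_1)a\succ_{z_1}c = z_1^N\,a\succ_{z_1}(b\succ_{z_2}c)$. Then apply \meqref{5.8} once more with the swap $(a,b)\leftrightarrow(b,a)$ and $(z_0,z_2)\to(z_2-z_1,z_1)$ to transform the LHS into $z_2^{N'}\,b\succ_{z_2}(a\succ_{z_1}c)$. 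After multiplying by compatible factors and comparing, \meqref{4.44} emerges in the appropriate formal-series ring. The identity \meqref{4.43} is recovered analogously from \meqref{5.7} (together with \meqref{5.6}), completing the equivalence.

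The main obstacle, as in the classical proof of Proposition~\mref{prop2.6}, is the careful bookkeeping of formal-variable expansion conventions and truncation factors through each substitution. Each invocation of an axiom contributes a factor of the form $(z_0+z_2)^N$ or $(z_1-z_2)^N$ that transforms non-trivially under substitution (e.g.\ $(z_1-z_2)^N$ becomes $z_0^N$ under $z_1=z_0+z_2$), so one must verify that the accumulated factors legitimize every formal expansion and that the final identities live in a common formal-series ring. This is precisely the technical core mirrored from the Huang--Lepowsky arguments in \mcite{LL,LTW}.
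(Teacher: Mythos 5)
Your backward direction is essentially the paper's own argument run in reverse, and it is fine: using the skew-symmetry \meqref{4.38} together with the exponentiated form of \meqref{4.39} (the conjugation identity \meqref{4.45}), each of \meqref{5.6}--\meqref{5.8} is obtained from \meqref{4.43}--\meqref{4.44} by rewriting the inner factor via \meqref{4.38}, pushing $e^{z_2D}$ through the outer operation, applying \meqref{4.43} or \meqref{4.44} at the relabeled variables $(z_0,-z_2)$, and applying \meqref{4.38} again. (Minor point: for \meqref{5.6} only the relabeled form of \meqref{4.43} is needed; \meqref{4.44} plays no role there.)

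The forward direction as you sketch it has a genuine gap. First, the pairing is crossed: under the conjugation correspondence, \meqref{5.6} and \meqref{5.8} are each equivalent to \meqref{4.43}, while \meqref{5.7} is equivalent to \meqref{4.44}; so \meqref{5.8} should be made to yield \meqref{4.43} (as your own backward chain shows), not \meqref{4.44}. More seriously, your chain substitutes $z_0=z_1-z_2$, resp.\ $z_0=z_2-z_1$, into \meqref{5.8}. Since $Y(b,z_0)a$ and $a\succ_{z_0+z_2}(\cdot)$ contain negative powers of $z_0$ (and of $z_0+z_2$, expanded with a fixed convention), substituting a difference of formal variables for $z_0$ is not a legitimate operation on these iterated Laurent series; the factor $(z_0+z_2)^N$ controls only negative powers of $z_0+z_2$, not of $z_0$ itself. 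Moreover, even granting the substitutions, your final step multiplies the two resulting identities by $z_2^{N'}$ and $z_1^{N}$ and compares; since multiplication by powers of a \emph{single} variable is injective on $V[[z_1^{\pm 1},z_2^{\pm 1}]]$, this would force $a\succ_{z_1}(b\succ_{z_2}c)=b\succ_{z_2}(a\succ_{z_1}c)$ with no $(z_1-z_2)^N$ factor at all, which is false in general (the two sides lie in $V((z_2))((z_1))$ and $V((z_1))((z_2))$, respectively), so at least one step cannot be valid. The repair is simply to run your backward chains in reverse, which is what the paper does: rewrite both sides of each of \meqref{5.6}--\meqref{5.8} in the form $e^{z_2D}(\cdots)$ using \meqref{4.38} and \meqref{4.45}, cancel the invertible operator $e^{z_2D}$, and relabel $(z_0,z_2)\mapsto (z_2,-z_1)$ or $(z_1,-z_2)$ together with the vector arguments to land exactly on \meqref{4.43} (from \meqref{5.6} or \meqref{5.8}) and \meqref{4.44} (from \meqref{5.7}); no substitution of variable differences and no delta-function manipulation is needed.
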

    \begin{proof}
It follows from \meqref{4.39} that
        \begin{equation}\mlabel{4.45}
            e^{z_0D} a\prec_z  e^{-z_0 D} b=a\prec_{z+z_0} b,\quad e^{z_0D} a\succ_z  e^{-z_0 D} b=a\succ_{z+z_0} b, \quad a, b\in V.
        \end{equation}
The proof of \meqref{4.45} is similar to the proof of the conjugation formula of the vertex operator $e^{z_0D} Y(a,z)  e^{-z_0 D} =Y(a,z+z_0)$ by applying the $D$-bracket derivative property \meqref{2.4}, for which we refer the reader to \mcite{FHL,LL} for details.

By \meqref{4.45} and \meqref{4.38}, we can express the two sides of \meqref{5.6} as
        \begin{align*}
            (z_0+z_2)^{N}a\prec_{z_0+z_2} (b\succ_{z_2}c+b\prec_{z_2}c)&= (z_0+z_2)^N e^{z_2D}a\prec_{z_0}e^{-z_2 D} (b\succ_{z_2}c+b\prec_{z_2}c)\\
            &=(z_0+z_2)^N e^{z_2D}a\prec_{z_0}(c\prec_{-z_2}b+c\succ_{-z_2}b),
            \\
            (z_0+z_2)^{N}   (a\prec_{z_0}b)\prec_{z_2}c&=(z_0+z_2)^Ne^{z_2D} c\succ_{-z_2}(a\prec_{z_0}b),
        \end{align*}
        where $N\in \N$ depends on $a$ and $c$. Hence $(z_0+z_2)^N a\prec_{z_0}(c\prec_{-z_2}b+c\succ_{-z_2}b)=(z_0+z_2)^N c\succ_{-z_2}(a\prec_{z_0}b).$ By replacing $(z_0,z_2)$ with $(z_2,-z_1)$, and replacing $(c,a,b)$ with the ordered triple $(a,b,c)$ in this equation, we obtain
$$(z_2-z_1)^N b\prec_{z_2}(a\prec_{z_1}c+a\succ_{z_1}c)=(z_2-z_1)^N a\succ_{z_1}(b\prec_{z_2}c),$$
where $N$ depends on $a$ and $b$. This equation is equivalent to \meqref{4.43} since $N\geq 0$. Similarly, we can express the two sides of \meqref{5.7} as
        \begin{align*}
(z_0+z_2)^{N}a\succ_{z_0+z_2}(b\prec_{z_2}c)&=(z_0+z_2)^N e^{z_2 D} a\succ_{z_0}e^{-z_2D} (b\prec_{z_2}c)\\
            &=(z_0+z_2)^N e^{z_2 D} a\succ_{z_0}(c\succ_{-z_2}b),
            \\
            (z_0+z_2)^{N}   (a\succ _{z_0}b)\prec_{z_{2}}c&=    (z_0+z_2)^{N} e^{z_2D} c\succ_{-z_2}    (a\succ _{z_0}b),
        \end{align*}
where $N$ depends on $a$ and $c$.
Then $(z_0+z_2)^N  a\succ_{z_0}(c\succ_{-z_2}b)=(z_0+z_2)^{N}  c\succ_{-z_2}    (a\succ _{z_0}b),$ and by replacing $(z_0,z_2)$ with $(z_1,-z_2)$, and $(a,c,b)$ with $(a,b,c)$, we have $N$ depends on $a$ and $b$, and
$$(z_1-z_2)^N a\succ_{z_1}(b\succ_{z_2}c)=(z_1-z_2)^N b\succ_{z_2}(a\succ_{z_1}c),$$
which is \meqref{4.44}. Finally, we can express each side of \meqref{5.8} as
        \begin{align*}
(z_0+z_2)^{N}a\succ_{z_0+z_2}(b\succ_{z_2}c)&=  (z_0+z_2)^{N}e^{z_2D}a\succ_{z_0} e^{-z_2D}(b\succ_{z_2}c)\\
            &=(z_0+z_2)^{N}e^{z_2D}a\succ_{z_0} (c\prec_{-z_2}b),
            \\
            (z_0+z_2)^{N}   (a\succ_{z_0}b+a\prec_{z_0}b)\succ_{z_2} c&=    (z_0+z_2)^{N}   e^{z_2D}c\prec_{-z_2}(a\succ_{z_0}b+a\prec_{z_0}b),
        \end{align*}
        where $N$ depends on $a$ and $c$. Then $(z_0+z_2)^{N}a\succ_{z_0} (c\prec_{-z_2}b)= (z_0+z_2)^{N}   c\prec_{-z_2}(a\succ_{z_0}b+a\prec_{z_0}b),$ and by replacing $(z_0,z_2)$ with $(z_1,-z_2)$, and $(a,c,b)$ with $(a,b,c)$, we have $N$ depends on $a$ and $b$, and
        $$(z_1-z_2)^N a\succ_{z_1} (b\prec_{z_2}c)=(z_1-z_2)^N b\prec_{z_2}(a\succ_{z_1}c+a\prec_{z_1}c),$$
        which is \meqref{4.43}. Conversely, if $(V,\prec_z,\succ_z,D) $ satisfies \meqref{4.43} and \meqref{4.44}, by reversing the argument above, we can show that $(V,\prec_z,\succ_z,D) $ also satisfies \meqref{5.6}--\meqref{5.8}.
    \end{proof}
By Proposition~\mref{prop4.6}, any dendriform vertex algebra $(V,\prec_z,\succ_z,D) $ also satisfies \meqref{4.43} and \meqref{4.44}. Hence we have the following consequence.
\begin{coro}\mlabel{cor4.7}
Let $(V, \prec_z, \succ_z,D)$ be a dendriform vertex algebra. Then $(V,\prec_z,\succ_z)$ is a dendriform vertex Leibniz algebra.
\end{coro}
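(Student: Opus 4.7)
The plan is to observe that this corollary is essentially an immediate consequence of Proposition~\mref{prop4.6}, which has just been established. By definition, a dendriform vertex algebra $(V,\prec_z,\succ_z,D)$ is a dendriform field algebra satisfying the compatibility properties \meqref{4.38} and \meqref{4.39}, i.e., it already satisfies \meqref{5.6}--\meqref{5.8} together with the skew-symmetry and $D$-bracket derivative properties.

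The key step is then to invoke Proposition~\mref{prop4.6}: under the standing assumption that $(V,\prec_z,\succ_z,D)$ satisfies \meqref{4.38} and \meqref{4.39}, the axioms \meqref{5.6}--\meqref{5.8} are equivalent to \meqref{4.43} and \meqref{4.44}. Since the former hold by hypothesis, so do the latter. Hence $(V,\prec_z,\succ_z)$ is a dendriform field algebra that also satisfies \meqref{4.43} and \meqref{4.44}, which is precisely the definition of a dendriform vertex Leibniz algebra (Definition~\mref{df4.3}(2)).

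There is essentially no obstacle here — the real work was already done in Proposition~\mref{prop4.6}, where the equivalence of the ``weak associativity type'' axioms \meqref{5.6}--\meqref{5.8} and the ``weak commutativity type'' axioms \meqref{4.43}--\meqref{4.44} was established by using the conjugation formula \meqref{4.45} and skew-symmetry \meqref{4.38} to translate each of the three dendriform weak associativity identities into one of the two dendriform weak commutativity identities. The corollary is just the statement that any dendriform vertex algebra lies in the intermediate category between dendriform field algebras and dendriform vertex Leibniz algebras described in the embedding chain displayed after Definition~\mref{df4.3}. Thus the entire proof can be written in one or two sentences, simply referencing Proposition~\mref{prop4.6} and the definitions.
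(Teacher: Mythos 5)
Your proposal is correct and matches the paper's own argument: the paper derives this corollary precisely by noting that, under \meqref{4.38} and \meqref{4.39}, Proposition~\mref{prop4.6} yields \meqref{4.43} and \meqref{4.44} from the dendriform field algebra axioms, which together with Definition~\mref{df4.3} gives the dendriform vertex Leibniz structure.
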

Therefore, we have the following diagram that illustrate the relations between our notions of dendriform algebras in Definition~\mref{df4.2} and the original (vertex) algebras:
$$
\begin{tikzcd}
\text{vertex\ alg.\ without\ vacuum} \arrow[r,hook,"\text{subcat.}"]& \text{vertex\ Leibniz\ alg.} \arrow [r,hook,"\text{full\ subcat.}"]& \text{field\ Leibniz\ alg.} \\
\text{dendriform\ vertex\ alg.} \arrow[u,"\text{induce}"]\arrow[r,hook,"\text{subcat.}"]&\text{dendriform\ vertex\ Leibniz\ alg.}\arrow[r,hook,"\text{full\ subcat.}"]\arrow[u,"\text{induce}"]& \text{dendriform\ field\ alg.}\arrow[u,"\text{induce}"]
\end{tikzcd}
$$

We can also obtain an analog of the Jacobi identity for the operators $\prec_z$ and $\succ_z$. We recall the following Lemma 2.1 in \mcite{L4}.
    \begin{lm}\mlabel{lm4.9}
        Let $U$ be a vector space, and let $A(z_1,z_2)\in U((z_1))((z_2))$, $B(z_1,z_2)\in U((z_2))((z_1))$, and $C(z_0,z_2)\in U((z_2))((z_0))$. Then
\begin{equation}\mlabel{4.49}
            z^{-1}_0\delta\left(\frac{z_1-z_2}{z_0}\right)A(z_1,z_2)-   z^{-1}_0\delta\left(\frac{-z_2+z_1}{z_0}\right) B(z_1,z_2)= z^{-1}_2\delta\left(\frac{z_1-z_0}{z_2}\right) C(z_0,z_2)
\end{equation}
        holds if and only if there exists $k,l\in \N$ such that
        \begin{align}
            (z_1-z_2)^k A(z_1,z_2)&=(z_1-z_2)^k B(z_1,z_2), \mlabel{4.50}\\
            (z_0+z_2)^l A(z_0+z_2,z_2)&=(z_0+z_2)^l C(z_0,z_2).\mlabel{4.51}
        \end{align}
    \end{lm}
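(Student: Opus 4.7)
The plan is to employ the formal delta function calculus, relying on three standard identities: the three-term delta identity
\begin{equation*}
z_0^{-1}\delta\left(\frac{z_1-z_2}{z_0}\right) - z_0^{-1}\delta\left(\frac{-z_2+z_1}{z_0}\right) = z_2^{-1}\delta\left(\frac{z_1-z_0}{z_2}\right),
\end{equation*}
the substitution property $z_2^{-1}\delta(\frac{z_1-z_0}{z_2})f(z_1) = z_2^{-1}\delta(\frac{z_1-z_0}{z_2})f(z_0+z_2)$ (whenever well-defined), and the absorption identity $(z_1-z_2)^k\delta(\frac{z_1-z_2}{z_0}) = z_0^k\delta(\frac{z_1-z_2}{z_0})$ valid for $k\in\Z$.

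For the direction $(\Leftarrow)$, assume \meqref{4.50} and \meqref{4.51} hold, and set $F := (z_1-z_2)^k A = (z_1-z_2)^k B$. Applying the absorption identity to each term on the left-hand side of \meqref{4.49} yields
\begin{equation*}
z_0^{-1}\delta\left(\frac{z_1-z_2}{z_0}\right)A = z_0^{-k-1}\delta\left(\frac{z_1-z_2}{z_0}\right)F, \quad z_0^{-1}\delta\left(\frac{-z_2+z_1}{z_0}\right)B = z_0^{-k-1}\delta\left(\frac{-z_2+z_1}{z_0}\right)F.
\end{equation*}
Subtracting and invoking the three-term identity collapses the LHS of \meqref{4.49} to $z_0^{-k}z_2^{-1}\delta(\frac{z_1-z_0}{z_2})F$. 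The substitution property, combined with the computation $F(z_0+z_2,z_2) = z_0^k A(z_0+z_2,z_2)$, reduces this further to $z_2^{-1}\delta(\frac{z_1-z_0}{z_2})A(z_0+z_2,z_2)$. Finally, \meqref{4.51} together with the identification $(z_0+z_2)^l = z_1^l$ on the support of the delta (and the non-zero-divisor property of $z_1^l$) identifies this expression with $z_2^{-1}\delta(\frac{z_1-z_0}{z_2})C(z_0,z_2)$, which is the RHS of \meqref{4.49}.

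For the direction $(\Rightarrow)$, assume \meqref{4.49}. To derive \meqref{4.50}, I multiply both sides by $z_0^k$ for $k\in\N$ sufficiently large and take $\Res_{z_0}$: the LHS reduces to $(z_1-z_2)^k(A-B)$ (the polynomials $(z_1-z_2)^k$ and $(-z_2+z_1)^k$ coinciding for $k\geq 0$), while the RHS vanishes because $z_2^{-1}\delta(\frac{z_1-z_0}{z_2})$ has only nonnegative powers of $z_0$ in its expansion and $C\in U((z_2))((z_0))$ has $z_0$-powers bounded below, so the $z_0^{-1}$-coefficient of $z_0^k \cdot z_2^{-1}\delta(\frac{z_1-z_0}{z_2})C$ is zero for $k$ beyond this bound. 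For \meqref{4.51}, I would run the $(\Leftarrow)$ chain of identities in reverse: from \meqref{4.50} just derived, the LHS of \meqref{4.49} equals $z_2^{-1}\delta(\frac{z_1-z_0}{z_2})A(z_0+z_2,z_2)$. Equating with the RHS $z_2^{-1}\delta(\frac{z_1-z_0}{z_2})C(z_0,z_2)$ and applying $\Res_{z_1}$ (using $\Res_{z_1}z_2^{-1}\delta(\frac{z_1-z_0}{z_2}) = 1$ and the fact that $A(z_0+z_2,z_2) - C(z_0,z_2)$ is independent of $z_1$) yields $A(z_0+z_2,z_2) = C(z_0,z_2)$, which implies \meqref{4.51} (trivially with $l=0$).

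The main obstacle is the bookkeeping of formal distribution spaces, since the three terms of \meqref{4.49} live a priori in different formal series spaces. Each manipulation--multiplications by delta functions, formal substitutions via $z_1 \mapsto z_0 + z_2$, and the various residues--must be verified to preserve well-definedness within a common ambient formal distribution space, and one must check carefully that the absorption and substitution identities apply in the configurations considered. This technical bookkeeping is routine in the formal calculus of vertex algebras (cf.~\mcite{FHL,LL}), but must be executed with care throughout.
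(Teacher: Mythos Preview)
The paper does not supply its own proof of this lemma; it is simply quoted from \mcite{L4} (Lemma~2.1 there). Your strategy via the formal delta calculus is the standard one, and the $(\Leftarrow)$ direction is essentially sound once the expansion conventions are tracked carefully.

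There is, however, a genuine gap in your $(\Rightarrow)$ argument for \meqref{4.51}: the conclusion $l=0$ is false. Take the vertex algebra case $A(z_1,z_2)=Y(a,z_1)Y(b,z_2)c$ and $C(z_0,z_2)=Y(Y(a,z_0)b,z_2)c$; weak associativity requires a genuinely positive $l$ in general, because $A(z_0+z_2,z_2)$ and $C$ do not even lie in the same formal series space. The error enters at the substitution step. The identity $z_2^{-1}\delta\bigl(\frac{z_1-z_0}{z_2}\bigr)=z_1^{-1}\delta\bigl(\frac{z_2+z_0}{z_1}\bigr)$ effects the replacement $z_1\mapsto z_2+z_0$ with expansion in nonnegative powers of $z_0$, whereas \meqref{4.51} uses $A(z_0+z_2,z_2)$ in the FHL convention (nonnegative powers of $z_2$). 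What your chain of equalities actually yields is $z_0^{-k}F(z_2+z_0,z_2)=C$, which is correct but is not \meqref{4.51}; moreover the intermediate product $z_2^{-1}\delta\bigl(\frac{z_1-z_0}{z_2}\bigr)A(z_0+z_2,z_2)$ is in general not a well-defined formal series at all.

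The clean fix mirrors your derivation of \meqref{4.50}: multiply \meqref{4.49} by $z_1^{l}$ and take $\Res_{z_1}$. For $l$ large the $B$-term vanishes (since $z_0^{-1}\delta\bigl(\frac{-z_2+z_1}{z_0}\bigr)$ involves only nonnegative powers of $z_1$ and $B\in U((z_2))((z_1))$), the $A$-term becomes $(z_0+z_2)^{l}A(z_0+z_2,z_2)$ via $z_0^{-1}\delta\bigl(\frac{z_1-z_2}{z_0}\bigr)=z_1^{-1}\delta\bigl(\frac{z_0+z_2}{z_1}\bigr)$, and the $C$-term becomes $(z_0+z_2)^{l}C$. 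This is not the ``routine bookkeeping'' you flag at the end; it is exactly where the two binomial conventions must be kept apart.
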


\begin{thm}\mlabel{thm4.10}
        Let $(V,\prec_z,\succ_z)$ be a dendriform vertex Leibniz algebra. Then we have the following three Jacobi identities involving the operators $\prec_z$ and $\succ_z$.
        \begin{equation}\mlabel{4.52}
        \begin{aligned}
            &z^{-1}_0\delta\left(\frac{z_1-z_2}{z_0}\right)a\succ_{z_1}(b\prec_{z_2}c)- z^{-1}_0\delta\left(\frac{-z_2+z_1}{z_0}\right) b\prec_{z_2}(a\succ_{z_1}c+a\prec_{z_1}c)\\
            &=z^{-1}_2\delta\left(\frac{z_1-z_0}{z_2}\right) (a\succ_{z_0}b)\prec_{z_2}c,
            \end{aligned}
            \end{equation}
            \begin{equation}\mlabel{4.53}
            \begin{aligned}
            &z^{-1}_0\delta\left(\frac{z_1-z_2}{z_0}\right)a\succ_{z_1}(b\succ_{z_2}c)- z^{-1}_0\delta\left(\frac{-z_2+z_1}{z_0}\right) b\succ_{z_2}(a\succ_{z_1}c)\\
            &=z^{-1}_2\delta\left(\frac{z_1-z_0}{z_2}\right) (a\succ_{z_0}b+a\prec_{z_0}b)\succ_{z_2}c,
            \end{aligned}
            \end{equation}
            \begin{equation}\mlabel{4.54}
            \begin{aligned}
            &z^{-1}_0\delta\left(\frac{z_1-z_2}{z_0}\right)a\prec_{z_1}(b\prec_{z_2}c+b\succ_{z_2}c)-   z^{-1}_0\delta\left(\frac{-z_2+z_1}{z_0}\right) b\succ_{z_2}(a\prec_{z_1}c)\\
            &=z^{-1}_2\delta\left(\frac{z_1-z_0}{z_2}\right) (a\prec_{z_0}b)\prec_{z_2}c,
        \end{aligned}
        \end{equation}
where $a,b,c\in V$, and $z_0,z_1,z_2$ are formal variables.

Furthermore, \meqref{4.52}, \meqref{4.53} and \meqref{4.54} for a dendriform vertex algebra $(V,\prec_z,\succ_z,D)$ are mutually equivalent. We call \meqref{4.53} the Jacobi identity for the dendriform vertex algebra $(V,\prec_z,\succ_z,D)$.
    \end{thm}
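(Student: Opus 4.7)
The plan is to apply Lemma~\mref{lm4.9} three times, once for each of the three Jacobi identities, by choosing appropriate series $A(z_1,z_2)$, $B(z_1,z_2)$, $C(z_0,z_2)$ such that the weak commutativity condition \meqref{4.50} and the weak associativity condition \meqref{4.51} become axioms of a dendriform vertex Leibniz algebra.

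For \meqref{4.53}, I would set $A = a\succ_{z_1}(b\succ_{z_2}c)$, $B = b\succ_{z_2}(a\succ_{z_1}c)$, and $C = (a\succ_{z_0}b+a\prec_{z_0}b)\succ_{z_2}c$. Then \meqref{4.50} is precisely the axiom \meqref{4.44}, and \meqref{4.51} is \meqref{5.8}. For \meqref{4.52}, I would take $A = a\succ_{z_1}(b\prec_{z_2}c)$, $B = b\prec_{z_2}(a\succ_{z_1}c+a\prec_{z_1}c)$, and $C = (a\succ_{z_0}b)\prec_{z_2}c$; then \meqref{4.50} is \meqref{4.43} and \meqref{4.51} is \meqref{5.7}. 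For \meqref{4.54}, take $A = a\prec_{z_1}(b\prec_{z_2}c+b\succ_{z_2}c)$, $B = b\succ_{z_2}(a\prec_{z_1}c)$, and $C = (a\prec_{z_0}b)\prec_{z_2}c$; then \meqref{4.51} is exactly \meqref{5.6}, while \meqref{4.50} is the axiom \meqref{4.43} after the variable swap $(a,z_1) \leftrightarrow (b,z_2)$ and multiplication by $(-1)^N$. In each case, one must verify that the chosen series lie in the formal spaces $V((z_1))((z_2))$, $V((z_2))((z_1))$, and $V((z_2))((z_0))$, respectively, which follows from the truncation properties of $\prec_z$ and $\succ_z$. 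Then Lemma~\mref{lm4.9} directly yields the corresponding Jacobi identity.

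For the mutual equivalence of \meqref{4.52}, \meqref{4.53}, and \meqref{4.54} in a dendriform vertex algebra, I would combine Corollary~\mref{cor4.7} with Proposition~\mref{prop4.6}: the $D$-structure \meqref{4.38}--\meqref{4.39} makes the weak commutativity axioms \meqref{4.43}--\meqref{4.44} equivalent to the weak associativity axioms \meqref{5.6}--\meqref{5.8}. Applying Lemma~\mref{lm4.9} in reverse, each of the three Jacobi identities is equivalent to a pair consisting of one weak commutativity axiom and one weak associativity axiom; under the $D$-structure, the converse direction of Proposition~\mref{prop4.6} then propagates this pair to the full set of axioms, so all three identities are equivalent (indeed, all true) in the dendriform vertex algebra setting.

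The main obstacle will be the careful bookkeeping of the $A, B, C$ triples and the variable substitutions, especially for \meqref{4.54}, where the weak commutativity condition is not literally one of the listed axioms but a relabeled form of \meqref{4.43}. A useful consistency check is that summing \meqref{4.52}, \meqref{4.53}, and \meqref{4.54} reproduces the standard Jacobi identity for the vertex operator $Y(a,z)b = a\prec_z b + a\succ_z b$ from Theorem~\mref{thm4.5}, confirming that the decomposition of $Y$ into $\prec_z$ and $\succ_z$ refines the Jacobi identity into three compatible pieces.
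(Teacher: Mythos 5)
Your treatment of the first assertion is correct and is essentially the paper's own argument: for each of \meqref{4.52}--\meqref{4.54} you feed the same triples $(A,B,C)$ into Lemma~\mref{lm4.9}, with the condition \meqref{4.50} supplied by \meqref{4.43} or \meqref{4.44} (for \meqref{4.54}, by \meqref{4.43} relabeled via $(a,z_1)\leftrightarrow(b,z_2)$ up to the sign $(-1)^N$) and the condition \meqref{4.51} supplied by \meqref{5.7}, \meqref{5.8}, \meqref{5.6} respectively; the truncation remark takes care of the formal-space hypotheses. This is exactly how the paper proves the three identities.

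The gap is in the mutual-equivalence part. First, the equivalence is meant in the strong sense that underlies Corollary~\mref{coro4.14}: assuming only the $D$-structure \meqref{4.38}--\meqref{4.39}, each single Jacobi identity should imply the other two; so the parenthetical ``indeed, all true'' for a dendriform vertex algebra does not discharge the claim. Second, your propagation step is not justified: Proposition~\mref{prop4.6} as stated equates the \emph{full} set $\{\meqref{5.6},\meqref{5.7},\meqref{5.8}\}$ with the \emph{full} set $\{\meqref{4.43},\meqref{4.44}\}$, and says nothing about a single pair consisting of one commutativity axiom and one associativity axiom generating the rest. What the proof of Proposition~\mref{prop4.6} actually yields, under \meqref{4.38}--\meqref{4.39}, are the individual correspondences \meqref{5.6}$\Leftrightarrow$\meqref{4.43}, \meqref{5.7}$\Leftrightarrow$\meqref{4.44}, \meqref{5.8}$\Leftrightarrow$\meqref{4.43}. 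With these your scheme does give \meqref{4.52}$\Leftrightarrow$\meqref{4.53}, since both decompose into \meqref{4.43} together with \meqref{4.44}; but it fails precisely at \meqref{4.54}, whose Lemma~\mref{lm4.9} pair is the relabeled \meqref{4.43} together with \meqref{5.6}, and \emph{both} members of that pair are equivalent to \meqref{4.43} alone. Nothing in your argument produces \meqref{4.44} (equivalently \meqref{5.7}) from \meqref{4.54}, so you cannot conclude \meqref{4.54}$\Rightarrow$\meqref{4.53} or \meqref{4.54}$\Rightarrow$\meqref{4.52}. The paper does not argue through the $(A,B,C)$ decomposition at all: it transforms one three-term Jacobi identity into another wholesale, applying the skew-symmetry \meqref{4.38}, the conjugation formula \meqref{4.45}, and the substitution properties of the formal delta functions (the $S_3$-symmetry argument of FHL), first for \meqref{4.52}$\Leftrightarrow$\meqref{4.53} and then, by the same method, for \meqref{4.54}$\Leftrightarrow$\meqref{4.53}. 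Any repair of your route for \meqref{4.54} would have to carry out such delta-function manipulations, or else prove directly that \meqref{4.43} implies \meqref{4.44} under the $D$-structure, and your proposal supplies neither.
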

    \begin{proof}
By Proposition \mref{prop4.6} and the formulas \meqref{5.6}-\meqref{5.8}, we have
        \begin{align*}
            (z_0+z_2)^k a\succ_{z_0+z_2}(b\prec_{z_2}c)&=(z_0+z_2)^k (a\succ_{z_0}b)\prec_{z_2}c,\\
            (z_1-z_2)^la\succ_{z_1}(b\prec_{z_2}c)&=(z_1-z_2)^l b\prec_{z_2}(a\succ_{z_1}c+a\prec_{z_1}c),
        \end{align*}
for some $k,l\in \N$. Then $A(z_1,z_2)=a\succ_{z_1}(b\prec_{z_2}c)$, $B(z_1,z_2)=b\prec_{z_2}(a\succ_{z_1}c+a\prec_{z_1}c)$, and $C(z_0,z_2)=(a\succ_{z_0}b)\prec_{z_2}c$ satisfy the conditions \meqref{4.50} and \meqref{4.51} in Lemma \mref{lm4.9}. Then the Jacobi identity \meqref{4.52} follows from \meqref{4.49}.

Similarly, the Jacobi identity \meqref{4.53} follows from Lemma \mref{lm4.9} and
        \begin{align*}
            (z_0+z_2)^k a\succ_{z_0+z_2}(b\succ_{z_2}c)&=(z_0+z_2)^k (a\succ_{z_0}b+a\prec_{z_0}b)\succ_{z_2}c,\\
            (z_1-z_2)^l a\succ_{z_1}(b\succ_{z_2}c)&=(z_1-z_2)^l b\succ_{z_2}(a\succ_{z_1}c),
        \end{align*}
for some $k,l\in \N$.
        The Jacobi identity \meqref{4.54} follows from Lemma \mref{lm4.9} and
        \begin{align*}
            (z_0+z_2)^k a\prec_{z_0+z_2}(b\succ_{z_2}c+b\prec_{z_2}c)&=(z_0+z_2)^k (a\prec_{z_0}b)\prec_{z_2}c,\\
            (z_1-z_2)^l a\prec_{z_1}(b\prec_{z_2}c+b\succ_{z_2}c)&=(z_1-z_2)^l b\succ_{z_2}(a\prec_{z_1}c),
        \end{align*}
for some $k,l\in \N$.

Now let $(V,\prec_z,\succ_z)$ be a dendriform vertex algebra. The equivalency of these Jacobi identities essentially corresponds to the $S_3$-symmetry of the Jacobi identity, see Section 2.7 in \mcite{FHL}.
The proof is also similar as follows.

Assume that \meqref{4.52} holds. By the skew-symmetry \meqref{4.38}, we have
{\small
        \begin{align*}
            &z^{-1}_0\delta\left(\frac{z_1-z_2}{z_0}\right)a\succ_{z_1}e^{z_2D}(c\succ_{-z_2}b)-    z^{-1}_0\delta\left(\frac{-z_2+z_1}{z_0}\right) e^{z_2D}(a\succ_{z_1}c+a\prec_{z_1}c)\succ_{-z_2}b\\
                &=z^{-1}_2\delta\left(\frac{z_1-z_0}{z_2}\right) (a\succ_{z_0}b)\prec_{z_2}c=z^{-1}_2\delta\left(\frac{z_1-z_0}{z_2}\right) e^{z_2D}c\succ_{-z_2}(a\succ_{z_0}b).
        \end{align*}
}
Then by \meqref{4.45} and properties of the formal $\delta$-functions (see Section 2.1 in \mcite{FHL}), we have
{\small
        \begin{align*}
        &z^{-1}_1\delta\left(\frac{z_2+z_0}{z_1}\right) c\succ_{-z_2}(a\succ_{z_0}b)=z^{-1}_2\delta\left(\frac{z_1-z_0}{z_2}\right) c\succ_{-z_2}(a\succ_{z_0}b)\\
        &=z^{-1}_0\delta\left(\frac{z_1-z_2}{z_0}\right)a\succ_{z_1-z_2}(c\succ_{-z_2}b)-   z^{-1}_0\delta\left(\frac{-z_2+z_1}{z_0}\right) (a\succ_{z_1}c+a\prec_{z_1}c)\succ_{-z_2}b\\
    &=  z^{-1}_1\delta\left(\frac{z_0+z_2}{z_1}\right)a\succ_{z_0}(c\succ_{-z_2}b)- z^{-1}_0\delta\left(\frac{-z_2+z_1}{z_0}\right) (a\succ_{z_1}c+a\prec_{z_1}c)\succ_{-z_2}b.
        \end{align*}
}
Changing the formal variables $(z_0,z_1,z_2)\mapsto (w_1,w_0,-w_2)$ in the equations above, we obtain
{\small
        \begin{align*}
        &w^{-1}_0\delta\left(\frac{-w_2+w_1}{w_0}\right) c\succ_{w_2}(a\succ_{w_1}b)\\
        &=w^{-1}_0\delta\left(\frac{w_1-w_2}{w_0}\right)a\succ_{w_1}(c\succ_{w_2}b)-    w^{-1}_1\delta\left(\frac{w_2+w_0}{w_1}\right) (a\succ_{w_0}c+a\prec_{w_0}c)\succ_{w_2}b.
        \end{align*}
}
This equation is the same as \meqref{4.53} under the change of variables $(c,b)$ to $(b,c)$. This shows the equivalence of \meqref{4.52} and \meqref{4.53}. The equivalency of the equations \meqref{4.54} and \meqref{4.53} can be proved by a similar method. Thus \meqref{4.52},  \meqref{4.53} and \meqref{4.54} are mutually equivalent.
\delete{ Then
{\small
        \begin{align*}
        &z^{-1}_2\delta\left(\frac{z_1-z_0}{z_2}\right) (a\prec_{z_0}b)\prec_{z_2}c=z^{-1}_1\delta\left(\frac{z_2+z_0}{z_1}\right) e^{z_2D}e^{z_0D}e^{-z_0D}c\succ_{-z_2}(e^{z_0D}b\succ_{-z_0}a)\\
        &=z^{-1}_1\delta\left(\frac{z_2+z_0}{z_1}\right)e^{z_1D}c\succ_{-z_1}(b\succ_{-z_0}a)\\
    &=  z^{-1}_0\delta\left(\frac{z_1-z_2}{z_0}\right)e^{z_1D}(b\prec_{z_2}c+b\succ_{z_2}c)\succ_{-z_1}a-   z^{-1}_0\delta\left(\frac{-z_2+z_1}{z_0}\right) b\succ_{z_2}e^{z_1D}(c\succ_{-z_1}a).
        \end{align*}
}
Hence by the properties of the formal $\delta$-function, we obtain
{\small     \begin{align*}
    &   z^{-1}_2\delta\left(\frac{z_1-z_0}{z_2}\right)c\succ_{-z_1}(b\succ_{-z_0}a)=z^{-1}_1\delta\left(\frac{z_2+z_0}{z_1}\right)c\succ_{-z_1}(b\succ_{-z_0}a)\\
    &=z^{-1}_0\delta\left(\frac{z_1-z_2}{z_0}\right)(b\prec_{z_2}c+b\succ_{z_2}c)\succ_{-z_1}a- z^{-1}_0\delta\left(\frac{-z_2+z_1}{z_0}\right) b\succ_{z_2-z_1}(c\succ_{-z_1}a)\\
    &=z^{-1}_1\delta\left(\frac{z_0+z_2}{z_1}\right)(b\prec_{z_2}c+b\succ_{z_2}c)\succ_{-z_1}a+ z^{-1}_2\delta\left(\frac{-z_0+z_1}{z_2}\right) b\succ_{-z_0}(c\succ_{-z_1}a).
        \end{align*}
}
Under the change of the variables $(z_0,z_1,z_2)\mapsto (-w_1,-w_2,w_0)$, the above equation becomes
{\small     \begin{align*}
        &w^{-1}_0\delta\left(\frac{-w_2+w_1}{w_0}\right)c\succ_{w_2}(b\succ_{w_1}a)\\
        &=-w^{-1}_2\delta\left(\frac{w_1-w_0}{w_2}\right)(b\prec_{w_2}c+b\succ_{w_2}c)\succ_{w_2}a+ w^{-1}_0\delta\left(\frac{w_1-w_2}{w_0}\right) b\succ_{w_1}(c\succ_{w_2}a).
        \end{align*}
    }
It is obvious that this equation is the same as \meqref{4.53}. Hence the equations \meqref{4.54} and \meqref{4.53} are equivalent.} 
    \end{proof}
    \begin{remark}
By adding the three Jacobi identities \meqref{4.52}--\meqref{4.54}, we derive the Jacobi identity for the vertex operator $Y(a,z)=a\prec_z b+a\succ_z b$, giving an alternative proof of Theorem \mref{thm4.5}
    \end{remark}
On the other hand, since the Jacobi identity \meqref{4.49} can also give rise to \meqref{4.50} and \meqref{4.51}, we have the following characterization of a dendriform vertex Leibniz algebra:

\begin{coro}\mlabel{coro4.13}
    A dendriform vertex Leibniz algebra is a vector space $V$, equipped with two linear operators $\prec_z,\succ_{z}:V\ra \Hom(V,V((z)))$, satisfying the Jacobi identities \meqref{4.52}, \meqref{4.53}, and \meqref{4.54}.
    \end{coro}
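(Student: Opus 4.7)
The forward direction is already established by Theorem~\mref{thm4.10}, so the plan is to prove the converse: if $(V,\prec_z,\succ_z)$ with $\prec_z,\succ_z:V\to\Hom(V,V((z)))$ satisfies the three Jacobi identities \meqref{4.52}, \meqref{4.53}, \meqref{4.54}, then all the defining axioms \meqref{5.6}--\meqref{5.8}, \meqref{4.43}, \meqref{4.44} of a dendriform vertex Leibniz algebra follow. The central tool is Lemma~\mref{lm4.9}: a Jacobi-type identity of the form \meqref{4.49} is equivalent to the conjunction of a weak-commutativity extract \meqref{4.50} and a weak-associativity extract \meqref{4.51}. The strategy is simply to apply this lemma separately to each of the three Jacobi identities, with the appropriate choice of $A(z_1,z_2)$, $B(z_1,z_2)$, $C(z_0,z_2)$, and read off the resulting axioms.

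Concretely, for \meqref{4.52} I would set $A(z_1,z_2)=a\succ_{z_1}(b\prec_{z_2}c)$, $B(z_1,z_2)=b\prec_{z_2}(a\succ_{z_1}c+a\prec_{z_1}c)$, $C(z_0,z_2)=(a\succ_{z_0}b)\prec_{z_2}c$; the weak-commutativity output \meqref{4.50} then becomes \meqref{4.43}, and the weak-associativity output \meqref{4.51} becomes \meqref{5.7}. For \meqref{4.53}, taking $A=a\succ_{z_1}(b\succ_{z_2}c)$, $B=b\succ_{z_2}(a\succ_{z_1}c)$, $C=(a\succ_{z_0}b+a\prec_{z_0}b)\succ_{z_2}c$ yields \meqref{4.44} and \meqref{5.8}. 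For \meqref{4.54}, taking $A=a\prec_{z_1}(b\prec_{z_2}c+b\succ_{z_2}c)$, $B=b\succ_{z_2}(a\prec_{z_1}c)$, $C=(a\prec_{z_0}b)\prec_{z_2}c$ yields, on the associativity side, exactly \meqref{5.6}; on the commutativity side one gets a relation which, after the substitution $(a,b)\leftrightarrow(b,a)$ and $z_1\leftrightarrow z_2$, reproduces \meqref{4.43} (hence no new information, but it is consistent). Collecting these outputs gives all five defining identities of a dendriform vertex Leibniz algebra.

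Before invoking Lemma~\mref{lm4.9}, the one bookkeeping check I would carry out is that for each choice above the series $A$, $B$, $C$ indeed lie in $U((z_1))((z_2))$, $U((z_2))((z_1))$, and $U((z_2))((z_0))$ respectively, where $U=V$. This is immediate because the codomains of $\prec_z$ and $\succ_z$ are $\Hom(V,V((z)))$ by hypothesis, so each single partial operator application is truncated from below in its variable, and a second application then produces an iterated Laurent series in the outer variable over the inner one. The truncation property required of $\prec_z,\succ_z$ in Definition~\mref{df4.3} is therefore built in and needs no separate verification.

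The main obstacle I anticipate is purely notational: matching the ordered triples $(z_0,z_1,z_2)$ in Lemma~\mref{lm4.9} to the variables appearing in each Jacobi identity, and verifying that the weak-commutativity extract produced by \meqref{4.54} really collapses (after relabeling) to \meqref{4.43} rather than to some genuinely new axiom. Once this matching is done, the proof is essentially a direct three-fold application of Lemma~\mref{lm4.9} combined with Theorem~\mref{thm4.10} for the reverse implication.
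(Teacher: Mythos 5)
Your proposal is correct and follows essentially the same route as the paper: the forward direction is Theorem~\ref{thm4.10}, and the converse is exactly the paper's (brief) justification, namely applying the "only if" part of Lemma~\ref{lm4.9} to each of the three Jacobi identities \eqref{4.52}--\eqref{4.54} to recover \eqref{4.43}, \eqref{5.7}, then \eqref{4.44}, \eqref{5.8}, then \eqref{5.6} (plus a redundant copy of \eqref{4.43}). Your identifications of $A$, $B$, $C$ and the truncation bookkeeping are accurate, so nothing is missing.
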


By Theorem~\mref{thm4.10}, we also obtain a second equivalent condition for dendriform vertex algebras, in addition to Proposition~\mref{prop4.6}.
\begin{coro}\mlabel{coro4.14}
        A dendriform vertex algebra is a vector space $V$, equipped with a linear map $D:V\ra V$ and two linear operators $\prec_z,\succ_z: V\ra \Hom(V,V((z)))$, satisfying \meqref{4.38}, \meqref{4.39}, and the Jacobi identity \meqref{4.53}.
    \end{coro}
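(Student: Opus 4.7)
The plan is to prove both directions by chaining together Proposition~\mref{prop4.6}, Theorem~\mref{thm4.10}, and Lemma~\mref{lm4.9}. The forward direction is almost immediate: if $(V,\prec_z,\succ_z,D)$ is a dendriform vertex algebra in the sense of Definition~\mref{df4.3}, then \meqref{4.38} and \meqref{4.39} hold by definition; moreover, by Corollary~\mref{cor4.7} the triple $(V,\prec_z,\succ_z)$ is a dendriform vertex Leibniz algebra, so Theorem~\mref{thm4.10} furnishes the Jacobi identity \meqref{4.53}.

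For the converse, I would assume \meqref{4.38}, \meqref{4.39}, and \meqref{4.53}, and aim to recover the dendriform field axioms \meqref{5.6}--\meqref{5.8}. By Proposition~\mref{prop4.6}, it suffices to verify only the two weak-commutativity-type identities \meqref{4.43} and \meqref{4.44}. Applying Lemma~\mref{lm4.9} directly to the Jacobi identity \meqref{4.53} with
\[
A(z_1,z_2)=a\succ_{z_1}(b\succ_{z_2}c),\ B(z_1,z_2)=b\succ_{z_2}(a\succ_{z_1}c),\ C(z_0,z_2)=(a\succ_{z_0}b+a\prec_{z_0}b)\succ_{z_2}c
\]
simultaneously extracts \meqref{4.44} (as the weak-commutativity piece \meqref{4.50}) and, as a bonus, the dendriform field axiom \meqref{5.8} (as the weak-associativity piece \meqref{4.51}).

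To extract the remaining identity \meqref{4.43}, I would invoke the $S_3$-symmetry step from the proof of Theorem~\mref{thm4.10}, which establishes the mutual equivalence of \meqref{4.52}, \meqref{4.53}, and \meqref{4.54} using only the skew-symmetry \meqref{4.38} and the conjugation formula \meqref{4.45}, the latter being a direct consequence of \meqref{4.39}. Since none of \meqref{5.6}--\meqref{5.8} enter that argument, it remains valid under our current hypotheses, so \meqref{4.52} also holds. A second application of Lemma~\mref{lm4.9} to \meqref{4.52} then yields \meqref{4.43} (and \meqref{5.7} as a byproduct), completing the converse. The main -- and rather mild -- obstacle is ensuring that the $S_3$-symmetry manipulation in Theorem~\mref{thm4.10} does not secretly use any of the dendriform field axioms; a direct inspection confirms this, as that step is purely a relabeling of formal variables together with standard $\delta$-function identities and the conjugation formula \meqref{4.45}.
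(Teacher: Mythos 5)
Your proposal is correct and follows essentially the route the paper intends: the corollary is derived from Theorem~\ref{thm4.10} (whose $S_3$-equivalence step indeed uses only the skew-symmetry \eqref{4.38} and the conjugation formula \eqref{4.45} coming from \eqref{4.39}), combined with Lemma~\ref{lm4.9} to extract the weak-commutativity identities and Proposition~\ref{prop4.6} to conclude. Your explicit check that the equivalence of \eqref{4.52}--\eqref{4.54} does not presuppose the dendriform field axioms is exactly the point that makes the converse legitimate, so nothing is missing.
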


\subsection{The modules structures induced by dendriform vertex (Leibniz) algebras}

An usual dendriform associative algebra $(A,\prec,\succ)$ defined
by \meqref{4.30}--\meqref{4.32} gives rise to a bi-module structure $(A,L_\succ,R_\prec)$ of $(A,\cdot)$ on
$A$ itself, where $a\cdot b:=a\prec b+a\succ b$,
$L_{\succ}(a)(b):=a\succ b$, and $R_{\prec}(a)(b):=a\prec b$, for
all $a,b\in A$. See \mcite{B3,BGN2} for more details.

It is natural to expect a similar result to be true for our definition of the dendriform vertex (Leibniz) algebras in Definition~\ref{df4.3}. However, unlike the associative algebra case, vertex algebra also satisfies the weak commutativity and a module over a vertex algebra is more like a module over Lie algebras. In fact, with Theorem~\mref{thm4.10}, we indeed have a module structure induced by a dendriform vertex algebra. First, we recall the following definition. See Definition 2.9 in \mcite{LTW}.

\begin{df}\mlabel{df4.14}
    Let $(V,Y,D)$ be a vertex algebra without vacuum. A \name{$V$-module} $(W,Y_W)$ is a vector space $W$, equipped with a linear map $Y_W:V\ra (\End W)[[z,z^{-1}]]$, satisfying the truncation property, the Jacobi identity for $Y_W$ in Definition \mref{df2.13}, and
\begin{equation}
Y_W(Da,z)=\frac{d}{dz}Y_W(a,z),\quad a\in V.
\end{equation}
    \end{df}
\begin{prop}\mlabel{prop4.15}
Let $(V,\prec_z,\succ_z,D)$ be a dendriform vertex algebra, and let $(V,Y,D)$ be the associated vertex algebra without vacuum, where $Y$ is given by \meqref{5.11}: $Y(a,z)b=a\prec_z b+a\succ_zb$. Let $W=V$, and define
    \begin{equation}\mlabel{4.58'}
    Y_W: V\ra (\End W)[[z,z^{-1}]],\quad Y_W(a,z)b:=a\succ_{z}b, \quad a\in V, b\in W.
    \end{equation}
Then $(W,Y_W)$ is a module over $(V,Y,D)$.
    \end{prop}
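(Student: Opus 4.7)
The plan is to verify directly the three axioms in Definition~\mref{df4.14} for the candidate module map $Y_W(a,z)b = a\succ_{z}b$, where $W=V$. Each of the three axioms will correspond to a property of the partial operator $\succ_z$ that is already available from the hypotheses.

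First, the truncation property $Y_W(a,z)b \in W((z))$ is immediate from the definition of $\succ_z: V\ra \Hom(V,V((z)))$ in Definition~\mref{df4.3}.

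Next, the $D$-derivative property $Y_W(Da,z) = \frac{d}{dz}Y_W(a,z)$ amounts to the identity $(Da)\succ_z b = \frac{d}{dz}(a\succ_z b)$. Since $(V,\prec_z,\succ_z,D)$ is a dendriform vertex algebra, it satisfies the skew-symmetry \meqref{4.38} and the $D$-bracket derivative property \meqref{4.39}. By Lemma~\mref{lm4.4}, the $D$-bracket derivative property under skew-symmetry is equivalent to the $D$-derivative property \meqref{4.40'}, which yields exactly what we need.

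The main point is the Jacobi identity. Writing $Y(a,z)b = a\prec_z b + a\succ_z b$ and $Y_W(a,z)b = a\succ_z b$, the desired Jacobi identity for $Y_W$ expands to
\begin{align*}
& z_0^{-1}\delta\!\left(\tfrac{z_1-z_2}{z_0}\right) a\succ_{z_1}(b\succ_{z_2}u) - z_0^{-1}\delta\!\left(\tfrac{-z_2+z_1}{z_0}\right) b\succ_{z_2}(a\succ_{z_1}u) \\
&\qquad = z_2^{-1}\delta\!\left(\tfrac{z_1-z_0}{z_2}\right) (a\succ_{z_0}b + a\prec_{z_0}b)\succ_{z_2}u.
\end{align*}
This is precisely the identity \meqref{4.53} of Theorem~\mref{thm4.10}. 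Since $(V,\prec_z,\succ_z,D)$ is a dendriform vertex algebra, Corollary~\mref{cor4.7} ensures that $(V,\prec_z,\succ_z)$ is in particular a dendriform vertex Leibniz algebra, so Theorem~\mref{thm4.10} applies and \meqref{4.53} holds. There is no real obstacle here; the main observation is simply recognizing that the axioms of a dendriform vertex algebra have been set up so that \meqref{4.53} matches verbatim the Jacobi identity required of the module map $Y_W = \succ_z$. This concludes the verification that $(W,Y_W)$ is a module over the vertex algebra without vacuum $(V,Y,D)$.
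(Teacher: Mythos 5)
Your proposal is correct and follows essentially the same route as the paper: truncation from Definition~\mref{df4.3}, the $D$-derivative property via Lemma~\mref{lm4.4}, and the module Jacobi identity recognized as exactly the identity \meqref{4.53} of Theorem~\mref{thm4.10}. The only difference is that you explicitly cite Corollary~\mref{cor4.7} to justify applying Theorem~\mref{thm4.10}, a step the paper leaves implicit.
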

\begin{proof}
By Definition \mref{df4.3}, clearly $Y_W$ satisfies the truncation
property. By the Jacobi identity \meqref{4.53} of
$(V,\prec_z,\succ_z,D)$, \meqref{4.58'} and
\meqref{5.11}, we have {\small
        \begin{align*}
        &z^{-1}_0\delta\left(\frac{z_1-z_2}{z_0}\right)Y_W(a,z_1)Y_W(b,z_2)c    -z^{-1}_0\delta\left(\frac{-z_2+z_1}{z_0}\right) Y_W(b,z_2)Y_W(a,z_1)c
        =z^{-1}_2\delta\left(\frac{z_1-z_0}{z_2}\right) Y_W(Y(a,z_0)b,z_2)c,
    \end{align*}
}
for $a, b\in V, c\in W=V$. Finally, by Lemma \mref{lm4.4}, we have $$Y_{W}(Da,z)b=(Da)\succ_z b=\frac{d}{dz}a\succ_zb, \quad a\in V, b\in W.$$
Thus, $(W,Y_W)$ is a module over the vertex algebra without vacuum $(V,Y,D)$, according to Definition \mref{df4.14}.
    \end{proof}
\begin{remark}\mlabel{rk4.16}
Since we define $Y_W$ by one of the partial operator $\succ_z$ in \meqref{4.58'}, it is natural to consider the vertex operator $\Y$ defined by the other partial operator $\Y(a,z)b=a\prec_z b$. By the skew-symmetry \meqref{4.38}, we have
        \begin{equation}\mlabel{4.59'}
        \Y(a,z)b=a\prec_z b=e^{zD}(b\succ_{-z}a)=e^{zD}Y_W(b,-z)a=Y_{WV}^W(a,z)b,   \end{equation}
where $Y_{WV}^W$ is defined by the skew-symmetry formula (see Section 5 in \mcite{FHL}). 
        i.e., $\Y=Y_{WV}^W$. It is easy to see that the Jacobi identities \meqref{4.52} and \meqref{4.54} correspond to the following equation.
        \begin{align*}z
        &z^{-1}_0\delta\left(\frac{z_1-z_2}{z_0}\right)Y_W(a,z_1)Y_{WV}^W(b,z_2)c-  z^{-1}_0\delta\left(\frac{-z_2+z_1}{z_0}\right) Y_{WV}^W(b,z_2)Y(a,z_1)c\\
        &=z^{-1}_2\delta\left(\frac{z_1-z_0}{z_2}\right) Y_{WV}^{W}(Y_{W}(a,z_0)b,z_2)c,\quad a,b,c\in V=W.
    \end{align*}
Moreover, $Y_{WV}^W(Da,z)b=(Da)\prec_zb=\frac{d}{dz}a\prec_zb$ by Lemma \mref{lm4.4}. Thus, if the vertex algebra without vacuum $(V,Y,D)$ is an underlying structure of some VOA $(V,Y,\vac,\om)$, with $D=L(-1)$, then $Y_{WV}^W(a,z)b=a\prec_zb$ is an intertwining operator of type $\fusion{W}{V}{W}$.
    \end{remark}
\begin{coro} \mlabel{co:denrb}
    Let $(V,Y,\vac,\om)$ be a VOA. Assume that the underlying vertex algebra without vacuum structure $(V,Y,D=L(-1))$ is induced from a dendriform vertex algebra structure $(V,\prec_z,\succ_z,D)$ by Theorem \mref{thm4.5}. Let $(W,Y_W)$ be the weak $V$-module given by Proposition \mref{prop4.15}. Then the identity map $T=\Id: W\ra V$ is a relative RBO.
    \end{coro}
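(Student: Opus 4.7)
The plan is to unwind the defining identity for a relative RBO and recognize it as the splitting identity $Y = \prec_z + \succ_z$ from Theorem~\mref{thm4.5}. Specifically, with $T = \Id$ and $W = V$ as vector spaces, the relative RBO equation
\begin{equation*}
Y(Tu,z)Tv = T\bigl(Y_W(Tu,z)v\bigr) + T\bigl(Y_{WV}^W(u,z)Tv\bigr),\quad u,v\in W,
\end{equation*}
collapses to the claim that $Y(u,z)v = Y_W(u,z)v + Y_{WV}^W(u,z)v$ for all $u,v\in V$. So the task is purely to identify the two module operators on the right with the two partial products $\succ_z$ and $\prec_z$.

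For the first summand, I would cite Proposition~\mref{prop4.15} directly: it defines $Y_W(u,z)v := u\succ_z v$ and verifies that this makes $(W,Y_W)$ a module over $(V,Y,D)$. For the second summand, I would invoke Remark~\mref{rk4.16}, where the skew-symmetry relation \meqref{4.38} between $\prec_z$ and $\succ_z$ is used to compute
\begin{equation*}
Y_{WV}^W(u,z)v \;=\; e^{zD}Y_W(v,-z)u \;=\; e^{zD}(v\succ_{-z}u) \;=\; u\prec_z v,
\end{equation*}
so that the intertwining operator of type $\binom{W}{V\;W}$ coming from $Y_W$ is exactly the partial product $\prec_z$.

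Combining these two identifications and using the defining formula \meqref{5.11} of Theorem~\mref{thm4.5}, the right-hand side of the relative RBO identity becomes
\begin{equation*}
u\succ_z v + u\prec_z v \;=\; Y(u,z)v,
\end{equation*}
which matches the left-hand side. Hence $T=\Id$ is a relative Rota--Baxter operator from $(W,Y_W)$ to $V$. There is no real obstacle here: the conceptual work has already been done in building $Y_W$ and recognizing $\prec_z$ as the skew-symmetric partner $Y_{WV}^W$; the corollary is simply the observation that the splitting $Y = \prec_z + \succ_z$ \emph{is} the relative RBO axiom for $T=\Id$, thereby further justifying the proposed notion of dendriform vertex algebra.
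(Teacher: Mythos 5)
Your proposal is correct and follows essentially the same route as the paper: the paper's proof likewise reduces the relative RBO identity for $T=\Id$ to $Y(u,z)v = u\prec_z v + u\succ_z v = Y_W(u,z)v + Y_{WV}^W(u,z)v$, citing \meqref{5.11}, \meqref{4.58'} and the identification $Y_{WV}^W(a,z)b = a\prec_z b$ from \meqref{4.59'} in Remark~\mref{rk4.16}. Your use of the skew-symmetry \meqref{4.38} to recover that identification is exactly how the paper obtains \meqref{4.59'}, so there is nothing to add.
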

\begin{proof}
    By \meqref{5.11}, \meqref{4.58'}, \meqref{4.59'}, and the assumption that $T=\Id$, we have
\begin{align*}
Y(Tu,z)Tv&=u\prec_z v+u\succ_z v=Y_W(u,z)v+Y_{WV}^W(u,z)v\\
&=T(Y_W(Tu,z)v)+T(Y_{WV}^W(u,z)Tv),\quad u,v\in W.
\end{align*}
So $T=\Id$ is a relative RBO as defined in \meqref{4.60'}
    \end{proof}
Conversely, we have the following relation between dendriform vertex algebras and vertex algebra without vacuum, which gives a characterization of the dendriform vertex algebras.

\begin{prop}\mlabel{prop4.19}
    Let $V$ be a vector space, equipped with linear maps $\prec_z,\succ_{z}: V\ra \Hom(V,V((z)))$, and $D:V\ra V$. Define $Y(a,z)b:=a\prec_zb+a\succ_zb$ for all $a,b\in V$ as in \meqref{5.11}. Then $(V,\prec_z,\succ_z,D)$ is a dendriform vertex algebra if and only if the following conditions are satisfied.
    \begin{enumerate}
    \item $(V,Y,D)$ is a vertex algebra without vacuum.
    \item $\prec_z$, $\succ_z$, and $D$ satisfy the equations \meqref{4.38} and \meqref{4.39}.
    \item $(V, \succ_z, D)$ defines a module structure of $(V,Y,D)$ on $V$ itself.
    \end{enumerate}
    \end{prop}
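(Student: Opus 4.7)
The plan is to split the equivalence into its two directions.

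For the ``only if'' direction, suppose $(V,\prec_z,\succ_z,D)$ is a dendriform vertex algebra. Then condition (2) is immediate from Definition~\mref{df4.3}(3), in which \meqref{4.38} and \meqref{4.39} are postulated. Condition (1), namely that $(V,Y,D)$ with $Y(a,z)b=a\prec_z b+a\succ_z b$ is a vertex algebra without vacuum, is precisely Theorem~\mref{thm4.5}(3). Condition (3), namely that $(V,\succ_z,D)$ equips $V$ with a module structure over $(V,Y,D)$, is exactly Proposition~\mref{prop4.15}. So this direction requires no new work beyond quoting results already established.

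For the converse, my strategy is to reduce to Corollary~\mref{coro4.14}, which characterizes a dendriform vertex algebra as a quadruple $(V,\prec_z,\succ_z,D)$ satisfying \meqref{4.38}, \meqref{4.39}, and the single Jacobi identity \meqref{4.53}. Equations \meqref{4.38} and \meqref{4.39} are supplied by hypothesis (2), so the heart of the matter is to derive \meqref{4.53} from (1) and (3). The idea is that the module Jacobi identity from Definition~\mref{df2.13}, applied to $Y_W(a,z)b:=a\succ_z b$, reads
\begin{align*}
&z_0^{-1}\delta\left(\frac{z_1-z_2}{z_0}\right)Y_W(a,z_1)Y_W(b,z_2)c - z_0^{-1}\delta\left(\frac{-z_2+z_1}{z_0}\right)Y_W(b,z_2)Y_W(a,z_1)c \\
&= z_2^{-1}\delta\left(\frac{z_1-z_0}{z_2}\right)Y_W(Y(a,z_0)b,z_2)c,
\end{align*}
for $a,b\in V$ and $c\in W=V$. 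Substituting $Y_W(\cdot,z)=\cdot\succ_z\cdot$ in every occurrence on the left, and expanding $Y(a,z_0)b=a\prec_{z_0}b+a\succ_{z_0}b$ on the right via hypothesis (1) together with \meqref{5.11}, recovers \meqref{4.53} verbatim. Corollary~\mref{coro4.14} then delivers the desired dendriform vertex algebra structure.

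Since this proposition is essentially a packaging/characterization result, I do not foresee a serious obstacle. The only consistency point worth flagging is that the $D$-derivative property $Y_W(Da,z)=\frac{d}{dz}Y_W(a,z)$ required of modules in Definition~\mref{df4.14} must be compatible with the $D$-bracket derivative property \meqref{4.39}; by Lemma~\mref{lm4.4} the two are equivalent once the skew-symmetry \meqref{4.38} is in force, so hypothesis (2) automatically matches up the notion of $D$-translation invariance on the vertex-algebra side with that on the module side, and no separate verification is needed.
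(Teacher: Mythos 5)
Your proposal is correct and follows essentially the same route as the paper: the forward direction is quoted from Theorem~\mref{thm4.5}, Definition~\mref{df4.3} and Proposition~\mref{prop4.15}, and the converse extracts the Jacobi identity \meqref{4.53} from the module structure with $Y_W(a,z)b=a\succ_z b$ and $Y(a,z_0)b=a\prec_{z_0}b+a\succ_{z_0}b$, then invokes Corollary~\mref{coro4.14}. (Your closing remark about Lemma~\mref{lm4.4} reconciling the module's $D$-derivative property with \meqref{4.39} is a harmless extra observation not needed for the converse.)
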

\begin{proof}
If $(V,\prec_z,\succ_z, D)$ forms a dendriform vertex algebra, then (i), (ii), and (iii) follows from Theorem ~\mref{thm4.5}, Definition~\mref{df4.3}, and Proposition~\mref{prop4.15}, respectively. Conversely, since $\succ_{z}:V\ra \End(V)[[z,z^{-1}]]$ defines a module structure, we have the Jacobi identity:
{\small     \begin{align*}
&z^{-1}_0\delta\left(\frac{z_1-z_2}{z_0}\right)a\succ_{z_1}(b\succ_{z_2}c)- z^{-1}_0\delta\left(\frac{-z_2+z_1}{z_0}\right) b\succ_{z_2}(a\succ_{z_1}c)\\
&=z^{-1}_2\delta\left(\frac{z_1-z_0}{z_2}\right) (Y(a,z_0)b)\prec_{z_2}c\\
&=z^{-1}_2\delta\left(\frac{z_1-z_0}{z_2}\right) (a\succ_{z_0}b+a\prec_{z_0}b)\prec_{z_2}c,\quad a,b,c\in V.
\end{align*}
} Hence $(V,\prec_z,\succ_z,D)$ is a quadruple satisfying
\meqref{4.38}, \meqref{4.39}, and \meqref{4.53}. Then by
Corollary~\mref{coro4.14},  $(V,\prec_z,\succ_z,D)$ is a
dendriform field vertex algebra.
\end{proof}

However, for dendriform vertex Leibniz algebras, since we do not have the skew-symmetry property, the left and right module actions cannot be combined into one action. The notion of left module over a vertex Leibniz algebra was introduced in \mcite{LTW} (see Definition 2.15 there). Inspired by the Jacobi identities in Theorem~\mref{thm4.10} and Proposition~\mref{prop4.15}, we introduce the notion of bi-modules over a vertex Leibniz algebra as follows.

\begin{df}\mlabel{df4.21}
    Let $(V,Y)$ be a vertex Leibniz algebra. A {\bf bi-module over $(V,Y)$} is a triple $(W,Y_W,Y_{WV}^W)$, where $W$ is a vector space, and
    $$Y_W:V\ra \Hom(W,W((z))),\quad Y_{WV}^W: W\ra \Hom(V,W((z)))$$ are linear operators, satisfying the following axioms:
{\small \begin{equation}
\begin{split}   &z^{-1}_0\delta\left(\frac{z_1-z_2}{z_0}\right)Y_W(a,z_1)Y_{WV}^W(b,z_2)c-  z^{-1}_0\delta\left(\frac{-z_2+z_1}{z_0}\right) Y_{WV}^W(b,z_2)Y(a,z_1)c\\
    &=z^{-1}_2\delta\left(\frac{z_1-z_0}{z_2}\right) Y_{WV}^W(Y_W(a,z_0)b,z_2)c,\quad  a,c\in V, b\in W,
\end{split}
\mlabel{4.63}
\end{equation}
\begin{equation}
\begin{split}
&z^{-1}_0\delta\left(\frac{z_1-z_2}{z_0}\right)Y_W(a,z_1)Y_W(b,z_2)c-   z^{-1}_0\delta\left(\frac{-z_2+z_1}{z_0}\right) Y_W(b,z_2)Y_W(a,z_1)c\\
&=z^{-1}_2\delta\left(\frac{z_1-z_0}{z_2}\right) Y_W(Y(a,z_0)b,z_2)c,\quad  a,b\in V, c\in W,
\end{split}
\mlabel{4.64}
\end{equation}
\begin{equation}
\begin{split}
&z^{-1}_0\delta\left(\frac{z_1-z_2}{z_0}\right)Y_{WV}^W(a,z_1) Y(b,z_2)c-   z^{-1}_0\delta\left(\frac{-z_2+z_1}{z_0}\right) Y_W(b,z_2)Y_{WV}^W(a,z_1)c\\
    &=z^{-1}_2\delta\left(\frac{z_1-z_0}{z_2}\right) Y_{WV}^W(Y_{WV}^W(a,z_0)b,z_2)c, \quad  b,c\in V, a\in W.
\end{split}
\mlabel{4.65}
\end{equation}
}
    In particular, $(W,Y_W)$ is a (left) module over the vertex Leibniz algebra $(V,Y)$ as in \mcite{LTW}, in view of \meqref{4.64}.
\end{df}

\begin{prop}\mlabel{prop4.21}
    Let $(V,\prec_z,\succ_z)$ be a dendriform vertex Leibniz algebra, and let $(V,Y)$ be the associated vertex Leibniz algebra, where $Y$ is given by \meqref{5.11}: $Y(a,z)b=a\prec_z b+a\succ_z b$. Let $W=V$, and define $Y_W$ and $Y_{WV}^W$ by
    \begin{equation}\mlabel{4.66}
    Y_W(a,z)b:=a\succ_z b,\quad Y_{WV}^W(b,z)a:=b\prec_z a,\quad \forall a\in V, b\in W.
    \end{equation}
    Then $(W,Y_W,Y_{WV}^W)$ is a bi-module over $(V,Y)$.
    \end{prop}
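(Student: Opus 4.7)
The plan is to observe that the three bi-module Jacobi identities \meqref{4.63}, \meqref{4.64} and \meqref{4.65} required by Definition~\mref{df4.21} translate, under the identifications \meqref{4.66}, exactly into the three dendriform Jacobi identities \meqref{4.52}, \meqref{4.53} and \meqref{4.54} already established in Theorem~\mref{thm4.10}. So the proof will amount to a substitution of definitions together with a careful dictionary between the two notations.

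First I would set up the dictionary. By \meqref{4.66}, $a \succ_z b = Y_W(a,z)b$ and $b \prec_z a = Y_{WV}^W(b,z)a$; by \meqref{5.11}, the combination $a \succ_z c + a \prec_z c$ equals $Y(a,z)c$. Therefore each composite arising on either side of \meqref{4.52}--\meqref{4.54} has a canonical rewriting: for instance, $(a \succ_{z_0} b)\prec_{z_2} c = Y_{WV}^W(Y_W(a,z_0)b, z_2)c$ and $(a \prec_{z_0}b)\prec_{z_2}c = Y_{WV}^W(Y_{WV}^W(a,z_0)b, z_2)c$, and analogously for the other nested expressions.

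Next I would substitute into each of the three dendriform Jacobi identities in turn. Under the dictionary, \meqref{4.53} reads $Y_W(a,z_1)Y_W(b,z_2)c$ and $Y_W(b,z_2)Y_W(a,z_1)c$ on the left, with $Y_W(Y(a,z_0)b, z_2)c$ on the right, which is precisely \meqref{4.64}. Likewise \meqref{4.52} rewrites to $Y_W(a,z_1)Y_{WV}^W(b,z_2)c$ and $Y_{WV}^W(b,z_2)Y(a,z_1)c$ on the left, with $Y_{WV}^W(Y_W(a,z_0)b, z_2)c$ on the right, giving \meqref{4.63}. Finally \meqref{4.54} rewrites to $Y_{WV}^W(a,z_1)Y(b,z_2)c$ and $Y_W(b,z_2)Y_{WV}^W(a,z_1)c$ on the left, with $Y_{WV}^W(Y_{WV}^W(a,z_0)b, z_2)c$ on the right, which is exactly \meqref{4.65}.

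Since all three axioms of Definition~\mref{df4.21} are thereby verified, $(W, Y_W, Y_{WV}^W)$ is a bi-module over the vertex Leibniz algebra $(V,Y)$. I do not anticipate any serious obstacle: the definitions \meqref{4.66} and the three Jacobi identities of Theorem~\mref{thm4.10} were arranged precisely so that the correspondence is one-for-one, and the entire argument is bookkeeping. The only step requiring mild care is recognizing that the sum $a\succ_{z_1}c + a\prec_{z_1}c$ appearing in \meqref{4.52} and the analogous sum in \meqref{4.54} must be recombined into $Y(a,z_1)c$ (and $Y(b,z_2)c$) using \meqref{5.11} before the translation can be read off cleanly.
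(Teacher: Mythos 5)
Your proposal is correct and follows exactly the paper's own argument: the paper likewise observes that under the identifications \meqref{4.66} the bi-module axioms \meqref{4.63}--\meqref{4.65} are term-by-term equivalent to the dendriform Jacobi identities \meqref{4.52}--\meqref{4.54}, which hold by Theorem~\mref{thm4.10} (stated as Corollary~\mref{coro4.13} in the paper). Your dictionary, including the recombination $a\succ_z c + a\prec_z c = Y(a,z)c$, matches the intended correspondence precisely.
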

\begin{proof}
We use the equivalent definition of dendriform vertex Leibniz algebra in Corollary~\mref{coro4.13}. By our definition \meqref{4.66}, it is clear that \meqref{4.63}, \meqref{4.64}, and \meqref{4.65} are equivalent to \meqref{4.52}, \meqref{4.53}, and \meqref{4.54}, respectively. Therefore,  $(W,Y_W,Y_{WV}^W)$ is a bi-module over $(V,Y)$ by Definition~\mref{df4.21}.
\end{proof}

Similar to Proposition~\mref{prop4.19}, we have the following characterization of the dendriform vertex Leibniz algebras.

\begin{prop}
    Let $V$ be a vector space, equipped with linear maps $\prec_z,\succ_z: V\ra \Hom(V,V((z)))$. Let $Y(a,z)b=a\prec_z b+a\succ_z b$ for all $a,b\in V$. Then $(V,\prec_z, \succ_z)$ forms a dendriform vertex Leibniz algebra if and only if
    \begin{enumerate}
        \item $(V,Y)$ forms a vertex Leibniz algebra, and
        \item $(V,\succ_z,\prec_z)$ forms a bi-module of the vertex Leibniz algebra $(V,Y)$.
        \end{enumerate}
    \end{prop}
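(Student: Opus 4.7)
The plan is to prove the characterization by leveraging the equivalent definition of dendriform vertex Leibniz algebras via the three Jacobi identities \meqref{4.52}, \meqref{4.53}, \meqref{4.54} given in Corollary~\mref{coro4.13}, together with the bi-module axioms \meqref{4.63}, \meqref{4.64}, \meqref{4.65} of Definition~\mref{df4.21}. The key observation is that under the identification $Y_W(a,z)b = a\succ_z b$ and $Y_{WV}^W(b,z)a = b\prec_z a$, together with $Y(a,z)b = a\prec_z b + a\succ_z b$, the three bi-module Jacobi identities are \emph{term-by-term} rewritings of the three dendriform Jacobi identities.

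For the forward direction, assume $(V,\prec_z,\succ_z)$ is a dendriform vertex Leibniz algebra. Then Theorem~\mref{thm4.5}(ii) immediately gives that $(V,Y)$ is a vertex Leibniz algebra, and Proposition~\mref{prop4.21} gives that $(V,\succ_z,\prec_z)$ is a bi-module over $(V,Y)$. So this direction is a direct citation of previously established results.

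For the converse direction, assume (i) and (ii). We wish to verify that $(V,\prec_z,\succ_z)$ satisfies the three Jacobi identities \meqref{4.52}--\meqref{4.54}; by Corollary~\mref{coro4.13} this will suffice. Identity \meqref{4.64} of the bi-module, rewritten via $Y_W(a,z)b = a\succ_z b$ and $Y(a,z_0)b = a\prec_{z_0}b + a\succ_{z_0}b$, is precisely \meqref{4.53}. Identity \meqref{4.63}, rewritten via $Y_W(a,z)b = a\succ_z b$ and $Y_{WV}^W(b,z)c = b\prec_z c$, is precisely \meqref{4.52}. Identity \meqref{4.65}, under the same substitutions, recovers \meqref{4.54}. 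Thus all three Jacobi identities hold and $(V,\prec_z,\succ_z)$ is a dendriform vertex Leibniz algebra by Corollary~\mref{coro4.13}.

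The only subtlety I anticipate is a bookkeeping one: one must check that the roles played by the arguments in $V$ versus $W$ in the bi-module axioms \meqref{4.63}--\meqref{4.65} match the roles of the arguments in \meqref{4.52}--\meqref{4.54} under the identification $W = V$. Since the bi-module axioms were designed precisely to encode these three Jacobi identities (compare the discussion in Remark~\mref{rk4.16} and the proof of Proposition~\mref{prop4.21}), this verification reduces to matching formal variables and substituting $Y(a,z_0)b = a\prec_{z_0}b+a\succ_{z_0}b$ on the right-hand sides. No further structural hypothesis (such as the existence of $D$) is needed, which is what makes this characterization strictly weaker than Proposition~\mref{prop4.19} for the vertex algebra case, as expected since vertex Leibniz algebras lack skew-symmetry.
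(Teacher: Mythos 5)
Your proposal is correct and is essentially the paper's own argument: the forward direction cites Theorem~\mref{thm4.5} and Proposition~\mref{prop4.21}, and the converse translates the bi-module axioms \meqref{4.63}--\meqref{4.65} into the Jacobi identities \meqref{4.52}--\meqref{4.54} and invokes Corollary~\mref{coro4.13}, with your stated correspondence of identities matching the one used in the proof of Proposition~\mref{prop4.21}. No substantive difference from the paper's proof.
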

\begin{proof}
    If $(V,\prec_z,\succ_z)$ forms a dendriform vertex Leibniz algebra, then (i) follows from Theorem~\mref{thm4.5}, (ii) follows from Proposition~\mref{prop4.21}. Conversely, by our assumption and Definition~\mref{df4.21}, equations \meqref{4.63}--\meqref{4.65} translates to equations \meqref{4.52}--\meqref{4.54}. Then by Corollary~\mref{coro4.13}, $(V,\prec_z,\succ_z)$ is a dendriform vertex Leibniz algebra.
    \end{proof}

\begin{remark}
    For dendriform field algebras, we can follow the same routine and introduce a notion of bi-modules over a field Leibniz algebra, which is coherent with equations \meqref{5.6}--\meqref{5.8}. Then the axioms of dendriform field algebras can also be characterized by the bi-module axioms.
\end{remark}

\noindent
{\bf Acknowledgments.}
This research is supported by
NSFC (11931009, 12271265, 12261131498), the Fundamental Research Funds for the Central Universities and Nankai Zhide Foundation.

\smallskip

\noindent
{\bf Declaration of interests. } The authors have no conflicts of interest to disclose.

\smallskip

\noindent
{\bf Data availability. } No new data were created or analyzed in this study.

\end{document}